%% file: main.tex
\documentclass[11pt,letterpaper]{amsart}

\usepackage{package}
\usepackage{command}
\usepackage{bibstyle}
\usepackage{setting}

\begin{document}

\input{section/Author}

\input{section/Abstract}

\maketitle

\input{section/Introduction}
\input{section/Hybrid_Periodic_Orbit}
\input{section/lemmas_orbits_existence}
\input{section/Rigidity_Standard}
\input{section/Injectivity}

\input{section/Acknowledgements}

\printbibliography

\end{document}

%% file: section/Author.tex
\title[Infinitesimal dynamical spectral rigidity]{Infinitesimal dynamical spectral rigidity of the Bunimovich stadium}

\author{Xianghui Shi}

\address{Xianghui Shi, Department of Mathematics, University of California, Los Angeles,
    520 Portola Plaza, Los Angeles, CA 90095, USA}
\email{xhshi@math.ucla.edu}

\subjclass[2020]{Primary: 37C83; Secondary: 58J53, 37J46}

\keywords{Bunimovich stadium, billiard, length spectrum, dynamical spectral rigidity,
    hybrid periodic orbit, linearized isospectral operator}

%% file: section/Abstract.tex
\begin{abstract}
	We prove the infinitesimal dynamical spectral rigidity of the circular Bunimovich stadium with sufficiently long flat edges, in the class of domains obtained by replacing its two circular arcs with strictly convex $C^{4}$ arcs and keeping its reflection symmetry across the axis parallel to the flat edges.
	That is, if a $C^{1}$ one-parameter family in this class passing through the stadium preserves the length spectrum, then its deformation function at the stadium vanishes identically.
	For flat edges of arbitrary length, the deformation functions of such families still span a finite-dimensional space.
\end{abstract}

%% file: section/Introduction.tex
\section{Introduction}
\label{sec:Introduction}

The \emph{length spectrum} of a planar domain $\Omega$, denoted by $\mathcal{L}(\Omega)$, is the set of lengths of all closed billiard trajectories in $\Omega$.
It is the dynamical counterpart of the Laplace spectrum: for a convex domain with smooth boundary, the singular support of the wave trace of the Dirichlet Laplacian is contained in the closure of $\pm \parentheses[\big]{ \mathcal{L}(\Omega) \cup \n \abs{\partial\Omega} } \cup \set{0}$~\cite{AnderssonMelrose1977, PetkovStoyanov1992}, with equality under generic non-degeneracy conditions~\cite{PetkovStoyanov1992} (see also the surveys~\cite{Zelditch2014survey, KaloshinSorrentino2022survey}).\footnote{
Koval and Vig~\cite{KovalVig2024} showed that this equality can almost fail, in the sense that for each $N$, every ellipse admits arbitrarily $C^{\infty}$-small deformations $\Omega$ whose wave trace is $C^{N}$ near some point of $\mathcal{L}(\Omega)$.
}
In parallel to Kac's famous question~\cite{Kac_1966}, ``Can one hear the shape of a drum?'', one asks whether the length spectrum $\mathcal{L}(\Omega)$ determines $\Omega$ up to isometry.
While Gordon, Webb, and Wolpert~\cite{GordonWebbWolpert1992} answered Kac's question in the negative for domains with corners, both the Laplace and length spectral inverse problems remain open for smooth convex domains.

This global question has a local counterpart.
Rather than compare two arbitrary domains, one restricts to nearby ones and asks (i)~whether two nearby non-isometric domains can have the same length spectrum, or (ii)~whether the length spectrum can remain unchanged under a deformation.
We are concerned with the latter.
More precisely, a $C^{1}$ one-parameter family $\set{\Omega_{\tau}}_{\abs{\tau} \leqslant 1}$ of domains is \emph{dynamically isospectral} if $\mathcal{L}(\Omega_{\tau}) = \mathcal{L}(\Omega_{0})$ for each $\tau \in [-1, 1]$.
A planar domain $\Omega$ is \emph{dynamically spectrally rigid} in a class $\mathcal{M}$ of domains if every dynamically isospectral family $\set{\Omega_{\tau}}_{\abs{\tau} \leqslant 1}$ in $\mathcal{M}$ with $\Omega_{0} = \Omega$ consists of domains isometric to $\Omega$.

The study of this rigidity begins naturally at the infinitesimal level.
While rigidity is a condition on the family for all $\tau$, its linearization at $\tau = 0$ concerns only the initial velocity of the deformation.
For a $C^{1}$ family $\set{\Omega_{\tau}}_{\abs{\tau} \leqslant 1}$ of domains with $\Omega_{0} = \Omega$, differentiating the boundary at $\tau = 0$ gives a vector field along $\partial\Omega$; its tangential part merely reparametrizes the boundary, while its normal component defines a function on $\partial\Omega$, the \emph{deformation function} of the family at $\Omega$.
A planar domain $\Omega$ is \emph{infinitesimally dynamically spectrally rigid} in $\mathcal{M}$ if the deformation function at $\Omega$ of every dynamically isospectral family $\set{\Omega_{\tau}}_{\abs{\tau} \leqslant 1}$ in $\mathcal{M}$ with $\Omega_{0} = \Omega$ vanishes identically.
Rigidity implies its infinitesimal counterpart; the converse requires the deformation function to vanish at every member of the family, not at $\Omega$ alone.

While dynamical spectral rigidity has been established for strictly convex domains close to a circle~\cite{MR3665005}, where Lazutkin coordinates~\cite{Lazutkin_1973} make the billiard map nearly integrable near the boundary, the Bunimovich stadium lies at the opposite extreme, with flat edges, curvature discontinuities at the junction points, and hyperbolic, ergodic dynamics~\cite{Bunimovich_1979}.
In the Riemannian setting, Guillemin and Kazhdan~\cite{GuilleminKazhdan1980} proved that negatively curved surfaces are spectrally rigid for the Laplace spectrum, and Guillarmou and Lefeuvre~\cite{GuillarmouLefeuvre2019} recently proved, in all dimensions, that the marked length spectrum locally determines a non-positively curved Anosov metric.
De Simoi, Kaloshin, and Wei~\cite{MR3665005} asked whether Guillemin and Kazhdan's theorem has a counterpart for hyperbolic billiards.
In the affirmative direction, we prove that the circular Bunimovich stadium with sufficiently long flat edges is infinitesimally dynamically spectrally rigid among symmetric deformations.

\subsection{Main results}
\label{sub:main results}

For $a > 0$, let $\Omega_{*}$ be the circular Bunimovich stadium whose two parallel flat edges have length $a$ and whose two arcs are unit semicircles.
Let $\mathcal{M}^{r}$ denote the class of domains obtained from $\Omega_{*}$ by replacing its two circular arcs with strictly convex $C^{r}$ arcs, keeping both flat edges fixed.
The two arcs are required to meet the fixed flat edges with $C^{1}$ tangency at the four junction points, and the domain to be convex and symmetric under reflection across the midline parallel to the two flat edges (precise statement in Definition~\ref{def:Mr}).
Both $\Omega_{*}$ and $\mathcal{M}^{r}$ depend on $a$, which we suppress from the notation.
Since all domains of $\mathcal{M}^{r}$ share the flat edges, and hence the junction points, a $C^{1}$ family $\set{\Omega_{\tau}}_{\abs{\tau} \leqslant 1}$ in $\mathcal{M}^{r}$ with $\Omega_{0} = \Omega_{*}$ moves only the two arcs, so its deformation function at $\Omega_{*}$ vanishes on the flat edges and is determined by its restriction to the two semicircles of $\Omega_{*}$.

\begin{theorem}[Infinitesimal dynamical spectral rigidity of the circular Bunimovich stadium]\label{thm:main rigidity}
	Set $r = 4$. There exists $a_{0} > 0$ such that for each $a \geqslant a_{0}$, the deformation function at $\Omega_{*}$ of every dynamically isospectral $C^{1}$ family $\set{\Omega_{\tau}}_{\abs{\tau} \leqslant 1}$ in $\mathcal{M}^{r}$ with $\Omega_{0} = \Omega_{*}$ vanishes identically.
\end{theorem}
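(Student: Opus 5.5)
Let $n$ denote the deformation function at $\Omega_{*}$, supported on the two semicircles. By the reflection symmetry across the midline, $n$ on each arc is even with respect to that arc's symmetry axis. The plan is the standard linearized isospectral argument (as in De Simoi--Kaloshin--Wei), built on the first variation formula for lengths of periodic orbits.

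\textbf{Linearized constraints.} If $\gamma$ is a non-degenerate (here hyperbolic) periodic orbit of $\Omega_{*}$ with bounce points $x_1,\dots,x_q$ and angles of reflection $\varphi_j$, then $\gamma$ persists as a $C^{1}$ family $\gamma_\tau$. Its length $L(\gamma_\tau)$ is $C^{1}$ in $\tau$, with
\[ \frac{d}{d\tau}\Big|_{\tau=0} L(\gamma_\tau) = 2\sum_{j} n(x_j)\sin\varphi_j. \]
The length spectrum is a countable, hence totally disconnected, set. So whenever $L(\gamma_\tau)\in\mathcal L(\Omega_*)$ for all $\tau$, the function $L(\gamma_\tau)$ is constant, and the derivative above vanishes. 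This gives the \emph{linearized isospectral operator} $\mathcal T$, sending $n$ to the sequence $\bigl(\sum_j n(x_j)\sin\varphi_j\bigr)_\gamma$ indexed by a chosen family of orbits, and we get $\mathcal T n = 0$.

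\textbf{Choice of orbits.} I would use hybrid periodic orbits. Such an orbit bounces between the two flat edges many times, then makes a controlled sequence of reflections on one semicircle near a prescribed point $e^{i\theta}$, then crosses to the other arc. Long flat edges ($a\ge a_0$) make these orbits strongly hyperbolic and make their bounce points on the arcs localize. For a family $\gamma_{k,\theta}$ one hopes for
\[ \mathcal T_{k} n = c_k\, n(\theta) + (\text{terms at points accumulating at the junctions}) + O(a^{-1})\cdot\text{error}. \]
Here the localized terms correspond to nearly tangential reflections close to the junction points. The existence lemmas of the previous section, giving orbits with prescribed combinatorics and asymptotics, are what I would invoke. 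Normalizing by $\sin\varphi$ and rescaling, $\mathcal T$ becomes $\mathrm{Id}+K$ on a suitable weighted space of even functions on the semicircle, with $K$ compact. The $C^{4}$ regularity enters here: it controls $n$ and the Taylor expansion of the orbit positions, so that the error terms are summable.

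\textbf{Finite dimensionality and injectivity.} Since $\mathcal T=\mathrm{Id}+K$ is Fredholm, its kernel is finite dimensional for every $a$, which matches the abstract. For $a\ge a_0$, I would show $\|K\|<1$, using that the coupling between different bounce points decays like a negative power of $a$. Then $\mathrm{Id}+K$ is injective and $n\equiv 0$.

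\textbf{Main obstacle.} The hard part is the junction points. There the curvature jumps, the orbit asymptotics degenerate, and the contributions of near-junction bounces need not be small uniformly. I would first try to isolate them by choosing orbits whose boundary contributions near the junctions form a convergent geometric series. Then I would use the boundary condition $n=0$ at the junctions, forced by $C^{1}$ tangency with the fixed edges, to kill the leading term. Only after this would I attempt the uniform smallness estimate for $K$.
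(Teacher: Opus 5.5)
Your linearization step is sound for orbits that avoid the junctions and matches the paper. The paper proves that $\mathcal L(\Omega_*)$ has measure zero via Sard's theorem for the real-analytic length functions; you assert countability without proof. The gap is the central claim that $\mathcal T$ takes the form $\mathrm{Id}+K$ with $\|K\|<1$ for large $a$ because the coupling decays like a negative power of $a$. This is never derived, and the mechanism is false.

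The existence results you want to invoke concern the orbits $\Gamma^{p_1,p_2}$, which never touch the flat edges. For these, letting $a\to+\infty$ drives the collision angles to $\pi/(2p_i)$, so the bounce points become equally spaced over the whole arc, as in the circular billiard. They do not localize near a prescribed $\theta$: several reflections close to one point of a circle force small angles, hence small weights $\sin\varphi$. You exhibit no family whose functional is $c_k\mathbf n(\theta)$ plus a contraction.

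In the Fourier basis $\cos(2js)$, the limiting functional of $\Gamma^{q,mq}$ on mode $j$ is $\pm p_i\sin\frac{\pi}{2p_i}$ if $p_i\mid j$ and $0$ otherwise. After the reduction $\widehat{\mathbf n}_{1,j}=-\widehat{\mathbf n}_{2,j}$ obtained from the circular orbits $\Gamma^{q,q}$, row $q$ therefore couples mode $q$ to the modes $sq$ with coefficients $\pm P(s)$ of size up to $256/27$, independent of $a$. The limiting off-diagonal operator $K^{\infty}$ has norm of order one (the paper certifies $\leqslant 0.7323$). It falls below $1$ only because of three ingredients: the specific combination of $\Gamma^{q,2q},\Gamma^{q,3q},\Gamma^{q,4q}$ with weights satisfying $\sum_m x_m^{(q)}(1-m^{-2})=0$, which cancels the leading angle correction; the arithmetic bounds on $P$; and the weight $(q/j)^{\gamma}$ with $\gamma\in(3,4)$. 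The last is where $C^{4}$ and the tangency condition $\mathbf n_i'(\pm\frac{\pi}{2})=0$ enter, through $\widehat{\mathbf n}_{i,j}=\mathcal O(j^{-4})$. Natural alternatives such as $\Gamma^{q,q+1}$, or only two families, destroy diagonal dominance, so this construction is the missing idea rather than a routine estimate.

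Second, even granting a contraction on the tail, the Neumann series only expresses the modes $j\geqslant q_0$ in terms of the low modes. The rows are indexed by $q\geqslant q_0=2$, so $\widehat{\mathbf n}_{1,0}$ and $\widehat{\mathbf n}_{1,1}$ remain free, and a Fredholm-plus-small-norm argument says nothing about them. The paper closes this with the short orbits $\Gamma^{1,2}$ and $\Gamma^{1,3}$. Substituting the tail relation gives a $2\times 2$ system whose limiting matrix $\widetilde{A}^{\flat}+E^{\infty}$ is explicit and independent of $a$, with $\sigma_{\min}\geqslant 0.055-0.046>0$. Large $a$ is used only to keep the actual matrix within $\mathcal O(c_3)$, $c_3=\pi^2/(8a)$, of this limit.

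Finally, your treatment of the junctions, through orbits whose bounces accumulate there, is the Chen--Kaloshin--Zhang route. It yields only local information at the junction points and needs real-analytic arcs to propagate it, which is unavailable in $C^{4}$. The paper avoids this entirely by using only regular orbits, which never hit a junction, and letting the junction conditions act globally through Fourier decay.
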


For arbitrary $a > 0$, we have the following finite-dimensionality result.
The bound on the dimension may depend on $a$.

\begin{theorem}[Finite-dimensionality of isospectral deformations]\label{thm:intro finite dimensional}
	Set $r = 4$. For each $a > 0$, the deformation functions at $\Omega_{*}$ of all dynamically isospectral $C^{1}$ families $\set{\Omega_{\tau}}_{\abs{\tau} \leqslant 1}$ in $\mathcal{M}^{r}$ with $\Omega_{0} = \Omega_{*}$ span a finite-dimensional space.
\end{theorem}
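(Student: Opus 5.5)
The plan follows the standard linearized isospectral operator strategy of De Simoi, Kaloshin and Wei, adapted to the hyperbolic setting through hybrid periodic orbits.

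\textbf{Step 1: reduction to a linear condition on $n$.} Let $n$ be the deformation function at $\Omega_{*}$. It is supported on the two semicircles, and by the symmetry of $\mathcal{M}^{r}$ it is even under the reflection. For each nondegenerate periodic orbit $\gamma$ of $\Omega_{*}$ with reflection points $x_{1},\dots,x_{q}$ and angles of reflection $\varphi_{j}$, the orbit persists as $\gamma_{\tau}$ along the family. First-variation of length gives
\[
\frac{d}{d\tau}\Big|_{\tau=0} \abs{\gamma_{\tau}} = 2\sum_{j=1}^{q} n(x_{j})\cos\varphi_{j}.
\]
The length spectrum is constant, and a length varying continuously must lie in it. If the relevant lengths are isolated in $\mathcal{L}(\Omega_{*})$, or if the set of lengths is countable or of measure zero near them, then $\abs{\gamma_{\tau}}$ is constant. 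Hence every such orbit produces a linear constraint $\ell_{\gamma}(n)=0$. In short, $n$ lies in the kernel of the linearized isospectral operator $\mathcal{T}n = (\ell_{\gamma}(n))_{\gamma}$.

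\textbf{Step 2: choice of orbits.} I would use the hybrid periodic orbits built in the earlier sections. These are the orbits that bounce $k$ times between the flat edges, or pass near the bouncing-ball family, and then hit the arcs at prescribed points with prescribed angles; the orbit-existence lemmas give them. Their advantage is that they are hyperbolic and nondegenerate, so they persist in $C^{1}$ families. Their lengths are asymptotically $k a + c + O(\lambda^{-k})$, which gives an explicit expansion. Combining two families (symmetric, and reflected across the midline), the constraints become, up to exponentially small errors,
\[
n(x)\cos\varphi + n(\bar{x})\cos\bar\varphi + E_{k}(n) = 0,
\]
where the point $x$ ranges over a dense set of each arc and $\bar{x}$ is determined by $x$. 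This is to be compared with the Lazutkin-coordinate expansion in the near-circular case.

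\textbf{Step 3: injectivity.} Rewrite the system in Step 2 as $(I + K_{a})\,\tilde n = 0$ on a suitable Banach space, say $C^{0}$ or a weighted $L^{\infty}$ on the arc. Here $\tilde n$ incorporates the symmetry, and the main part is invertible, or at least a fixed injective operator. The error operator $K_{a}$ collects the remaining orbit contributions and the exponential tails. The $C^{4}$ regularity is what controls the second variation of the orbits, and hence $\tau$-uniformity.

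I would show two things about $K_{a}$:
\begin{itemize}
\item[(i)] $K_{a}$ is compact for every $a$. Its kernels are smooth in $x$, because they come from tails of orbits that accumulate on the bouncing-ball orbit. This gives Theorem~\ref{thm:intro finite dimensional} by Fredholm theory.
\item[(ii)] $\norm{K_{a}} \to 0$ as $a\to\infty$. The geometric factors such as $1/a$ and $\lambda^{-1}$ decay as the flat edges lengthen. Then a Neumann series makes $I+K_{a}$ invertible, so $n\equiv 0$.
\end{itemize}

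\textbf{Main obstacle.} I expect the hard part to be the operator estimate in Step 3: obtaining a main term that is genuinely injective. Even reflections create pairing and cancellation, which might make $n(x)\cos\varphi + n(\bar x)\cos\bar\varphi$ degenerate. The norm bound must also hold uniformly in the number of bounces. My first attempt would be to use orbits with one reflection on each arc near the perpendicular direction. For these the main term is $2n(x)$ exactly, so all couplings are pushed into $K_{a}$, and $K_{a}$ can be bounded by $C/a$.
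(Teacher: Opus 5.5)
There is a genuine gap at the heart of Step~3. You never exhibit constraints whose leading part is invertible, or even Fredholm, on the space where the kernel actually lives, and the constraints you sketch degenerate exactly there.

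A periodic orbit that reaches one arc must cross the rectangle and hit the other arc, because reflections in the flat edges preserve the horizontal velocity. So every constraint couples $\mathbf{n}_1$ and $\mathbf{n}_2$. For orbits meeting each arc once, your leading term is a two-point relation $\alpha\,\mathbf{n}_1(x)+\beta\,\mathbf{n}_2(y)=0$.

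\emph{Left--right symmetric orbits.} Whenever an orbit is symmetric under the left--right reflection ($y$ the mirror of $x$ with $\alpha=\beta$, or the circular orbits $\Gamma^{q,q}$), the constraint only sees $\mathbf{n}_1+\mathbf{n}_2$. Such orbits do force $\widehat{\mathbf{n}}_{1,j}=-\widehat{\mathbf{n}}_{2,j}$. But on the remaining infinite-dimensional antisymmetric space your main term vanishes identically. The ``$2n(x)$'' you hope for arises only if $\mathbf{n}_2$ is identified with $+\mathbf{n}_1$, which is the wrong sign.

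\emph{Asymmetric orbits.} These give transfer relations $\alpha\,\mathbf{n}_1(x)=\beta\,\mathbf{n}_1(y)$, with no diagonal part to invert.

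Either way, the system is not of the form $I+K_a$ with $K_a$ compact, and Fredholm theory gives nothing. Several supporting claims are also unfounded:
\begin{itemize}
\item Orbits with many flat-edge reflections live near the bouncing-ball family, which is parabolic. So there are no $\mathcal{O}(\lambda^{-k})$ tails.
\item The existence of orbits through a dense set of arc points is asserted, not shown.
\item The compactness of $K_a$ is asserted, not shown.
\item The nondegeneracy needed for persistence is assumed; the paper proves it via the trace of the monodromy matrix.
\item Your claim $\lVert K_a\rVert\to0$ fails for the orbits that do work. There the limiting couplings between Fourier modes $q$ and $sq$ stay of order one as $a\to\infty$.
\end{itemize}

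The missing idea is to break the left--right symmetry and test Fourier modes with orbits spread over the whole arcs. The orbits $\Gamma^{q,mq}$ with $m\geqslant 2$ avoid the flat edges. They have $q$ equally spaced collisions on the left arc and $mq$ on the right, with angles $\pi/(2q)+\mathcal{O}(q^{-3})$ and $\pi/(2mq)+\mathcal{O}(q^{-3})$. On the antisymmetric space, their functional has the following structure:
\begin{itemize}
\item The coefficient at the mode $\cos(2qs)$ is $\pm\pi/2+\mathcal{O}(q^{-2})$. Only the left arc sees this mode, since $mq\nmid q$.
\item At leading order it couples only to the multiples $sq$ with $s\geqslant 2$.
\item The remaining $\mathcal{O}(q^{-2})$ corrections have leading part proportional to $1-m^{-2}$.
\end{itemize}
Combining $m=2,3,4$ with weights that annihilate that part gives a diagonally dominant system on sequences with $j^{\gamma}\lvert\widehat{\mathbf{n}}_j\rvert\to0$, for $\gamma\in(3,4)$. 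This is where $C^4$ enters: through the decay $\widehat{\mathbf{n}}_j=\mathcal{O}(j^{-4})$, not through second variations of orbits. A Neumann series then determines all modes $j\geqslant q_0$ from the first $q_0$, so the kernel has dimension at most $q_0$ for every $a>0$.
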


Theorem~\ref{thm:main rigidity} establishes infinitesimal rigidity at $\Omega_{*}$.
This naturally raises the question of whether local or global rigidity holds in the deformation class $\mathcal{M}^{r}$.
Let $\mathcal{M}^{r}_{\delta}$ denote the subclass of $\mathcal{M}^{r}$ formed by the domains whose arcs lie within $C^{r}$ distance $\delta$ of those of $\Omega_{*}$.

\begin{question}\label{q:full rigidity}
	\leavevmode
	\begin{enumerate}[label=\textup{(\roman*)}, leftmargin=*, topsep=2pt, itemsep=2pt, parsep=0pt]
		\item Is the circular Bunimovich stadium $\Omega_{*}$ dynamically spectrally rigid in $\mathcal{M}^{r}_{\delta}$ for some $\delta > 0$?
		\item Is every domain in $\mathcal{M}^{r}_{\delta}$ dynamically spectrally rigid in $\mathcal{M}^{r}_{\delta}$ for some $\delta > 0$?
		\item Is every domain in $\mathcal{M}^{r}$ dynamically spectrally rigid in $\mathcal{M}^{r}$?
	\end{enumerate}
\end{question}

\subsection{Related results}
\label{sub:related results}

De Simoi, Kaloshin, and Wei~\cite{MR3665005} proved dynamical spectral rigidity for $\z_2$-symmetric strictly convex domains that are sufficiently smooth and sufficiently close to a circle. 
Recently, Fierobe, Kaloshin, and Sorrentino~\cite{FierobeKaloshinSorrentino2025} extended this to domains close to an ellipse, under dihedral symmetry.
In the integrable setting, Huang and Kaloshin~\cite{HuangKaloshin2019} showed that the deformations of a rationally integrable strictly convex table preserving integrability near the boundary are tangent to a finite-dimensional space; Theorem~\ref{thm:intro finite dimensional} is the analogue of this for the stadium.
The same linearization underlies the perturbative results on the Birkhoff conjecture~\cite{AvilaDeSimoiKaloshin2016, HuangKaloshinSorrentino2018, KaloshinSorrentino2018, Koval2026}.

On the Laplace side, Hezari and Zelditch~\cite{HezariZelditch2012} proved that the ellipse is infinitesimally spectrally rigid among $C^{\infty}$ domains with its two reflection symmetries, and later~\cite{HezariZelditch2022} showed that ellipses of small eccentricity are determined by their Dirichlet spectrum among all smooth domains.

For billiards that are not strictly convex, much less is known.
De Simoi, Kaloshin, and Leguil \cite{DeSimoiKaloshinLeguil2023} showed that the marked length spectrum determines the geometry of a billiard obtained by removing from the plane finitely many strictly convex analytic obstacles satisfying a non-eclipse condition, under symmetry and genericity assumptions.
For three obstacles without symmetry, Osterman~\cite{Osterman2023} showed that two such systems have the same marked length spectrum if and only if their collision maps are analytically conjugate near a homoclinic orbit.
For stadia, Chen, Kaloshin, and Zhang~\cite{ChenKaloshinZhang2023} proved dynamical spectral rigidity in two classes of piecewise analytic domains, one of squash-type stadia and one of stadia whose two flat edges are the opposite sides of a fixed rectangle.
The second class, on which no symmetry is imposed, is the analytic analogue of the class $\mathcal{M}^{r}$ of this article, and $\Omega_{*}$ belongs to it.
For that class they prove that the deformation function of a dynamically isospectral family vanishes to infinite order at the four junction points, and the analyticity of the arcs then forces it to vanish identically.
In both open dispersing billiards~\cite{DeSimoiKaloshinLeguil2023, Osterman2023} and stadia~\cite{ChenKaloshinZhang2023}, real-analyticity was essential to establish rigidity from local information.
Theorem~\ref{thm:main rigidity} provides the first infinitesimal dynamical spectral rigidity result for a hyperbolic billiard in the finitely smooth category.

\subsection{Strategy and organization}
\label{sub:strategy}

As in~\cite{MR3665005}, we construct a linearized isospectral operator and reduce the rigidity problem to showing that its kernel is trivial.
Write $\Gamma^{p_1, p_2}$ for the \emph{hybrid periodic orbit} colliding $p_1$ times with one arc, $p_2$ times with the other, and never with the flat edges (Section~\ref{sec:Hybrid Periodic Orbit on Bunimovich Stadia}).
Each such orbit yields a linear functional, proportional to the first variation of its length, that annihilates the deformation function of every dynamically isospectral family (Lemma~\ref{lem:isospectral orbit functionals}); the linearized isospectral operator $\mathcal{J}$ collects these functionals over all pairs $(p_1, p_2)$, so the deformation function lies in its kernel.

To analyze the kernel, we expand the deformation function in Fourier series on the two arcs and the functionals in the Fourier basis.
The circular orbits $\Gamma^{q, q}$ force the deformation function to be anti-symmetric on the two arcs (Lemma~\ref{lem:standard stadium circular orbits}), reducing the unknowns to a single sequence $(\widehat{\mathbf{n}}_{j})_{j \geqslant 0}$ of Fourier coefficients.
Since the collision angles of $\Gamma^{q, mq}$ are asymptotic to $\pi/(2q)$ and $\pi/(2mq)$ (Section~\ref{sec:Hybrid Periodic Orbit on Bunimovich Stadia}), the Fourier coefficients of the corresponding functional are explicit up to corrections in powers of $1/q$.
Combining the functionals associated with $\Gamma^{q, 2q}$, $\Gamma^{q, 3q}$, and $\Gamma^{q, 4q}$ with weights chosen to cancel the leading correction, we obtain for each large $q$ an equation whose coefficient at the mode $q$ tends to $1$, while the other coefficients, weighted by $(q/j)^{\gamma}$ for a suitable exponent $\gamma \in (3, 4)$ (this is where $C^{4}$ smoothness is used), sum to less than $1$.
Hence, for every $a > 0$, the system is invertible on all sufficiently high Fourier modes, and every deformation in the kernel is determined by finitely many low-frequency modes (Theorem~\ref{thm:intro finite dimensional}).
To go further and prove that the kernel is trivial, we assume that the flat edges are sufficiently long in order to carry out a perturbation near infinity.
As their length tends to infinity, the collisions of each hybrid orbit become equally spaced along the arcs as in a circular billiard; perturbing off this explicit limit eliminates the remaining low-frequency modes, completing the proof of Theorem~\ref{thm:main rigidity}.

It is worth noting that the orbits $\Gamma^{q, q}$ and $\Gamma^{q, q + 1}$, the analogues on the stadium of the near-circular orbits used in~\cite{MR3665005}, do not work in our setting.
The orbits $\Gamma^{q, q}$ act identically on the two arcs and only yield the reduction noted above.
Based on the reduction, the functional corresponding to $\Gamma^{q, q + 1}$ has asymptotically equal coefficients at the Fourier modes $q$ and $q + 1$; since $(q / (q + 1))^{\gamma} \to 1$ for every $\gamma$, diagonal dominance fails, and the approach of~\cite{MR3665005} breaks down.
Instead, we consider orbits $\Gamma^{q, mq}$ with $m \geqslant 2$, for which the next leading mode off the diagonal is at $2 q$ and the factor $2^{-\gamma}$ restores diagonal dominance.

\smallskip

We now describe the structure of the article.
Section~\ref{sec:Hybrid Periodic Orbit on Bunimovich Stadia} constructs the hybrid periodic orbits and estimates their collision angles.
Section~\ref{sec:linearized isospectral operator} investigates the linearized isospectral operator and proves Theorem~\ref{thm:intro finite dimensional}.
Section~\ref{sec:triviality of the kernel} proves the triviality of the kernel for sufficiently large $a$, completing the proof of Theorem~\ref{thm:main rigidity}.

%% file: section/Hybrid_Periodic_Orbit.tex
\section{Hybrid Periodic Orbit on Bunimovich Stadia}
\label{sec:Hybrid Periodic Orbit on Bunimovich Stadia}

\subsection{Construction of the hybrid periodic orbits}
\label{sub:The construction of hybrid periodic orbit}

For $a > 0$, let $\Omega_{*} \subset \real^{2}$ be the circular Bunimovich stadium whose two flat edges are horizontal segments of length $a$, connected by two unit semicircular arcs $\Gamma_{1}$ (the left arc) and $\Gamma_{2}$ (the right arc).
The domain $\Omega_{*}$ is symmetric under reflection across the horizontal midline parallel to the two flat edges.

For two positive integers $p_1$ and $p_2$, a periodic billiard orbit $\Gamma^{p_1, p_2}$ is called a \emph{$(p_1, p_2)$-type hybrid periodic orbit} if:
\begin{enumerate}
    \smallskip
    \item The orbit has period $p_1 + p_2$, consisting of $p_1$ bounces on $\Gamma_{1}$ followed by $p_2$ bounces on $\Gamma_{2}$, and its segments never touch the flat edges;

    \smallskip
    \item The orbit is symmetric under reflection across the horizontal midline. Concretely, for the $p_1$ collisions on the left arc $\Gamma_{1}$:
    \begin{itemize}
        \item if $p_1$ is odd, the $\frac{p_1+1}{2}$-th collision point lies on the symmetry axis;
        \item if $p_1$ is even, the chord connecting the $\frac{p_1}{2}$-th and $(\frac{p_1}{2}+1)$-th collision points is perpendicular to the symmetry axis and bisected by it.
    \end{itemize}
    The same reflection symmetry holds for the $p_2$ collisions on the right arc $\Gamma_{2}$.
\end{enumerate}
The arcs meet the flat edges at four \emph{junction points}, and we call $\Gamma^{p_1, p_2}$ \emph{regular} if none of its collision points is a junction point.
The existence of such an orbit is justified in Lemma~\ref{lem:Taylor expansion of collision angle} below for the pairs $(q, q)$ with $q \in \n$ and, for each fixed $m \in \n$, the pairs $(q, mq)$ with $q$ sufficiently large.
That lemma also shows that a pair $\juxtapose{p_1}{p_2}$ carries at most one such orbit, so that the notation $\Gamma^{p_1, p_2}$ and the collision angles introduced below are unambiguous.

\begin{figure}[hpt!]
    \centering
        \includegraphics[width=0.8\linewidth]{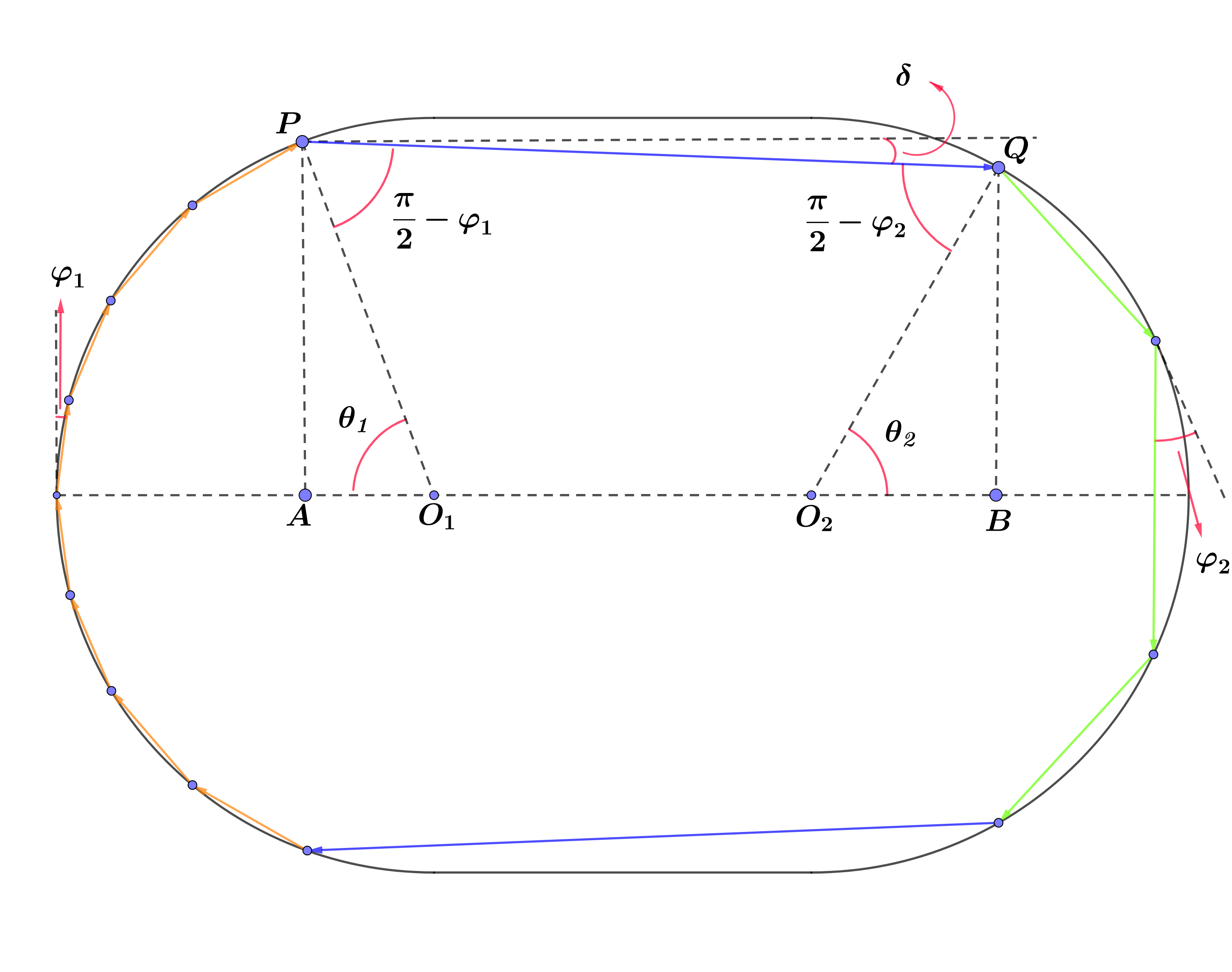}
    \caption{The $(p_1, p_2)$-type hybrid periodic orbit}
    \label{fig: 2q2h}
\end{figure}

We parametrize the arcs $\Gamma_1$ and $\Gamma_2$ by arclength parameters $s_1, s_2 \in [-\frac{\pi}{2}, \, \frac{\pi}{2}]$, normalized so that $s_1 = 0$ and $s_2 = 0$ are the intersections of the arcs with the horizontal symmetry axis.
In these coordinates, the collision points of $\Gamma^{p_1, p_2}$ on the left arc are
\begin{equation}\label{eq: Gamma qi s varphi}  
    \parentheses[\big]{ s_1^{p_1, p_2}(k), \varphi_1^{p_1, p_2}(k) } = \parentheses[\big]{  (2k-p_1-1)\varphi_1^{p_1, p_2}, \varphi_1^{p_1, p_2} } \quad  \text{for }  1 \leqslant k \leqslant p_1,
\end{equation}
where $\varphi_1^{p_1, p_2}$ is the angle the outgoing trajectory forms with the positively oriented tangent to the left arc.
Similarly, the collision points on the right arc are
\begin{equation}\label{eq: Gamma qi s varphi-}
    \parentheses[\big]{ s_2^{p_1, p_2}(k), \, \varphi_2^{p_1, p_2}(k) } = \parentheses[\big]{ (2k - p_2 - 1)\, \varphi_2^{p_1, p_2}, \, \varphi_2^{p_1, p_2} } \quad \text{for } 1 \leqslant k \leqslant p_2,
\end{equation}
with $\varphi_2^{p_1, p_2}$ defined analogously.

\subsection{Asymptotics of the collision angles}
\label{sub:Asymptotics of the collision angles}

The following lemma establishes the existence of $\Gamma^{q, mq}$ for each fixed $m \in \n$ and all sufficiently large $q$, and gives quantitative asymptotic estimates for the collision angles.

\begin{lemma}  \label{lem:Taylor expansion of collision angle}
    For each pair $\juxtapose{p_1}{p_2} \in \n$, the circular stadium $\Omega_{*}$ carries at most one $(p_1, p_2)$-type hybrid periodic orbit; in particular, the collision angles $\varphi_1^{p_1, p_2}$ and $\varphi_2^{p_1, p_2}$ are well defined.
    For each $m \in \n$, there exists an integer $q_{\ast} = q_{\ast}(m, a) \in \n$ such that for each integer $q \geqslant q_{\ast}$, the hybrid periodic orbit $\Gamma^{q, mq}$ exists and is regular.
    Moreover, for all sufficiently large $\juxtapose{p_{1}}{p_{2}} \in \n$ such that the hybrid periodic orbit $\Gamma^{p_1, p_2}$ exists, we have
    \begin{equation} \label{eq:Taylor expansion of collision angle}
        \begin{split}
            & \varphi_1^{p_1, p_2} = \frac{\pi}{2 p_1} + \frac{c_{3}}{p_1} \parentheses[\bigg]{ \frac{1}{p_2^2} - \frac{1}{p_1^2} } - \frac{c_5}{p_1} \parentheses[\bigg]{ \frac{1}{p_2^4}-\frac{1}{p_1^4} } + \mathcal{O}\parentheses[\bigg]{ \frac{1}{p_1} \parentheses[\bigg]{ \frac{1}{p_1^5} + \frac{1}{p_2^5} } }, \\
            & \varphi_2^{p_1, p_2} = \frac{\pi}{2 p_2} - \frac{c_{3}}{p_2} \parentheses[\bigg]{ \frac{1}{p_2^2} - \frac{1}{p_1^2} } + \frac{c_5}{p_2} \parentheses[\bigg]{ \frac{1}{p_2^4}-\frac{1}{p_1^4} } + \mathcal{O}\parentheses[\bigg]{ \frac{1}{p_2} \parentheses[\bigg]{ \frac{1}{p_1^5} + \frac{1}{p_2^5} } }, 
        \end{split}
    \end{equation}
    where $c_3 \define \frac{\pi^{2}}{8a}$ and $c_5 \define \frac{\pi^3}{16}\parentheses[\big]{ \frac{1}{a^2} + \frac{\pi}{24 a} }$.
    In particular, for each $q \in \n$ the hybrid periodic orbit $\Gamma^{q, q}$ exists and is regular, and $\varphi_1^{q, q} = \varphi_2^{q, q} = \frac{\pi}{2q}$.
\end{lemma}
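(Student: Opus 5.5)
The plan is to reduce the existence of $\Gamma^{p_1,p_2}$ to a scalar equation in one unknown, by using the rigid structure of billiards in a circle on each arc. Place the centre of $\Gamma_1$ at $(0,0)$ and the centre of $\Gamma_2$ at $(a,0)$. On a unit circle, consecutive bounces with angle $\varphi$ advance the collision point by central angle $2\varphi$, which is exactly the content of \eqref{eq: Gamma qi s varphi} and \eqref{eq: Gamma qi s varphi-}. A chord leaving with angle $\varphi$ lies at distance $\cos\varphi$ from the centre, and its unit normal points towards the midpoint of the arc it cuts off.

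\emph{Step 1: the transition equations.} Consider the upper transition chord from the last left collision $s_1=(p_1-1)\varphi_1$ to the first right collision. Its normal direction is read off in two ways. From the left circle, it is the bisecting direction of the "next" chord in the circular chain. From the right circle, it comes from the mirror-image chain. Equating the two gives the angle-sum relation
\[
p_1\varphi_1+p_2\varphi_2=\pi .
\]
Since this chord is tangent to the circle of radius $\cos\varphi_1$ about $(0,0)$ and to the circle of radius $\cos\varphi_2$ about $(a,0)$, with both centres on the same side, projecting $(a,0)$ onto its normal gives the distance equation. Writing $x=p_1\varphi_1$, it reads
\[
F(x)\;:=\;\cos\frac{\pi-x}{p_2}-\cos\frac{x}{p_1}+a\cos x=0 .
\]
The sign conventions still need to be fixed carefully. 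The lower transition chord is then automatically the mirror image, by the symmetry requirement in the definition. Conversely, every solution $x$ for which the collision points $(p_1-1)\varphi_1$ and $(p_2-1)\varphi_2$ lie in $(-\pi/2,\pi/2)$ and no chord meets the flat edges produces a hybrid orbit.

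\emph{Step 2: uniqueness and the circular case.} The admissibility constraints $x-\varphi_1\le\pi/2$ and $\pi-x-\varphi_2\le\pi/2$ confine $x$ to a window $\abs{x-\pi/2}\le\max(\varphi_1,\varphi_2)$ around $\pi/2$. On this window, $F'(x)=-a\sin x+\mathcal O(1/p_1^2+1/p_2^2)$, which is bounded away from $0$. Hence $F$ is strictly monotone there and has at most one zero, which gives uniqueness. I expect making this window argument airtight for small $p_i$ (and for all $a>0$) to be the fiddliest point. If monotonicity on the window fails for small $p_i$, I would instead use a direct convexity argument on the explicit trigonometric function. For $p_1=p_2=q$, $x=\pi/2$ is visibly a root, so $\varphi_1=\varphi_2=\pi/(2q)$. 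This orbit is regular because the extreme collision points sit at $\pm(\pi/2-\pi/(2q))$, and its transition chords are horizontal at height $\cos(\pi/(2q))<1$, so they miss the flat edges.

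\emph{Step 3: asymptotics and existence for $(q,mq)$.} Write $x=\pi/2+\delta$. Taylor expanding $F$ gives
\[
a\sin\delta=\cos\frac{x}{p_1}-\cos\frac{\pi-x}{p_2}=\tfrac12\bigl(\varphi_2^2-\varphi_1^2\bigr)-\tfrac1{24}\bigl(\varphi_2^4-\varphi_1^4\bigr)+\cdots .
\]
To leading order this yields $\delta\approx\frac{\pi^2}{8a}\bigl(p_2^{-2}-p_1^{-2}\bigr)=c_3\bigl(p_2^{-2}-p_1^{-2}\bigr)$. Iterating once more, with $\varphi_i=(\pi/2\pm\delta)/p_i$ fed back in and the cubic term of $\sin\delta$ included, produces the $c_5$ term and an $\mathcal O(p_1^{-5}+p_2^{-5})$ remainder. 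Dividing by $p_1$ and $p_2$ gives \eqref{eq:Taylor expansion of collision angle}.

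For existence when $p_1=q$ and $p_2=mq$, note that $\delta=\mathcal O(q^{-2})$ while $\varphi_i\gtrsim 1/(mq)$. Applying the intermediate value theorem to $F$ on $[\pi/2-\varphi_2/2,\;\pi/2+\varphi_2/2]$ produces the root. The extreme collision points then satisfy $x-\varphi_1<\pi/2$ and $\pi-x-\varphi_2<\pi/2$, so no collision point is a junction point. Finally, the transition chord has endpoint heights $\sin(x-\varphi_1)<1$ and $\sin(\pi-x-\varphi_2)<1$ and is a segment between them, so it stays strictly below the flat edge $y=1$. By symmetry it also stays above $y=-1$. This holds once $q\ge q_*(m,a)$.
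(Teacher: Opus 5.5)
Your scheme is the paper's: one scalar equation for the crossing chord, monotonicity for uniqueness, the intermediate value theorem for $\Gamma^{q,mq}$, and Taylor expansion for the angles. Your Step~3 equation $a\sin\delta=\cos\varphi_1-\cos\varphi_2$ (with $x=\pi/2+\delta$) is exactly the paper's $H=F\cos\delta=0$, since there $\sin(\theta_1-\delta)=\cos\varphi_1$ and $\sin(\theta_2+\delta)=\cos\varphi_2$. Your tangency derivation reaches it without the $\tan\delta$ detour. Expanding directly in $1/p_i$, with $|\delta|\leqslant\max(\varphi_1,\varphi_2)$ as the a priori smallness, correctly produces $c_3$ and $c_5$, and faster than the paper's $(u,v)$, $(A,B)$ computation.

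However, the $F$ written in Step~1 has the opposite sign on the cosine difference and contradicts Step~3. The correct equation is $\cos\frac{x}{p_1}-\cos\frac{\pi-x}{p_2}+a\cos x=0$. To see this, take $\Gamma^{1,m}$: the crossing chord rises from the leftmost point, so $x=\varphi_1<\pi/2$. Your Step~1 equation instead reads $(a-1)\cos x=-\cos\varphi_2$, which forces $x>\pi/2$ once $a>1$.

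The genuine gap is the uniqueness claim in Step~2, which must hold for every pair $(p_1,p_2)$ and every $a>0$. You treat the non-$a$ part of $F'$ as an error to be dominated by $a\sin x$ on the admissible window, and this fails for small $p_i$:
\begin{itemize}
\item For $p_1\in\{1,2\}$ the constraint $x-\varphi_1\leqslant\pi/2$ is vacuous; it reads $0\leqslant\pi/2$, respectively $x/2\leqslant\pi/2$. So the window extends to $x\to\pi$, where $a\sin x\to 0$.
\item For small $p_i$ the terms $p_i^{-1}\sin(\cdot)$ are of order one, so they are not small compared with $a\sin x$ when $a$ is small.
\end{itemize}
The unspecified ``convexity argument'' does not fill this.

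The missing observation, which is the key step in the paper, is that no domination is needed. With the correct sign,
\[
F'(x)=-a\sin x-\frac{1}{p_1}\sin\frac{x}{p_1}-\frac{1}{p_2}\sin\frac{\pi-x}{p_2},
\]
and for $x\in(0,\pi)$ both $x/p_1$ and $(\pi-x)/p_2$ lie in $(0,\pi)$, so all three terms are negative. Hence $F$ is strictly decreasing on all of $(0,\pi)$, for every $p_1,p_2\geqslant 1$ and every $a>0$, with no window at all; this is the paper's bound $\partial_\delta H<-a\cos\delta$. With your Step~1 sign the two correction terms would oppose $-a\sin x$, which is why the window argument looked fragile.

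Once the sign is fixed and Step~2 is replaced by this observation, the rest of your proposal goes through. That includes the $\Gamma^{q,q}$ case and the regularity check. It also includes the intermediate value argument on a window of width comparable to $1/(mq)$, where $a\cos x$ dominates the $\mathcal{O}(q^{-2})$ cosine difference.
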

    \begin{proof}
    Fix $p_1, p_2 \in \n$ and assume the orbit $\Gamma^{p_1, p_2}$ exists.
    For brevity we drop the superscripts, writing $\varphi_1, \varphi_2$ in place of $\varphi_1^{p_1, p_2}, \varphi_2^{p_1, p_2}$.
    Let $O_1$ and $O_2$ denote the centers of the left and right circular arcs, respectively.
    Let $P$ be the final bouncing point on the left arc and $Q$ the first on the right arc, with $PA$ and $QB$ perpendicular to the symmetry axis at $A$ and $B$, respectively.
    Set $\theta_1 \define \angle AO_1 P$ and $\theta_2 \define \angle BO_2 Q$, and let $\delta$ be the angle between $PQ$ and the horizontal line through $P$, counted positive when $\abs{AP} > \abs{BQ}$ (see Figure~\ref{fig: 2q2h}).
    Elementary geometry gives
     \[
        \theta_1= (p_1 - 1) \varphi_1 = \parentheses[\Big]{ \frac{\pi}{2} - \varphi_1 } + \delta \quad \text{and} \quad
        \theta_2= (p_2 - 1) \varphi_2 = \parentheses[\Big]{ \frac{\pi}{2} - \varphi_2 } - \delta.
    \]
    It follows that the angles $\varphi_1$, $\theta_1$, $\varphi_2$, $\theta_2$ are functions of $\xi_1$, $\xi_2$, $\delta$, given by 
    \begin{equation}  \label{eq:temp:lem:Taylor expansion of collision angle:relation of angle}
        \begin{split}
            & \varphi_1 = \xi_1 \parentheses[\Big]{ \frac{\pi}{2}+\delta }, \quad \text{and} \quad \theta_1 = \parentheses[\Big]{ \frac{\pi}{2}+\delta } (1-\xi_1); \\
            & \varphi_2 = \xi_2 \parentheses[\Big]{ \frac{\pi}{2}-\delta }, \quad \text{and} \quad \theta_2 = \parentheses[\Big]{ \frac{\pi}{2}-\delta } (1-\xi_2),
        \end{split}
    \end{equation}
    where $\xi_i \define 1/p_i$ for each $i \in \set{1, \, 2}$.
    Note that $\theta_{1} \leqslant \pi / 2$ and $\theta_{2} \leqslant \pi / 2$.
    This is equivalent to
    \begin{equation}    \label{eq:temp:lem:Taylor expansion of collision angle:boundary condition}
        - \frac{\xi_{2}}{1 - \xi_{2}} \frac{\pi}{2} \leqslant \delta \leqslant \frac{\xi_{1}}{1 - \xi_{1}} \frac{\pi}{2}.
    \end{equation}
    For $p_i = 1$ the corresponding fraction is to be read as $+\infty$, since the angle $\theta_i$ vanishes identically and imposes no constraint.

    If $\Gamma^{p_1, p_2}$ exists, the following relation follows from elementary geometry (Figure~\ref{fig: 2q2h}):
    \begin{equation} \label{eq:temp:lem:Taylor expansion of collision angle:geometric relation}
        \tan \delta = \frac{\abs{AP} - \abs{BQ}}{\abs{AB}} = \frac{\sin\theta_1 -  \sin\theta_2}{ a + \cos\theta_1 + \cos\theta_2 }.
    \end{equation}
    In view of~\eqref{eq:temp:lem:Taylor expansion of collision angle:geometric relation}, we define
    \[
        F(\xi_1, \xi_2, \delta) \define \parentheses{ \sin\theta_1 -  \sin\theta_2 } - \tan\delta \parentheses{ a + \cos\theta_1 + \cos\theta_2 },
    \]
    where $\theta_{1}$ and $\theta_{2}$ are given by \eqref{eq:temp:lem:Taylor expansion of collision angle:relation of angle} and depend on $\xi_1$, $\xi_2$, and $\delta$.
    Then for each pair of periods $\juxtapose{p_1}{p_2} \in \n$, the $(p_1, p_2)$-type hybrid periodic orbit $\Gamma^{p_1, p_2}$ exists if and only if there exists $\delta \in (- \frac{\pi}{2}, \frac{\pi}{2})$ such that $F(\xi_1, \xi_2, \delta) = 0$ and \eqref{eq:temp:lem:Taylor expansion of collision angle:boundary condition} holds.
    Necessity follows from the relations just derived.
    Conversely, suppose $\delta \in (- \frac{\pi}{2}, \frac{\pi}{2})$ satisfies $F(\xi_1, \xi_2, \delta) = 0$ together with~\eqref{eq:temp:lem:Taylor expansion of collision angle:boundary condition}, and define $\varphi_i$ and $\theta_i$ by~\eqref{eq:temp:lem:Taylor expansion of collision angle:relation of angle}.
    Place collision points on the two arcs at the positions prescribed by~\eqref{eq: Gamma qi s varphi} and~\eqref{eq: Gamma qi s varphi-}; they lie on the closed arcs, since $\abs{s_i} \leqslant (p_i - 1)\, \varphi_i = \theta_i$ and $\theta_i \leqslant \frac{\pi}{2}$ by~\eqref{eq:temp:lem:Taylor expansion of collision angle:boundary condition}.
    Within each cycle, consecutive points subtend the central angle $2 \varphi_i$, so every chord of the cycle makes the angle $\varphi_i$ with the tangents at its endpoints, and the reflection law holds at each interior collision.
    The segment joining the final point $P$ of the left cycle and the first point $Q$ of the right cycle has inclination $\delta'$ with $\tan \delta' = \frac{\sin\theta_1 - \sin\theta_2}{a + \cos\theta_1 + \cos\theta_2}$, by the same elementary geometry as in~\eqref{eq:temp:lem:Taylor expansion of collision angle:geometric relation}; the equation $F(\xi_1, \xi_2, \delta) = 0$ states that $\delta' = \delta$, and the relations~\eqref{eq:temp:lem:Taylor expansion of collision angle:relation of angle} then express the reflection law at $P$ and at $Q$.
    The lower crossing segment is the mirror image of $PQ$ across the symmetry axis, with the reflection law at its endpoints holding by symmetry, so the trajectory closes and is symmetric under reflection across the horizontal midline.
    Finally, each segment of the trajectory joins two boundary points that do not lie on a common flat edge, so by convexity its interior lies in the interior of $\Omega_{*}$; in particular the flat edges are untouched.
    This proves the stated equivalence.

    The admissible $\delta$ is moreover unique.
    Indeed, set
    \begin{equation}    \label{eq:temp:lem:Taylor expansion of collision angle:H}
        \begin{split}
            H(\xi_1, \xi_2, \delta) &\define F(\xi_1, \xi_2, \delta) \cos\delta \\
            &= \cos\delta \parentheses{ \sin\theta_1 - \sin\theta_2 } - \sin\delta \parentheses{ a + \cos\theta_1 + \cos\theta_2 } \\
            &= -a \sin\delta + \sin(\theta_1 - \delta) - \sin(\theta_2 + \delta),
        \end{split}
    \end{equation}
    which is smooth on $\parentheses[\big]{ -\frac{\pi}{2}, \frac{\pi}{2} }$ and has the same zeros as $F$, since $\cos\delta > 0$.
    Differentiating~\eqref{eq:temp:lem:Taylor expansion of collision angle:H} with respect to $\delta$ and using $\partial_{\delta}\theta_1 - 1 = -\xi_1$ and $\partial_{\delta}\theta_2 + 1 = \xi_2$, we obtain
    \[
        \partial_{\delta} H = -a \cos\delta - \xi_1 \cos\parentheses{ \theta_1 - \delta } - \xi_2 \cos\parentheses{ \theta_2 + \delta }.
    \]
    Since $\theta_1 - \delta = \frac{\pi}{2} - \varphi_1$ and $\theta_2 + \delta = \frac{\pi}{2} - \varphi_2$ by~\eqref{eq:temp:lem:Taylor expansion of collision angle:relation of angle}, this becomes
    \begin{equation}    \label{eq:temp:lem:Taylor expansion of collision angle:H derivative}
        \partial_{\delta} H(\xi_1, \xi_2, \delta) = -a \cos\delta - \xi_1 \sin\varphi_1 - \xi_2 \sin\varphi_2 .
    \end{equation}
    Since $\xi_i \in (0, 1]$ and $\abs{\delta} < \frac{\pi}{2}$, we have $\varphi_i = \xi_i \parentheses[\big]{ \frac{\pi}{2} \pm \delta } \in (0, \pi)$, so both sines are positive and
    \begin{equation}    \label{eq:temp:lem:Taylor expansion of collision angle:H monotone}
        \partial_{\delta} H(\xi_1, \xi_2, \delta) < -a \cos\delta < 0
        \qquad \text{for } \abs{\delta} < \frac{\pi}{2},
    \end{equation}
    with no restriction on $a > 0$ and none on $\xi_1$, $\xi_2$ beyond $\xi_i \in (0, 1]$.
    Hence $H$, and with it $F$, has at most one zero in $\parentheses[\big]{ -\frac{\pi}{2}, \frac{\pi}{2} }$, showing that the pair $\juxtapose{p_1}{p_2}$ determines $\Gamma^{p_1, p_2}$.

    Since $F(0, 0, 0) = 0$ and $\partial_{\delta} F(0, 0, 0) = - a < 0$, it follows from the implicit function theorem that there exists a neighborhood $\mathcal{U}$ of $(0, 0, 0)$ where the equation $F(\xi_1, \xi_2, \delta) = 0$ has a unique solution $\delta = \delta(\xi_1, \xi_2)$.
    In particular, by the continuity of $\partial_{\delta} F$, there exist $\delta_{0} > 0$ and integers $\juxtapose{p_1^0}{p_2^0} \in \n$ such that $\mathcal{U}_{0} \define (- 1 / p_1^0, 1 / p_1^0)\times (- 1 / p_2^0, 1 / p_2^0)\times (-\delta_0, \delta_0) \subseteq \mathcal{U}$.
    Here the constants $p_1^0$, $p_2^0$, and $\delta_{0}$ may depend on $a$.

    \smallskip

    We claim that for each $m \in \n$, there exists an integer $q_{\ast} = q_{\ast}(m, a) \in \n$ such that for each integer $q \geqslant q_{\ast}$, the $(q, mq)$-type hybrid periodic orbit $\Gamma^{q, mq}$ exists.

    \smallskip

    Fix $m \in \n$.
    For $\xi > 0$, set $(\xi_1, \xi_2) = (\xi, \xi/m)$.
    On the one hand, note that $\xi_{1} \geqslant \xi_{2}$ implies that $F(\xi_{1}, \xi_{2}, 0) = \sin\theta_1 - \sin\theta_2 \leqslant 0$.
    On the other hand, at $\delta = - \frac{\xi_{2}}{1 - \xi_{2}} \frac{\pi}{2}$ we have $\theta_{2} = \frac{\pi}{2}$, hence $\sin\theta_{2} = 1$ and $\cos\theta_{2} = 0$, so
    \[
        F\parentheses[\big]{ \xi_{1}, \xi_{2}, - \frac{\xi_{2}}{1 - \xi_{2}} \frac{\pi}{2} }
        = \sin\theta_{1} - 1 + \tan\parentheses[\Big]{ \frac{\xi_{2}}{1 - \xi_{2}} \frac{\pi}{2} } \parentheses[\big]{ a + \cos\theta_{1} }.
    \]
    As $\xi \to 0^{+}$ with $(\xi_1, \xi_2) = (\xi, \xi/m)$, Taylor expansion gives
    \[
        \tan\parentheses[\Big]{ \frac{\xi_2}{1 - \xi_2} \frac{\pi}{2} } \parentheses[\big]{ a + \cos\theta_1 }
        = \frac{\pi a}{2m}\, \xi + \mathcal{O}(\xi^2),
        \qquad
        \sin\theta_1 - 1 = \mathcal{O}(\xi^2).
    \]
    Therefore
    \[
        F\parentheses[\Big]{ \xi_1, \xi_2, -\frac{\xi_2}{1 - \xi_2}\frac{\pi}{2} }
        = \frac{\pi a}{2m}\, \xi + \mathcal{O}(\xi^2) > 0,
    \]
    for all sufficiently small $\xi > 0$.
    Since $\frac{\xi/m}{1 - \xi/m}\frac{\pi}{2} \to 0$ as $\xi \to 0^{+}$, we may choose $\xi_0 = \xi_0(m, a) > 0$ sufficiently small so that
    \[
        \xi_0 < \min\set[\big]{1/p_1^0, \, 1/p_2^0},
        \qquad
        \frac{\xi_0/m}{1 - \xi_0/m}\frac{\pi}{2} < \delta_0,
    \]
    and
    \[
        F\parentheses[\Big]{ \xi, \frac{\xi}{m}, -\frac{\xi/m}{1 - \xi/m}\frac{\pi}{2} } > 0,
    \]
    for each $\xi \in (0, \xi_0)$.
    Set $V_m(\xi_0) \define \set[\big]{ (\xi, \, \xi/m) \describe \xi \in (0, \xi_0) }$.
    For each $(\xi_1, \xi_2) \in V_m(\xi_0)$, we therefore have both $F(\xi_1, \xi_2, 0) \leqslant 0$ and $F\parentheses[\big]{\xi_1, \xi_2, -\frac{\xi_2}{1 - \xi_2}\frac{\pi}{2}} > 0$.
    The intermediate value theorem gives a zero in $[-\frac{\xi_2}{1 - \xi_2}\frac{\pi}{2}, 0]$.
    Since this interval lies in $(-\delta_0, \delta_0)$, uniqueness in $\mathcal{U}_0$ shows that this zero is $\delta(\xi_1, \xi_2)$; hence
    \[
        -\frac{\xi_2}{1 - \xi_2}\frac{\pi}{2}
        \leqslant \delta(\xi_1, \xi_2)
        \leqslant 0
        \leqslant \frac{\xi_1}{1 - \xi_1}\frac{\pi}{2}.
    \]
    Thus $\delta(\xi_1, \xi_2)$ satisfies the boundary condition~\eqref{eq:temp:lem:Taylor expansion of collision angle:boundary condition}.
    The left-hand inequality is in fact strict, and this is what makes the resulting orbit regular.
    Indeed, $F\parentheses[\big]{\xi_1, \xi_2, -\frac{\xi_2}{1 - \xi_2}\frac{\pi}{2}} > 0$ while $F$ vanishes at $\delta(\xi_1, \xi_2)$, so $\delta(\xi_1, \xi_2) > -\frac{\xi_2}{1 - \xi_2}\frac{\pi}{2}$, whence $\theta_2 < \frac{\pi}{2}$ by~\eqref{eq:temp:lem:Taylor expansion of collision angle:relation of angle}; and $\delta(\xi_1, \xi_2) \leqslant 0$ gives $\theta_1 \leqslant \frac{\pi}{2}(1 - \xi_1) < \frac{\pi}{2}$.
    The collision points on the $i$-th arc lie within angular distance $\theta_i$ of the midpoint of that arc, so none of them is a junction point.
    Choose an integer $q_{\ast} = q_{\ast}(m, a) \in \n$ such that $q_{\ast} > 1/\xi_0$.
    For each integer $q \geqslant q_{\ast}$, we have $(1/q, 1/(mq)) \in V_m(\xi_0)$, and the equivalence established above gives the existence of the hybrid periodic orbit $\Gamma^{q, mq}$, which is regular by the previous paragraph.
    This proves the claim.

    \smallskip

    Fix an integer $P_0 = P_0(a) \in \n$ with $P_0 > \max\set[\big]{ p_1^0, \, p_2^0 }$ and $\frac{1/P_0}{1 - 1/P_0}\frac{\pi}{2} < \delta_0$, and suppose that the hybrid periodic orbit $\Gamma^{p_{1}, p_{2}}$ exists with $\juxtapose{p_1}{p_2} \geqslant P_0$.
    The admissibility condition~\eqref{eq:temp:lem:Taylor expansion of collision angle:boundary condition} gives
    \[
        \abs{\delta}
        \leqslant \frac{\pi}{2} \max\set[\Big]{ \frac{\xi_1}{1 - \xi_1}, \, \frac{\xi_2}{1 - \xi_2} }
        \leqslant \frac{1/P_0}{1 - 1/P_0}\frac{\pi}{2}
        < \delta_0,
    \]
    so the triple $(\xi_1, \xi_2, \delta)$ lies in $\mathcal{U}_0$, and $\delta$ agrees with the local solution $\delta(\xi_1, \xi_2)$.
    Since $F$ is real-analytic, the implicit function theorem further implies that the function $\delta=\delta(\xi_1, \xi_2)$ is real-analytic.
    Applying the linear transformation 
    $\begin{cases}
        \xi_1 = u + v, \\
        \xi_2 = u - v,
    \end{cases}$
    we consider $\delta$ as a real-analytic function of $(u, v)$ with $\lim_{(u, v) \to (0, 0)} \delta(u, v) = 0$.
    It follows from \eqref{eq:temp:lem:Taylor expansion of collision angle:relation of angle} that
    \[
        \begin{split}
            \sin\theta_1 - \sin\theta_2
            &= 2\cos\frac{\theta_1 + \theta_2}{2} \sin\frac{\theta_1 - \theta_2}{2}
            = 2\sin\parentheses[\Big]{ \frac{\pi}{2} u + \delta v } \sin\parentheses[\Big]{ -\frac{\pi}{2} v + \delta - u\delta }, \\
            \cos\theta_1+\cos\theta_2 
            &= 2\cos\frac{\theta_1 + \theta_2}{2} \cos\frac{\theta_1 - \theta_2}{2}
            = 2\sin\parentheses[\Big]{ \frac{\pi}{2} u + \delta v } \cos\parentheses[\Big]{ -\frac{\pi}{2} v + \delta - u\delta } .
        \end{split}
    \]
    Setting $A \define \frac{\pi}{2} u + \delta v$ and $B \define -\frac{\pi}{2} v + \delta(1 - u)$, the relation $F(\xi_1, \xi_2, \delta) = 0$ takes the form
    \begin{equation} \label{eq:temp:lem:Taylor expansion of collision angle:F=0}
        2\sin A \sin B - \tan\delta \parentheses[\big]{ a + 2 \sin A \cos B } = 0.
    \end{equation}
    To determine the asymptotic expansion of the real-analytic function $\delta(u, v)$ near the origin, set $\rho \define \sqrt{u^2 + v^2}$.
    Since $\delta(u, v) \to 0$ as $\rho \to 0$, expanding the factors in~\eqref{eq:temp:lem:Taylor expansion of collision angle:F=0} to second order in $\rho$ yields
    \[
        2\parentheses[\Big]{ \frac{\pi}{2} u }\parentheses[\Big]{ -\frac{\pi}{2} v + \delta } - \delta (a + \pi u) = \mathcal{O}(\rho^3).
    \]
    The mixed terms $\pi u \delta$ cancel, leaving $-\frac{\pi^2}{2} uv - a\delta = \mathcal{O}(\rho^3)$, or equivalently,
    \begin{equation} \label{eq:temp:lem:Taylor expansion of collision angle:delta exp 2}
        \delta = -\frac{\pi^2}{2a}\, uv + \mathcal{O}(\rho^3).
    \end{equation}
    In particular, $\delta = \mathcal{O}(\rho^2)$.

    To determine the next non-vanishing term in the expansion, we expand~\eqref{eq:temp:lem:Taylor expansion of collision angle:F=0} to fourth order in $\rho$.
    Since $\delta = \mathcal{O}(\rho^2)$, we have $\tan\delta = \delta + \mathcal{O}(\rho^6)$, while
    \[
        2\sin A \sin B = 2AB - \frac{1}{3} AB \bigl(A^2 + B^2\bigr) + \mathcal{O}(\rho^5)
    \]
    and
    \[
        \tan\delta \bigl(a + 2\sin A \cos B\bigr) = \delta (a + 2A) + \mathcal{O}(\rho^5) = a\delta + \pi u \delta + \mathcal{O}(\rho^5).
    \]
    Substituting $A = \frac{\pi}{2} u + \delta v$ and $B = -\frac{\pi}{2} v + \delta(1 - u)$, we find
    \[
        2AB = -\frac{\pi^2}{2} uv + \pi u \delta - \pi (u^2 + v^2) \delta + \mathcal{O}(\rho^5),
        \qquad
        -\frac{1}{3} AB \bigl(A^2 + B^2\bigr) = \frac{\pi^4}{48}\, uv\, (u^2 + v^2) + \mathcal{O}(\rho^5).
    \]
    The terms $\pi u \delta$ cancel once again, and~\eqref{eq:temp:lem:Taylor expansion of collision angle:F=0} simplifies to
    \[
        -\frac{\pi^2}{2} uv - a\delta + \parentheses[\Big]{ \frac{\pi^4}{48} uv - \pi \delta } (u^2 + v^2) = \mathcal{O}(\rho^5).
    \]
    Inserting the leading-order expansion~\eqref{eq:temp:lem:Taylor expansion of collision angle:delta exp 2} into the bracket gives
    \[
        a\delta + \frac{\pi^2}{2} uv = \parentheses[\Big]{ \frac{\pi^3}{2a} + \frac{\pi^4}{48} }\, uv\, (u^2 + v^2) + \mathcal{O}(\rho^5),
    \]
    which yields
    \begin{equation} \label{eq:temp:lem:Taylor expansion of collision angle:delta exp 4}
        \delta = -4 c_3 uv + 8 c_5 uv (u^2 + v^2) + \mathcal{O}\bigl((u^2 + v^2)^{5/2}\bigr),
    \end{equation}
    where $c_3 \define \frac{\pi^2}{8a}$ and $c_5 \define \frac{\pi^3}{16} \bigl( \frac{1}{a^2} + \frac{\pi}{24a} \bigr)$.
    Recall that 
    \[
        \begin{cases}
            u = (\xi_1 + \xi_2) / 2, \\
            v = (\xi_1 - \xi_2) / 2.
        \end{cases}
    \]
    Then  
    \begin{equation}    \label{eq:temp:lem:Taylor expansion of collision angle:uv xi}
        uv = \frac{\xi_1^2 - \xi_2^2}{4} = \frac{1/p_1^2 - 1/p_2^2}{4} \quad \text{and} \quad
        uv(u^2 + v^2) = \frac{\xi_1^4-\xi_2^4}{8}=\frac{1/p_1^4-1/p_2^4}{8}.
    \end{equation}
    Then the expansion~\eqref{eq:Taylor expansion of collision angle} follows from~\eqref{eq:temp:lem:Taylor expansion of collision angle:relation of angle}, \eqref{eq:temp:lem:Taylor expansion of collision angle:delta exp 4} and
    \eqref{eq:temp:lem:Taylor expansion of collision angle:uv xi}.

    Finally, let $p_1 = p_2 = q$, so that $\xi_1 = \xi_2$.
    The map $\delta \mapsto -\delta$ then exchanges $\theta_1$ and $\theta_2$ by~\eqref{eq:temp:lem:Taylor expansion of collision angle:relation of angle}, so $F(\xi_1, \xi_2, \cdot)$ is odd; in particular $F(\xi_1, \xi_2, 0) = 0$, and $\delta = 0$ is its only zero by the uniqueness proved above.
    The orbit reduces to two regular $q$-gon inscriptions connected by a horizontal segment;
    the boundary condition $\theta_i = (q - 1) \frac{\pi}{2q} < \frac{\pi}{2}$ is satisfied for each $q \in \n$, and is strict, so no collision point is a junction point.
    Therefore the hybrid periodic orbit $\Gamma^{q, q}$ exists and is regular for each $q \in \n$, and $\varphi_1^{q, q} = \varphi_2^{q, q} = \frac{\pi}{2q}$.
\end{proof}

The following corollary records the Taylor expansions of the collision angles of $\Gamma^{q, mq}$.

\begin{corollary}  \label{cor:Taylor expansion of collision angle}
    For each $m \in \n$, as $q \to +\infty$,
    \begin{equation} \label{eq:cor:Taylor expansion of collision angle}
        \begin{split}
            \varphi_1^{q, mq} &= \frac{\pi}{2 q} - \frac{ c_{3} \parentheses[\big]{ 1 - m^{-2} } }{q^{3}} + \mathcal{O}\parentheses[\big]{ q^{-5} }
            \quad \text{and}   \\
            \varphi_2^{q, mq} &= \frac{\pi}{2 mq} + \frac{ c_{3} \parentheses[\big]{ 1 - m^{-2} } }{ m q^{3} } + \mathcal{O}\parentheses[\big]{ q^{-5} },
        \end{split}
    \end{equation}
    where $c_3 \define \frac{\pi^{2}}{8a}$.
\end{corollary}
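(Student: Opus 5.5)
The plan is to substitute $p_1 = q$ and $p_2 = mq$ into the expansion~\eqref{eq:Taylor expansion of collision angle} of Lemma~\ref{lem:Taylor expansion of collision angle}. Fix $m \in \n$. By Lemma~\ref{lem:Taylor expansion of collision angle}, for $q \geqslant q_{\ast}(m,a)$ the orbit $\Gamma^{q, mq}$ exists. Since both $p_1 = q$ and $p_2 = mq$ tend to infinity with $q$, the expansion~\eqref{eq:Taylor expansion of collision angle} applies for all sufficiently large $q$. This is the only point needing care: the lemma's expansion is stated for sufficiently large $p_1, p_2$ for which the orbit exists, and both conditions hold along the ray $(q, mq)$.

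For $\varphi_1$, substitute and compute
\[
\frac{c_3}{q}\Bigl(\frac{1}{m^2q^2} - \frac{1}{q^2}\Bigr) = -\frac{c_3(1 - m^{-2})}{q^3}.
\]
The $c_5$ term equals $\frac{c_5}{q}\cdot\frac{1 - m^{-4}}{q^4} = \mathcal{O}(q^{-5})$. The remainder is
\[
\mathcal{O}\Bigl(\frac{1}{q}\bigl(q^{-5} + (mq)^{-5}\bigr)\Bigr) = \mathcal{O}(q^{-6}) \subset \mathcal{O}(q^{-5}),
\]
since $m$ is fixed. This gives the first line of~\eqref{eq:cor:Taylor expansion of collision angle}.

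For $\varphi_2$, the leading term is $\frac{\pi}{2mq}$. The $c_3$ term is
\[
-\frac{c_3}{mq}\Bigl(\frac{1}{m^2q^2} - \frac{1}{q^2}\Bigr) = \frac{c_3(1 - m^{-2})}{mq^3}.
\]
The $c_5$ term is $\mathcal{O}(q^{-5})$, and the remainder is $\mathcal{O}(q^{-6})$. Here the implied constants depend on $m$ and $a$, which is permitted since $m$ is fixed. This gives the second line of~\eqref{eq:cor:Taylor expansion of collision angle}.
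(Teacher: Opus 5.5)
Your proposal is correct and follows the paper's route: the corollary is obtained by substituting $(p_1, p_2) = (q, mq)$ into the expansion~\eqref{eq:Taylor expansion of collision angle} of Lemma~\ref{lem:Taylor expansion of collision angle}. You are right that the only point needing care is that the lemma's expansion applies along this ray: existence of $\Gamma^{q, mq}$ holds for $q \geqslant q_{\ast}(m, a)$, and the $c_5$-term and the remainder are both absorbed into $\mathcal{O}(q^{-5})$ for fixed $m$.
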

\begin{proof}
    The expansion~\eqref{eq:cor:Taylor expansion of collision angle} immediately follows from~\eqref{eq:Taylor expansion of collision angle} in Lemma~\ref{lem:Taylor expansion of collision angle}.
\end{proof}

%% file: section/lemmas_orbits_existence.tex
\subsection{Existence of hybrid orbits for long flat edges}
\label{sub:existence for long flat edges}

Lemma~\ref{lem:Taylor expansion of collision angle} shows that the orbits $\Gamma^{q, mq}$ exist for each fixed $m$ and all sufficiently large $q$, with a threshold depending on $a$.
In this subsection we reverse the quantifiers and prove that, once $a \geqslant 12$, the orbits $\Gamma^{q, mq}$ with $m \in \set{2, \, 3, \, 4}$ exist for every $q \geqslant 1$, with collision-angle estimates uniform in $q$.

Given integers $q \geqslant 1$ and $m \in \set{2, \, 3, \, 4}$, set $\xi_1 \define 1/q$ and $\xi_2 \define 1/(mq)$, and recall from the proof of Lemma~\ref{lem:Taylor expansion of collision angle} the function
\begin{equation}\label{eq:orbits:F}
	F(\xi_1, \xi_2, \delta)
	= \parentheses[\big]{ \sin\theta_1 - \sin\theta_2 }
	- \tan\delta \parentheses[\big]{ a + \cos\theta_1 + \cos\theta_2 },
\end{equation}
where $\theta_1 = \parentheses[\big]{ \frac{\pi}{2} + \delta }(1 - \xi_1)$ and $\theta_2 = \parentheses[\big]{ \frac{\pi}{2} - \delta }(1 - \xi_2)$ by the angle relations~\eqref{eq:temp:lem:Taylor expansion of collision angle:relation of angle}.
The orbit $\Gamma^{q, mq}$ exists if and only if $F(\xi_1, \xi_2, \cdot)$ vanishes at some $\delta$ satisfying the admissibility condition~\eqref{eq:temp:lem:Taylor expansion of collision angle:boundary condition}, in which case the collision angles are
\begin{equation}\label{eq:orbits:angle corrections}
	\varphi_1^{q, mq} = \frac{\pi}{2q} + \varepsilon_1,
	\qquad
	\varphi_2^{q, mq} = \frac{\pi}{2mq} + \varepsilon_2,
	\qquad\text{with}\quad
	\varepsilon_1 = \frac{\delta}{q},
	\quad
	\varepsilon_2 = -\frac{\delta}{mq}.
\end{equation}
For admissible $\delta$, both angles lie in $\bigl[0, \frac{\pi}{2}\bigr]$; since $m \geqslant 2$ forces $\xi_1 > \xi_2$ and hence $\theta_1 < \theta_2$ at $\delta = 0$, we obtain
\begin{equation}\label{eq:orbits:sign at zero}
	F(\xi_1, \xi_2, 0)
	= \parentheses[\big]{ \sin\theta_1 - \sin\theta_2 } \Big|_{\delta = 0}
	< 0.
\end{equation}
To establish existence and quantitative control, recall from the proof of Lemma~\ref{lem:Taylor expansion of collision angle} that the auxiliary function
\begin{equation}\label{eq:orbits:H}
	H(\xi_1, \xi_2, \delta) \define F(\xi_1, \xi_2, \delta) \cos\delta
\end{equation}
shares the zeros of $F$ on $(-\pi/2, \pi/2)$, and its derivative satisfies the identity
\begin{equation}\label{eq:orbits:dH}
	\partial_\delta H(\xi_1, \xi_2, \delta) = -a \cos\delta - \xi_1 \sin\varphi_1 - \xi_2 \sin\varphi_2.
\end{equation}
Since the collision angles lie in $(0, \pi)$, the two sine terms are strictly positive; discarding them yields the global monotonicity bound
\begin{equation}\label{eq:orbits:monotonicity}
	\partial_\delta H(\xi_1, \xi_2, \delta) \leqslant -a \cos\delta
	\qquad \text{for } \abs{\delta} < \frac{\pi}{2},
\end{equation}
which, unlike its counterpart for $F$, holds on the whole interval and for every $a > 0$.
Integrating~\eqref{eq:orbits:monotonicity} yields
\begin{equation}\label{eq:orbits:integrated}
	H(\xi_1, \xi_2, \delta) - H(\xi_1, \xi_2, \delta')
	\geqslant a \parentheses[\big]{ \sin\delta' - \sin\delta }
	\qquad \parentheses[\Big]{ -\frac{\pi}{2} < \delta \leqslant \delta' < \frac{\pi}{2} },
\end{equation}
and, taking one of $\delta$, $\delta'$ to be a zero of $F$ and the other to be $0$, where $H(\xi_1, \xi_2, 0) = F(\xi_1, \xi_2, 0)$,
\begin{equation}\label{eq:orbits:sine bound}
	a \abs{\sin\delta} \leqslant \abs[\big]{ F(\xi_1, \xi_2, 0) }
	\qquad \text{at every zero } \delta \text{ of } F \text{ in } \parentheses[\Big]{ -\frac{\pi}{2}, \frac{\pi}{2} }.
\end{equation}
Since $\abs{\delta} \leqslant \frac{\pi}{2} \abs{\sin\delta}$ on $\bigl[-\frac{\pi}{2}, \frac{\pi}{2}\bigr]$, the bound~\eqref{eq:orbits:sine bound} gives $\abs{\delta} \leqslant \frac{2}{a} \abs[\big]{ F(\xi_1, \xi_2, 0) }$.

\begin{lemma}
\label{lem:short orbit control}
	For each $a \geqslant 12$ and each $m \in \set{2, \, 3, \, 4}$, the orbit $\Gamma^{1, m}$ exists and is regular, and its angle corrections in~\eqref{eq:orbits:angle corrections} satisfy $\abs{\varepsilon_1} + \abs{\varepsilon_2} \leqslant 3 c_3$.
\end{lemma}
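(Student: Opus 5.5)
The plan is to specialize the general framework above to $q = 1$, that is, $\xi_1 = 1$ and $\xi_2 = 1/m$. The left arc then carries a single collision. By~\eqref{eq:temp:lem:Taylor expansion of collision angle:relation of angle} we have $\theta_1 = (\frac{\pi}{2} + \delta)(1 - \xi_1) \equiv 0$, so
\[
F(1, 1/m, \delta) = -\sin\theta_2 - \tan\delta \,\bigl(a + 1 + \cos\theta_2\bigr),
\qquad
\theta_2 = \Bigl(\frac{\pi}{2} - \delta\Bigr)\Bigl(1 - \frac{1}{m}\Bigr).
\]
For $p_1 = 1$ the upper constraint in~\eqref{eq:temp:lem:Taylor expansion of collision angle:boundary condition} is vacuous. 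Admissibility therefore reduces to $\delta \geqslant \delta_{-} \define -\frac{1}{m-1}\frac{\pi}{2}$ together with $\abs{\delta} < \frac{\pi}{2}$. Since $m \geqslant 2$, we have $\delta_{-} \in [-\frac{\pi}{2}, -\frac{\pi}{6}]$; for $m = 2$ the endpoint $\delta_{-} = -\frac{\pi}{2}$ needs the separate treatment below.

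\emph{Existence by the intermediate value theorem.} At $\delta = 0$ we have $F = -\sin\bigl(\frac{\pi}{2}(1 - \frac1m)\bigr) = -\cos\frac{\pi}{2m} < 0$, consistent with~\eqref{eq:orbits:sign at zero}.

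For $m \in \set{3, 4}$, evaluate at $\delta_{-}$. There $\theta_2 = \frac{\pi}{2}$, so
\[
F = -1 + (a+1)\tan\frac{\pi}{2(m-1)} \geqslant -1 + 13 \tan\frac{\pi}{6} > 0 .
\]
For $m = 2$ we instead use $\tan\delta \to -\infty$ as $\delta \to -\frac{\pi}{2}^{+}$, while $a + 1 + \cos\theta_2 \geqslant a > 0$. Hence $F \to +\infty$, and $F > 0$ near the left endpoint.

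In all cases the intermediate value theorem produces a zero $\delta$ of $F$ in the open interval $(\max\{\delta_{-}, -\frac{\pi}{2}\}, 0)$. The strict monotonicity~\eqref{eq:orbits:monotonicity} of $H = F\cos\delta$ makes this zero unique. By the equivalence in the proof of Lemma~\ref{lem:Taylor expansion of collision angle}, which applies verbatim with $p_1 = 1$, this zero yields the orbit $\Gamma^{1, m}$.

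\emph{Regularity.} The left collision sits at $s_1 = 0$, the midpoint of $\Gamma_1$, so it is not a junction point. Since $\delta > \delta_{-}$ strictly, we get $\theta_2 < \frac{\pi}{2}$. All right collisions then lie strictly inside $\Gamma_2$, exactly as in the regularity argument of Lemma~\ref{lem:Taylor expansion of collision angle}.

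\emph{Quantitative bound.} By~\eqref{eq:orbits:angle corrections} with $q = 1$, $\varepsilon_1 = \delta$ and $\varepsilon_2 = -\delta/m$. Indeed, $\varphi_1 = \xi_1(\frac{\pi}{2} + \delta) = \frac{\pi}{2} + \delta$. Applying~\eqref{eq:orbits:sine bound} and the consequence $\abs{\delta} \leqslant \frac{2}{a}\abs{F(\xi_1, \xi_2, 0)}$ gives
\[
\abs{\varepsilon_1} + \abs{\varepsilon_2}
= \Bigl(1 + \frac1m\Bigr)\abs{\delta}
\leqslant \frac32 \cdot \frac{2}{a}\cos\frac{\pi}{2m}
\leqslant \frac{3}{a}
< \frac{3\pi^2}{8a} = 3c_3 ,
\]
since $\pi^2/8 > 1$.

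The only delicate point is the sign of $F$ near the left endpoint of the admissible interval. This is where the hypothesis $a \geqslant 12$ enters, and $m = 2$ needs a separate treatment because $\delta_{-}$ coincides with $-\frac{\pi}{2}$. Everything else is a direct substitution into the bounds already established.
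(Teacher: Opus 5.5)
Your proof is correct and follows the paper's argument. It uses the intermediate value theorem against $F(1, 1/m, 0) < 0$, uniqueness from the monotonicity of $H$, regularity from $\delta > \delta_{-}$, and the estimate $\abs{\varepsilon_1} + \abs{\varepsilon_2} \leqslant \frac{3}{2}\abs{\delta} \leqslant \frac{3}{a} \leqslant 3 c_3$ via \eqref{eq:orbits:sine bound}. The only difference is where you certify positivity of $F$. The paper uses the single admissible point $\delta = -\frac{\pi}{6}$ for all three values of $m$, together with the integrated bound \eqref{eq:orbits:integrated}, which gives $H \geqslant -1 + \frac{a}{2} > 0$ there. You instead evaluate $F$ directly at the endpoint $\delta_{-}$ for $m \in \set{3, \, 4}$, and let $\delta \to -\frac{\pi}{2}$ from above for $m = 2$.
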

\begin{proof}
	For $q = 1$ we have $\xi_1 = 1$ and $\theta_1 = 0$, so~\eqref{eq:orbits:sign at zero} reads
	\[
		F\parentheses[\Big]{1, \frac{1}{m}, 0}
		= -\sin\parentheses[\bigg]{ \frac{\pi}{2} \parentheses[\Big]{ 1 - \frac{1}{m} } }
		\in (-1, 0).
	\]
	The interval $\bigl[-\frac{\pi}{6}, 0\bigr]$ is admissible for each $m \in \set{2, \, 3, \, 4}$.
	Indeed, the angle $\theta_1$ vanishes identically, so admissibility reduces to the lower constraint in~\eqref{eq:temp:lem:Taylor expansion of collision angle:boundary condition}, whose endpoint equals $-\frac{\pi}{2(m - 1)} \leqslant -\frac{\pi}{6}$.
	The integrated bound~\eqref{eq:orbits:integrated}, applied with $\delta = -\frac{\pi}{6}$ and $\delta' = 0$, therefore gives
	\[
		H\parentheses[\Big]{1, \frac{1}{m}, -\frac{\pi}{6}}
		\geqslant F\parentheses[\Big]{1, \frac{1}{m}, 0} + \frac{a}{2}
		\geqslant -1 + 6
		> 0,
	\]
	and hence $F\parentheses[\big]{1, \frac{1}{m}, -\frac{\pi}{6}} > 0$, since $H = F \cos\delta$ and $\cos\frac{\pi}{6} > 0$.
	The intermediate value theorem now gives an admissible zero $\delta \in \parentheses[\big]{-\frac{\pi}{6}, 0}$ of $F$, unique by Lemma~\ref{lem:Taylor expansion of collision angle}.
	Thus $\Gamma^{1, m}$ exists, and it is regular: $\theta_1$ vanishes identically, while $\delta > -\frac{\pi}{6} \geqslant -\frac{\pi}{2(m - 1)}$ gives $\theta_2 < \frac{\pi}{2}$.
	The bound~\eqref{eq:orbits:sine bound}, at this zero, gives $\abs{\delta} \leqslant \frac{2}{a} \abs[\big]{F(1, 1/m, 0)} \leqslant \frac{2}{a}$, whence, by~\eqref{eq:orbits:angle corrections},
	\[
		\abs{\varepsilon_1} + \abs{\varepsilon_2}
		\leqslant \frac{3}{2} \abs{\delta}
		\leqslant \frac{3}{a}
		= \frac{24}{\pi^2}\, c_3
		\leqslant 3 c_3.
		\qedhere
	\]
\end{proof}
Only $m \in \set{2, \, 3}$ is used below; the value $m = 4$ is included for uniformity with Lemma~\ref{lem:tail orbit asymptotics}.

\begin{lemma}	\label{lem:tail orbit asymptotics}
	For each $a \geqslant 12$, each integer $q \geqslant 2$, and each $m \in \set{2, \, 3, \, 4}$, the orbit $\Gamma^{q, mq}$ exists and is regular, and its angle corrections in~\eqref{eq:orbits:angle corrections} satisfy
	\[
		\varepsilon_1 = -c_3\, \bigl(1 - m^{-2}\bigr)\, q^{-3} + \mathcal{O}\bigl(c_3\, q^{-5}\bigr),
		\qquad
		\varepsilon_2 = c_3\, m^{-1} \bigl(1 - m^{-2}\bigr)\, q^{-3} + \mathcal{O}\bigl(c_3\, q^{-5}\bigr),
	\]
	with absolute implicit constants.
	In particular, we have $\abs{\varepsilon_1} + \abs{\varepsilon_2} \leqslant C_0\, c_3\, q^{-3}$ for an absolute constant $C_0 > 0$.
\end{lemma}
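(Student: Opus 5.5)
The argument follows the proof of Lemma~\ref{lem:short orbit control}, with the intermediate value theorem applied on a small $\delta$-interval, and then a quantitative version of the expansion in Lemma~\ref{lem:Taylor expansion of collision angle} whose constants are uniform in $q \geqslant 2$ and in $a \geqslant 12$.

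\emph{Existence and regularity.} Fix $m \in \set{2,3,4}$, $q \geqslant 2$, $\xi_1 = 1/q$, $\xi_2 = 1/(mq)$. We first bound $\abs{F(\xi_1,\xi_2,0)}$. At $\delta = 0$ we have $\theta_i = \frac{\pi}{2}(1-\xi_i)$, so
\[
F(\xi_1,\xi_2,0) = \cos\bigl(\tfrac{\pi}{2}\xi_1\bigr) - \cos\bigl(\tfrac{\pi}{2}\xi_2\bigr)
\in \bigl[-\tfrac{\pi^2}{8}(\xi_1^2-\xi_2^2), 0\bigr).
\]
In particular $\abs{F(\xi_1,\xi_2,0)} \leqslant \frac{\pi^2}{8q^2}\bigl(1-m^{-2}\bigr) \leqslant \frac{\pi^2}{32}$. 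Take $\delta_- \define -\frac{\xi_2}{1-\xi_2}\frac{\pi}{2}$, the admissibility endpoint. Since $mq \geqslant 4$, we get $\abs{\delta_-} \geqslant \frac{\pi}{2}\xi_2$. We then apply~\eqref{eq:orbits:integrated} with $\delta = \delta_-$ and $\delta' = 0$. This gives
\[
H(\delta_-) \geqslant F(0) + a\sin\abs{\delta_-} \geqslant -\tfrac{\pi^2}{8}(\xi_1^2-\xi_2^2) + a\cdot\tfrac{2}{\pi}\cdot\tfrac{\pi}{2}\xi_2 .
\]
It therefore suffices that $a\xi_2 > \frac{\pi^2}{8}\xi_1^2$, i.e. $a > \frac{\pi^2 m}{8q}$. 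For $q \geqslant 2$ and $m \leqslant 4$ the right side is at most $\frac{\pi^2}{4} < 12$, so this holds. The intermediate value theorem then yields a zero $\delta \in (\delta_-, 0)$, which is unique by Lemma~\ref{lem:Taylor expansion of collision angle}. Regularity follows exactly as in that lemma: $\delta > \delta_-$ gives $\theta_2 < \pi/2$, and $\delta < 0$ gives $\theta_1 < \pi/2$. From~\eqref{eq:orbits:sine bound} we also obtain the a priori bound $\abs{\delta} \leqslant \frac{2}{a}\abs{F(0)} \leqslant \frac{\pi^2}{4a q^2} = 2c_3 q^{-2}$.

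\emph{Asymptotics with uniform constants.} The identity $H(\delta) = -a\sin\delta + \sin(\theta_1-\delta) - \sin(\theta_2+\delta)$ together with the angle relations gives $\theta_1 - \delta = \frac{\pi}{2} - \varphi_1$ and $\theta_2 + \delta = \frac{\pi}{2} - \varphi_2$. Hence the equation $H = 0$ reads exactly
\[
a\sin\delta = \cos\varphi_1 - \cos\varphi_2, \qquad \varphi_1 = \xi_1\bigl(\tfrac{\pi}{2}+\delta\bigr),\ \varphi_2 = \xi_2\bigl(\tfrac{\pi}{2}-\delta\bigr).
\]
This exact form is what makes uniform control possible. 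Write $\cos\varphi_1 - \cos\varphi_2 = -\frac{1}{2}(\varphi_1^2-\varphi_2^2) + R$, where $\abs{R} \leqslant \frac{1}{24}(\varphi_1^4+\varphi_2^4) = \mathcal{O}(q^{-4})$ with an absolute constant, since $\abs{\delta} \leqslant 1$. Next expand
\[
\varphi_1^2 - \varphi_2^2 = \tfrac{\pi^2}{4}(\xi_1^2-\xi_2^2) + \pi\delta(\xi_1^2+\xi_2^2) + \delta^2(\xi_1^2-\xi_2^2).
\]
Using $\abs{\delta} \lesssim c_3 q^{-2}$, the last two terms are $\mathcal{O}(c_3 q^{-4})$, and $\sin\delta = \delta + \mathcal{O}(\delta^3)$. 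Dividing by $a$ gives
\[
\delta = -\tfrac{\pi^2}{8a}\,\frac{1-m^{-2}}{q^2} + \mathcal{O}\bigl(a^{-1}q^{-4}\bigr).
\]
The error is absolute because $a^{-1} = \frac{8}{\pi^2}c_3$, and the products $c_3 \cdot a^{-1}$ carry a harmless extra factor $a^{-1} \leqslant 1/12$. Substituting into $\varepsilon_1 = \delta/q$ and $\varepsilon_2 = -\delta/(mq)$ gives the stated formulas with error $\mathcal{O}(c_3 q^{-5})$. The bound $\abs{\varepsilon_1}+\abs{\varepsilon_2} \leqslant C_0 c_3 q^{-3}$ follows at once from $\abs{\delta} \leqslant 2c_3q^{-2}$.

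\emph{Main obstacle.} The delicate point is keeping every error term of the form $c_3\cdot(\text{absolute})\cdot q^{-5}$, with no hidden dependence on $a$ or on the smallness of $1/q$. This is why we avoid the implicit-function expansion and work with the exact identity $a\sin\delta = \cos\varphi_1 - \cos\varphi_2$, bootstrapped once from the a priori bound on $\delta$. One must also check that the remainder $R$ is $\mathcal{O}(q^{-4})$ with an absolute constant, so that dividing by $a$ produces a factor of $c_3$.
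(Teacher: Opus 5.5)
Your proof is correct and follows essentially the paper's route. Existence comes from the intermediate value theorem on $(\delta_-, 0)$, the a priori bound $\abs{\delta} = \mathcal{O}(c_3\, q^{-2})$ comes from~\eqref{eq:orbits:sine bound}, and the expansion is a first-order linearization of $H$ at $\delta = 0$ with absolute constants. The only differences are technical: you certify $H(\delta_-) > 0$ through the integrated monotonicity bound~\eqref{eq:orbits:integrated}, as in Lemma~\ref{lem:short orbit control}, rather than by evaluating $F(\delta_-)$ geometrically, and you expand the exact identity $a \sin\delta = \cos\varphi_1 - \cos\varphi_2$ instead of applying the mean value theorem with the derivative formula~\eqref{eq:orbits:dH}.
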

\begin{proof}
	At the lower endpoint $\delta_- \define -\frac{\xi_2}{1 - \xi_2} \frac{\pi}{2} = -\frac{\pi}{2(mq - 1)}$ of the admissible interval we have $\theta_2 = \frac{\pi}{2}$, so
	\[
		F(\xi_1, \xi_2, \delta_-)
		= \sin\theta_1 - 1 + \tan\varrho\, \parentheses[\big]{ a + \cos\theta_1 },
		\qquad
		\varrho \define -\delta_- = \frac{\pi}{2(mq - 1)}.
	\]
	Since $mq - 1 \geqslant q$, we have $\varrho \leqslant \frac{\pi}{2q}$, so the angle $\alpha \define \frac{\pi}{2} - \theta_1 = \frac{\pi}{2q} + \varrho \parentheses[\big]{1 - \frac{1}{q}}$ lies in $\bigl[0, \frac{\pi}{q}\bigr]$, whence
	\[
		1 - \sin\theta_1
		= 1 - \cos\alpha
		\leqslant \frac{\alpha^2}{2}
		\leqslant \frac{\pi^2}{2 q^2};
	\]
	moreover, we have $\tan\varrho > \varrho \geqslant \frac{\pi}{2mq} \geqslant \frac{\pi}{8q}$.
	Therefore
	\[
		F(\xi_1, \xi_2, \delta_-)
		> -\frac{\pi^2}{2 q^2} + \frac{a \pi}{8 q}
		\geqslant 0,
	\]
	because $a \geqslant 12 > 2\pi \geqslant \frac{4\pi}{q}$.
	Since $F(\xi_1, \xi_2, 0) < 0$ by~\eqref{eq:orbits:sign at zero}, the intermediate value theorem gives a zero $\delta \in (\delta_-, 0)$ of $F$; every point of this interval is admissible, so the orbit $\Gamma^{q, mq}$ exists.
	It is regular, because the interval is open: $\delta > \delta_-$ gives $\theta_2 < \frac{\pi}{2}$, and $\delta < 0$ gives $\theta_1 < \frac{\pi}{2}$.

	At $\delta = 0$ the angles equal $\theta_i^0 \define \frac{\pi}{2}(1 - \xi_i)$, so $0 < \theta_2^0 - \theta_1^0 = \frac{\pi}{2} \parentheses[\big]{\xi_1 - \xi_2} \leqslant \frac{\pi}{2q}$, while the bound $\cos t \leqslant \frac{\pi}{2} - t$ on $\bigl[0, \frac{\pi}{2}\bigr]$ gives $\cos\theta^* \leqslant \cos\theta_1^0 \leqslant \frac{\pi}{2q}$ for each $\theta^* \in [\theta_1^0, \theta_2^0]$.
	The mean value theorem applied to the sine thus gives
	\[
		\abs[\big]{F(\xi_1, \xi_2, 0)}
		= \sin\theta_2^0 - \sin\theta_1^0
		\leqslant \frac{\pi^2}{4 q^2}.
	\]
	The bound~\eqref{eq:orbits:sine bound}, together with $\abs{\delta} \leqslant \frac{\pi}{2} \abs{\sin\delta}$, gives
	\begin{equation}\label{eq:orbits:apriori delta}
		\abs{\delta}
		\leqslant \frac{2}{a} \abs[\big]{F(\xi_1, \xi_2, 0)}
		\leqslant \frac{\pi^2}{2 a q^2}
		= 4 c_3\, q^{-2}.
	\end{equation}

	Since $a \geqslant 12$, the parameter $c_3 = \frac{\pi^2}{8a}$ satisfies $c_3 \leqslant \frac{\pi^2}{96} < 1$.
	Using the definition~\eqref{eq:orbits:F} of $F$ and the Taylor expansion of the cosine, we obtain
	\[
		F(\xi_1, \xi_2, 0)
		= \cos\frac{\pi}{2q} - \cos\frac{\pi}{2mq}
		= -\frac{\pi^2}{8} \bigl(1 - m^{-2}\bigr)\, q^{-2} + \mathcal{O}(q^{-4}).
	\]
	Since $H(\xi_1, \xi_2, \delta) = 0$ and $H(\xi_1, \xi_2, 0) = F(\xi_1, \xi_2, 0)$, the mean value theorem applied to the function $\delta' \mapsto H(\xi_1, \xi_2, \delta')$ on $[\delta, 0]$ yields a point $\delta^{*} \in (\delta, 0)$ such that
	\[
		\delta = -\frac{F(\xi_1, \xi_2, 0)}{\partial_\delta H(\xi_1, \xi_2, \delta^{*})}.
	\]
	In particular, \eqref{eq:orbits:apriori delta} shows that $\abs{\delta^{*}} \leqslant 4 c_3\, q^{-2}$.
	To estimate the derivative in the denominator, recall from~\eqref{eq:orbits:dH} that $\partial_\delta H = -a \cos\delta - \xi_1 \sin\varphi_1 - \xi_2 \sin\varphi_2$.
	Here the bound on $\abs{\delta^{*}}$ yields $\cos\delta^{*} = 1 + \mathcal{O}\parentheses[\big]{(\delta^{*})^{2}} = 1 + \mathcal{O}\bigl(c_3^{2}\, q^{-4}\bigr)$.
	Moreover, since $\varphi_i \leqslant \pi \xi_i$, we have $0 \leqslant \xi_1 \sin\varphi_1 + \xi_2 \sin\varphi_2 \leqslant \pi \bigl( \xi_1^{2} + \xi_2^{2} \bigr) \leqslant 2 \pi\, q^{-2}$.
	Dividing both terms by $a$ and using $a^{-1} = \frac{8}{\pi^2}\, c_3$ together with $c_3 \leqslant 1$, we obtain
	\[
		\partial_\delta H(\xi_1, \xi_2, \delta^{*})
		= -a \parentheses[\big]{ 1 + \mathcal{O}(c_3\, q^{-2}) }.
	\]
	Substituting this estimate into the expression for $\delta$ and using the expansion of $F(\xi_1, \xi_2, 0)$, we deduce that
	\[
		\delta
		= a^{-1} F(\xi_1, \xi_2, 0) \parentheses[\big]{ 1 + \mathcal{O}(c_3\, q^{-2}) }
		= -c_3\, \bigl(1 - m^{-2}\bigr)\, q^{-2} + \mathcal{O}\bigl(c_3\, q^{-4}\bigr),
	\]
	where the cross term contributes $\mathcal{O}(c_3^2\, q^{-4}) = \mathcal{O}(c_3\, q^{-4})$ because $c_3 < 1$.
	In view of~\eqref{eq:orbits:angle corrections}, dividing this expansion by $q$ and by $-mq$ yields the desired formulas for $\varepsilon_1$ and $\varepsilon_2$.
	The bound on $\abs{\varepsilon_1} + \abs{\varepsilon_2}$ then follows immediately, because $\bigl(1 - m^{-2}\bigr) \bigl(1 + m^{-1}\bigr) < \frac{3}{2}$ and $q^{-5} \leqslant \frac{1}{4}\, q^{-3}$.
\end{proof}

%% file: section/Rigidity_Standard.tex
\section{The linearized isospectral operator and its kernel}
\label{sec:linearized isospectral operator}

\subsection{Function space and parametrization}
\label{sub:Function spaces for normal variation}

Recall from Subsection~\ref{sub:The construction of hybrid periodic orbit} that the disjoint union of the two circular arcs of the circular Bunimovich stadium $\Omega_{*}$ is identified with
\[
	S=S_1\cup S_2 \coloneqq
	\left\{s_1 \describe \ -\frac{\pi}{2}\leqslant s_1\leqslant \frac{\pi}{2} \right\}\cup 
	\left\{s_2 \describe \ -\frac{\pi}{2}\leqslant s_2\leqslant \frac{\pi}{2} \right\},
	\]
	so that $S_1 \cong S_2 \cong [-\tfrac{\pi}{2}, \tfrac{\pi}{2}]$.
	Throughout, for functions $\mathbf{u}_i$ on $S_i$ ($i \in \set{1, \, 2}$), we write $\mathbf{u} = \mathbf{u}_1 \cup \mathbf{u}_2$ for the function on $S$ whose restriction to $S_i$ equals $\mathbf{u}_i$; equivalently, we identify $\mathbf{u}$ with the pair $(\mathbf{u}_1, \, \mathbf{u}_2)$.

	We define
	\[
		L^1_{*}(S) \define \set[\bigg]{ \mathbf{u} = \mathbf{u}_1 \cup \mathbf{u}_2 \describe \mathbf{u}_i \in L^1(S_i), \, \mathbf{u}_i(-s_i) = \mathbf{u}_{i}(s_i), \, \int \mathbf{u}_1\, \mathrm{d}s_1 + \int \mathbf{u}_2\, \mathrm{d}s_2 = 0 }.
	\]
	Equivalently,
	\[
		L^1_{*}(S) = \set[\bigg]{ \mathbf{u} = \mathbf{u}_1 \cup \mathbf{u}_2 \describe \mathbf{u}_i \in L^1(S_i) \text{ even for } i \in \set{1, \, 2}, \, \widehat{u}_{1, 0} + \widehat{u}_{2, 0} = 0 },
	\]
	where $\widehat{u}_{i, j}$ denotes the $j$-th Fourier coefficient of $\mathbf{u}_i$, so that $\mathbf{u}_i$ has the Fourier series $\sum_{j = 0}^{+\infty} \widehat{u}_{i, j}\, \mathbf{e}_{i, j}$.
Here the Fourier basis elements $\mathbf{e}_{i, j}$ are defined by
\begin{equation} \label{eq:def of Fourier basis e_ij}
	\mathbf{e}_{i, j}(s_i) \define \cos(2j s_i) \qquad \text{for each } i \in \set{1, \, 2} \text{ and each } j \in \n_0,
\end{equation}
and the Fourier coefficients of $\mathbf{u}_i$ by
\[
	\widehat{u}_{i, 0} \define \frac{1}{\pi} \int_{S_i} \mathbf{u}_i\, \mathrm{d}s_i
	\qquad \text{and} \qquad
	\widehat{u}_{i, j} \define \frac{2}{\pi} \int_{S_i} \mathbf{u}_i\, \mathbf{e}_{i, j}\, \mathrm{d}s_i \quad \text{for } j \in \n.
\]
The factor $2$ in the Fourier basis $\cos(2j s_i)$ reflects that both $S_1$ and $S_2$ have length $\pi$.

\subsection{The class \texorpdfstring{$\mathcal{M}^{r}$}{M\string^r} and the deformation function}
\label{sub:class Mr and deformation function}

For $a > 0$, let $\Omega_{*} \subset \real^{2}$ be the circular Bunimovich stadium, the convex domain bounded by the two horizontal flat edges of length $a$
\[
	E_{\mathrm{top}} \define [-a/2, \, a/2] \times \set{+1}
	\qquad \text{and} \qquad
	E_{\mathrm{bot}} \define [-a/2, \, a/2] \times \set{-1},
\]
together with the left and right unit semicircles centred at $(\pm a/2, \, 0)$, joined to $E_{\mathrm{top}} \cup E_{\mathrm{bot}}$ with $C^{1}$ tangency at the four junction points $(\pm a/2, \, \pm 1)$.
Throughout this article, we fix $r = 4$.

\begin{definition}[The class $\mathcal{M}^{r}$]\label{def:Mr}
	A planar domain $\Omega$ belongs to $\mathcal{M}^{r}$ if
	\begin{enumerate}
		\smallskip
		\item[\textup{(M1)}] $\partial \Omega = E_{\mathrm{top}} \cup E_{\mathrm{bot}} \cup \Gamma_{1} \cup \Gamma_{2}$, where $\Gamma_{1}$ and $\Gamma_{2}$ are simple $C^{r}$ arcs of positive curvature joining, respectively, $(-a/2, +1)$ to $(-a/2, -1)$ on the left and $(+a/2, +1)$ to $(+a/2, -1)$ on the right, and the domain $\Omega$ is convex;
		\smallskip
		\item[\textup{(M2)}] at each of the four junction points, the arc $\Gamma_{i}$ meets the adjacent flat edge with $C^{1}$ tangency;
		\smallskip
		\item[\textup{(M3)}] $\Omega$ is symmetric under reflection across the horizontal axis $\set{y = 0}$.
	\end{enumerate}
\end{definition}

Both $\Omega_{*}$ and $\mathcal{M}^{r}$ depend on $a$, which we suppress from the notation.
By conditions~(M1) and~(M3), every domain in $\mathcal{M}^{r}$ shares the same junction points and symmetry axis. We fix their common position as a normalization, free to do so because the length spectrum is invariant under rigid motions.

Consider a $C^{1}$ one-parameter family $\set{\Omega_{\tau}}_{\abs{\tau} \leqslant 1}$ in $\mathcal{M}^{r}$ with $\Omega_{0} = \Omega_{*}$.
For each $\tau$, we define the deformation function $\mathbf{n}(\tau, \cdot \,) \colon S \to \real$ of this family at $\Omega_{\tau}$ by
\begin{equation}\label{eq:def:deformation function}
	\mathbf{n}(\tau, s) \define
	\begin{cases}
		\left\langle \partial_{\tau}\gamma_{1, \tau}(s_1), \, N_{1, \tau}(s_1)\right\rangle, & s = s_1 \in S_1,\\
		\left\langle \partial_{\tau}\gamma_{2, \tau}(s_2), \, N_{2, \tau}(s_2)\right\rangle, & s = s_2 \in S_2,
	\end{cases}
\end{equation}
where, for each $i \in \set{1, \, 2}$, $\gamma_{i, \tau} \colon S_i \to \partial\Omega_{\tau}$ is the constant-speed parametrization of the $i$-th arc of $\Omega_{\tau}$, oriented so that $\gamma_{i, \tau}(-\pi/2)$ and $\gamma_{i, \tau}(\pi/2)$ are the junction points on $E_{\mathrm{bot}}$ and $E_{\mathrm{top}}$, respectively, and $N_{i, \tau}$ denotes the inward unit normal along $\gamma_{i, \tau}$.
The speed equals $\ell(\Gamma_{i, \tau})/\pi$; at $\tau = 0$ it equals $1$, so $\gamma_{i, 0}$ parametrizes the unit semicircle by arclength.
That the family is $C^{1}$ means that $\tau \mapsto \gamma_{i, \tau}$ is $C^{1}$ with values in $C^{r}(S_i, \real^{2})$ for each $i \in \set{1, \, 2}$.

Where no parameter is named, the \emph{deformation function} is the one at $\Omega_{*}$, namely $\mathbf{n} = \mathbf{n}(0, \cdot \,) \colon S \to \real$, which we write as
\[
	\mathbf{n}(s) =
	\begin{cases}
		\mathbf{n}_1(s_1),  &  s = s_1 \in S_1; \\
		\mathbf{n}_2(s_2),  &  s = s_2 \in S_2,
	\end{cases}
\]
where $\mathbf{n}_1(s_1)$ and $\mathbf{n}_2(s_2)$ represent the normal variation on the left and right arcs, respectively.

For every such family, $\mathbf{n}_1$ and $\mathbf{n}_2$ satisfy Condition~(H1) below; if the family is moreover dynamically isospectral, they satisfy Condition~(H2) as well:
\begin{enumerate}
	\smallskip

	\item[(H1)]
	for each $i \in \set{1, \, 2}$, identifying $S_i \cong [-\tfrac{\pi}{2}, \tfrac{\pi}{2}]$ via arclength, the function $\mathbf{n}_i$ is a $C^r$ even function on $[-\tfrac{\pi}{2}, \tfrac{\pi}{2}]$ satisfying
	\[
		\mathbf{n}_i \parentheses[\Big]{ \pm \frac{\pi}{2} } = 0
		\qquad \text{and} \qquad
		\mathbf{n}_i' \parentheses[\Big]{ \pm \frac{\pi}{2} } = 0;
	\]

	\smallskip

	\item[(H2)]
	$\int \mathbf{n}_1(s_1)\, \mathrm{d}s_1 + \int \mathbf{n}_2(s_2)\, \mathrm{d}s_2 = 0$.
\end{enumerate}

Indeed, by~(M3), the reflection $R \colon (x, y) \mapsto (x, -y)$ maps the arc $\Gamma_{i, \tau}$ to itself and exchanges its two endpoints, so $s_i \mapsto R\, \gamma_{i, \tau}(-s_i)$ is again a constant-speed parametrization of $\Gamma_{i, \tau}$ with the same endpoints and orientation, and hence coincides with $\gamma_{i, \tau}$.
Since $R$ is a symmetry of $\Omega_{\tau}$, it carries inward normals to inward normals, so $N_{i, \tau}(-s_i) = R\, N_{i, \tau}(s_i)$; as $R$ is orthogonal, it follows that $\mathbf{n}(\tau, -s_i) = \mathbf{n}(\tau, s_i)$, and each $\mathbf{n}_i$ is even.
The junction points are fixed by~(M1), so $\partial_{\tau} \gamma_{i, \tau}(\pm \pi/2) = 0$, and $\mathbf{n}_{i}(\pm \pi/2) = 0$.
By~(M2), the tangent vector $\partial_{s_i} \gamma_{i, \tau}(\pm \pi/2)$ lies in the fixed direction of the adjacent flat edge for every $\tau$, so its $\tau$-derivative is orthogonal to $N_{i, 0}(\pm \pi/2)$; here $\partial_{\tau} \partial_{s_i} \gamma_{i, \tau} = \partial_{s_i} \partial_{\tau} \gamma_{i, \tau}$, as $\tau \mapsto \gamma_{i, \tau}$ is $C^{1}$ with values in $C^{r}$.
Hence, differentiating~\eqref{eq:def:deformation function} in $s_i$ and using $\partial_{\tau} \gamma_{i, \tau}(\pm \pi/2) = 0$, we obtain $\mathbf{n}_{i}'(\pm \pi/2) = \left\langle \partial_{s_i} \partial_{\tau} \gamma_{i, 0}(\pm \pi/2), \, N_{i, 0}(\pm \pi/2) \right\rangle = 0$.
Finally, $\mathbf{n}_{i} \in C^{r}(S_i)$, since $\partial_{\tau} \gamma_{i, \tau}\big|_{\tau = 0} \in C^{r}(S_i, \real^{2})$ and $N_{i, 0}$ is smooth.
This proves~(H1) for every $C^{1}$ family in $\mathcal{M}^{r}$.
Condition~(H2) is derived in Lemma~\ref{lem:standard stadium circular orbits} from the vanishing of the linearized lengths of the circular orbits $\Gamma^{q, q}$.

We denote by $C^r_*(S)$ the space of all functions $\mathbf{n} = \mathbf{n}_1\cup \mathbf{n}_2$ such that $\mathbf{n}_1$ and $\mathbf{n}_2$ satisfy Conditions~(H1) and (H2).
Note that $C^r_{*}(S) \subseteq L^1_{*}(S)$.

\subsection{Linear functionals associated with hybrid periodic orbits}
\label{sub:Linear functionals and linearized isospectral operators associated with hybrid periodic orbits}

For each continuous function $\mathbf{u} = \mathbf{u}_1 \cup \mathbf{u}_2$ on $S$, we define
\begin{equation}    \label{eq:linear functionals associated with hybrid periodic orbits}
	\ell^{p_1, p_2}(\mathbf{u}) \define \sum_{(s, \varphi)\in \Gamma^{p_1, p_2}} \mathbf{u}(s)\sin \varphi.
\end{equation}
Recall that the coordinates of $\Gamma^{p_1, p_2}$ are $\parentheses[\big]{ s_i^{p_1, p_2}(k), \, \varphi_i^{p_1, p_2}(k) }$ for $1 \leqslant k \leqslant p_i$, where $i = 1$ and $i = 2$ index the left and right arcs, respectively.
In this notation, \eqref{eq:linear functionals associated with hybrid periodic orbits} can be rewritten as $\ell^{p_1, p_2}(\mathbf{u}) = \ell_1^{p_1, p_2}(\mathbf{u}_1) + \ell_2^{p_1, p_2}(\mathbf{u}_2)$, where
\begin{equation}    \label{eq:linear functionals associated with hybrid periodic orbits:each side}
	\ell_i^{p_1, p_2}(\mathbf{u}_i) \define \sum_{k = 1}^{p_i} \mathbf{u}_i(s_i^{p_1, p_2}(k)) \sin (\varphi_i^{p_1, p_2}(k)).
\end{equation}
Subsection~\ref{sub:The construction of hybrid periodic orbit} normalizes $s_1$ and $s_2$ to vanish at the midpoints of the two arcs, as $\gamma_{1, 0}$ and $\gamma_{2, 0}$ do, but leaves their directions unprescribed.
The parameters $s_i^{p_1, p_2}(k)$ are symmetric about the origin, and the angles $\varphi_i^{p_1, p_2}(k)$ do not depend on $k$; reversing the direction of $s_i$ therefore leaves $\ell_i^{p_1, p_2}$ unchanged.

\begin{lemma}	\label{lem:length spectrum null}
	The length spectrum $\mathcal{L}(\Omega_{*})$ has zero Lebesgue measure.
\end{lemma}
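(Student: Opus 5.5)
The idea is to show that $\mathcal{L}(\Omega_{*})$ is a countable union of null sets, in fact a countable set up to one-parameter families of equal length. A closed billiard trajectory of period $n$ is determined by its finite sequence of collision points. We classify periodic orbits by their \emph{combinatorial itinerary}: the cyclic word recording, at each bounce, which of the four boundary pieces $E_{\mathrm{top}}, E_{\mathrm{bot}}, \Gamma_1, \Gamma_2$ is hit. There are countably many such words. It therefore suffices to show that, for a fixed itinerary $w$, the set of lengths of periodic orbits realizing $w$ is Lebesgue-null.

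Fix $w$ of length $n$ and consider the set $X_w$ of admissible configurations $(x_1,\dots,x_n)$, with $x_k$ on the $k$-th prescribed piece. Each piece is parametrized by a real-analytic map, namely an affine map on the edges and $s\mapsto$ a point on the unit circle on the arcs. The length function $L(x)=\sum_k |x_{k+1}-x_k|$ is real-analytic on the open set where consecutive points are distinct. Periodic orbits with itinerary $w$ are critical points of $L$ on the product of the open pieces, since the reflection law is exactly the vanishing of the partial derivatives. Collisions at endpoints (junction points) must be handled separately. There we simply include, for each itinerary, the finitely many choices of which collisions sit at a junction point and freeze those coordinates, which only reduces dimension. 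Then $\mathcal{L}(\Omega_*)\cap\{\text{itinerary }w\}\subseteq L(\mathrm{Crit}(L|_{M}))$ for countably many real-analytic manifolds $M$ of dimension at most $n$. Consecutive coincident points or degenerate segments are excluded since a billiard trajectory has nonzero-length segments. Trajectories that slide along a flat edge or graze tangentially are also excluded, because a tangential segment has length zero at the contact point.

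The conclusion follows from the Morse--Sard theorem in its analytic (or $C^\infty$) form. The set of critical values of a smooth function $f\colon M\to\real$ on a finite-dimensional manifold has measure zero. This requires $C^{\dim M}$ regularity, which holds since $L$ is real-analytic. Alternatively one can invoke the fact that a real-analytic function is constant on each connected component of its critical set, together with the countability of components of a semianalytic set. Taking the countable union over itineraries and boundary choices gives that $\mathcal{L}(\Omega_*)$ is null.

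I expect the main obstacle to be bookkeeping rather than analysis. The key issue is to make sure every closed billiard trajectory is genuinely a critical point of a smooth length function on some finite-dimensional manifold in our countable list. Two cases need care: orbits hitting junction points, where the boundary is only $C^1$, and orbits whose segments run along a flat edge. For the junction issue, freezing the coordinate as described above works, because the restricted length is still smooth in the remaining variables and the reflection law at the free points still gives criticality. The flat-edge degeneracy is ruled out by convexity, since a chord between two points of the same flat edge lies in the boundary and is not a billiard segment. I would also note that the parallel-edge bouncing-ball family produces a continuum of orbits but only countably many lengths $2k$, consistent with Sard.
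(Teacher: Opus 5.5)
Your proposal is correct and is essentially the paper's proof. The paper likewise splits $\partial\Omega_{*}$ into four open real-analytic pieces and the four junction points, and encodes each closed trajectory by an assignment $\iota\colon \z/N\z \to \{1, \ldots, 8\}$; this is the same as your itinerary together with the frozen junction collisions. It then notes that the length is a critical value of the real-analytic length function in the free variables, applies Sard's theorem (real-analyticity gives the required $C^{n}$ regularity for every $n$), and treats the all-junction case as a single point, before taking the countable union. Your side remark on sliding or grazing segments is unnecessary, and its stated justification is off: the reflection law at a vertex, including a tangential pass-through, already makes the corresponding partial derivative of the length vanish.
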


\begin{proof}
	The boundary $\partial\Omega_{*}$ is the disjoint union of its four junction points and the four relatively open pieces cut out by them.
	Let $\kappa_1, \ldots, \kappa_4$ be real-analytic parametrizations of these open pieces, defined on open intervals, and let $\kappa_5, \ldots, \kappa_8$ be the four junction points.
	Fix $N \in \n$ together with a map $\iota \colon \z/N\z \to \set{1, \, \ldots, \, 8}$ prescribing a piece for each of $N$ successive collisions, and consider
	\[
		F_{\iota} \colon (s_k)_{\iota(k) \leqslant 4} \longmapsto \sum_{k \in \z/N\z} \abs[\big]{ \kappa_{\iota(k + 1)}(s_{k + 1}) - \kappa_{\iota(k)}(s_k) },
	\]
	where $\kappa_{\iota(k)}(s_k)$ is to be read as the point $\kappa_{\iota(k)}$ when $\iota(k) \geqslant 5$.
	Thus $F_{\iota}$ has one variable for each $k$ with $\iota(k) \leqslant 4$, and it is real-analytic on the open set where consecutive points are distinct.
	A closed billiard trajectory with $N$ collisions distributed according to $\iota$ obeys the reflection law at each collision lying on one of the four open pieces, and its length is therefore a critical value of $F_{\iota}$.
	Sard's theorem requires class $C^{n}$ of a real-valued function of $n$ variables, and $F_{\iota}$ is real-analytic, so its critical values form a null set however large $N$ is.
	If $\iota$ takes values in $\set{5, \, \ldots, \, 8}$ only, then $F_{\iota}$ has no variables at all, and the corresponding set of lengths is a single point.
	Since there are countably many pairs $\parentheses[\big]{ \juxtapose{N}{\iota} }$, the set $\mathcal{L}(\Omega_{*})$ is contained in a countable union of null sets.
\end{proof}

The convention adopted here slightly differs from that of~\cite{MR3665005}, where the length spectrum is enlarged by the integer multiples of the lengths of the closed billiard trajectories and of $\abs{\partial\Omega}$.
A countable union of dilates of a null set is null, so Lemma~\ref{lem:length spectrum null} holds for that larger set as well, and nothing below depends on the choice.

In the following, we fix a pair $\juxtapose{p_1}{p_2} \in \n$ such that the hybrid periodic orbit $\Gamma^{p_1, p_2}$ exists and is regular, and we set $N \define p_1 + p_2$.
We describe $\Gamma^{p_1, p_2}$ as a non-degenerate critical point of a length function; by the implicit function theorem, such a critical point persists under small deformations of $\Omega_{*}$, and Lemma~\ref{lem:isospectral orbit functionals} below rests on this persistence.

Let $x_1, \ldots, x_N$ be the collision points of $\Gamma^{p_1, p_2}$ in the order visited, so that $x_1, \ldots, x_{p_1}$ lie on the left arc and $x_{p_1 + 1}, \ldots, x_N$ on the right arc; all indices are read cyclically modulo $N$.
In the orientation fixed in Subsection~\ref{sub:class Mr and deformation function}, the up--down symmetry makes the left cycle run in the direction of increasing $s_1$ and the right cycle in the direction of decreasing $s_2$, so that the parameters of $x_{p_1 + 1}, \ldots, x_N$ are the numbers in~\eqref{eq: Gamma qi s varphi-} in reverse order.
Nothing below uses more about the ordering than which arc carries each $x_k$.
For each $k$, let $i(k) \in \set{1, \, 2}$ be the index of the arc carrying $x_k$, let $\varphi_k \define \varphi_{i(k)}^{p_1, p_2}$ be the collision angle at $x_k$, and let $t_k \define \abs{x_{k + 1} - x_k}$ and $\nu_k \define (x_{k + 1} - x_k) / t_k$ be the length of the $k$-th side of $\Gamma^{p_1, p_2}$ and the unit vector along it.
Since the arcs have unit radius, we have $t_k = 2 \sin\varphi_k$ whenever $x_k$ and $x_{k + 1}$ lie on the same arc.
Since $\Gamma^{p_1, p_2}$ is regular, each $x_k$ lies in the interior of an arc, and so $x_k = \gamma_{i(k), 0}(s_k^{*})$ for a unique $s_k^{*} \in (-\pi/2, \pi/2)$; we set $s^{*} \define (s_1^{*}, \ldots, s_N^{*})$.
Let $T_k \define \partial_{s} \gamma_{i(k), 0}(s_k^{*})$ and $N_k \define N_{i(k), 0}(s_k^{*})$ be the unit tangent vector and the inward unit normal at $x_k$.
By the reflection law at $x_k$, the vector $\nu_k$ is the mirror image of $\nu_{k - 1}$ in the tangent line at $x_k$, so that $\nu_k = \nu_{k - 1} - 2 \langle \nu_{k - 1}, N_k \rangle N_k$; moreover, it points into $\Omega_{*}$ and makes the angle $\varphi_k$ with this line.
Hence
\begin{equation}\label{eq:lem:isospectral:reflection}
	\big\langle \nu_k, \, N_k \big\rangle = \sin\varphi_k = - \big\langle \nu_{k - 1}, \, N_k \big\rangle
	\qquad \text{and} \qquad
	\nu_{k - 1} - \nu_k = -2 \sin\varphi_k \, N_k .
\end{equation}
Since $\sin\varphi_k > 0$, it follows that $\nu_{k - 1} \neq \nu_k$ for each $k$.

Let $U$ be the set of all $s = (s_1, \ldots, s_N) \in (-\pi/2, \pi/2)^{N}$ with $s_k \neq s_{k + 1}$ whenever $i(k) = i(k + 1)$.
Then $U$ is open and contains $s^{*}$.
For $s \in U$, the points $\gamma_{i(k), 0}(s_k)$ and $\gamma_{i(k + 1), 0}(s_{k + 1})$ are distinct for each $k$, since the two arcs are disjoint and each $\gamma_{i, 0}$ is injective; hence the length
\begin{equation}\label{eq:lem:isospectral:length function}
	L(s) \define \sum_{k = 1}^{N} \abs[\big]{ \gamma_{i(k + 1), 0}(s_{k + 1}) - \gamma_{i(k), 0}(s_{k}) }
	\qquad (s \in U)
\end{equation}
of the closed polygon with vertices $\gamma_{i(k), 0}(s_k)$ is a smooth function on $U$.
Let $\nu_k(s)$ be the unit vector from the $k$-th to the $(k + 1)$-st vertex of this polygon, so that $\nu_k(s^{*}) = \nu_k$.
Since the $k$-th vertex is the initial point of the $k$-th side and the terminal point of the $(k - 1)$-st side, differentiating~\eqref{eq:lem:isospectral:length function} gives
\begin{equation}\label{eq:lem:isospectral:first derivative}
	\frac{\partial L}{\partial s_k}(s)
	= \big\langle \partial_{s} \gamma_{i(k), 0}(s_k), \, \nu_{k - 1}(s) - \nu_k(s) \big\rangle
	\qquad (s \in U).
\end{equation}
By~\eqref{eq:lem:isospectral:reflection}, the right-hand side vanishes at $s^{*}$; thus $s^{*}$ is a critical point of $L$.

\begin{lemma}\label{lem:hybrid orbit nondegenerate}
	The critical point $s^{*}$ of the length function $L$ is non-degenerate, that is, the Hessian $\nabla^{2} L(s^{*})$ is non-singular.
\end{lemma}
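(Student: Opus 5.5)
The plan is to show that $\nabla^{2} L(s^{*})$ is \emph{negative definite}, by writing the second variation of $L$ as a sum of one contribution per side of the polygon and bounding each contribution by elementary inequalities. Fix $v = (v_1, \ldots, v_N) \in \real^{N}$ and write $x_k(\tau) = \gamma_{i(k), 0}(s_k^{*} + \tau v_k)$.

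First I would compute $\frac{d^2}{d\tau^2}\big|_{\tau=0} L$ side by side. For the $k$-th side, with $w = x_{k+1} - x_k$, the second derivative of $\abs{w}$ equals
\[
	\frac{\abs{\dot w}^{2} - \langle \dot w, \nu_k\rangle^{2}}{t_k} + \langle \ddot w, \nu_k \rangle .
\]
The first term is the squared component of $\dot w$ perpendicular to $\nu_k$, divided by $t_k$. The component of $T_k v_k$ perpendicular to $\nu_k$ has size $\sin\varphi_k\, \abs{v_k}$, and similarly at $x_{k+1}$. So the first term equals $\frac{1}{t_k}\bigl(\sin\varphi_k\, v_k \pm \sin\varphi_{k+1}\, v_{k+1}\bigr)^{2}$, with a sign fixed by the orientations, which I do not need to track. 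Since the arcs are unit circles, $\ddot x_k = N_k v_k^{2}$. Hence, by~\eqref{eq:lem:isospectral:reflection}, the terms $\langle \ddot w, \nu_k\rangle$ summed over the two sides at $x_k$ give $\langle N_k, \nu_{k-1} - \nu_k\rangle v_k^{2} = -2\sin\varphi_k\, v_k^{2}$. Altogether,
\[
	\langle \nabla^{2} L(s^{*}) v, v\rangle = \sum_{k=1}^{N} \frac{\bigl(\sin\varphi_k\, v_k \pm \sin\varphi_{k+1}\, v_{k+1}\bigr)^{2}}{t_k} - 2 \sum_{k=1}^{N} \sin\varphi_k\, v_k^{2}.
\]

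Next I would bound each side term by $(x \pm y)^{2} \leqslant 2x^{2} + 2y^{2}$, which bounds the side's contribution by $\frac{2\sin^{2}\varphi_k}{t_k} v_k^{2} + \frac{2\sin^{2}\varphi_{k+1}}{t_k} v_{k+1}^{2}$, and credit each half to its endpoint. For a side joining two points of the same arc, $t_k = 2\sin\varphi_k = 2\sin\varphi_{k+1}$, so each endpoint receives exactly $\sin\varphi\, v^{2}$. For a crossing side, that is, $PQ$ or its mirror image, each endpoint $x_j$ receives $\frac{2\sin^{2}\varphi_j}{t_k} v_j^{2}$. The key geometric claim here is $t_k > 2\sin\varphi_j$ for both endpoints $x_j$ of a crossing side. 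To prove it, extend the side from $P$: the line through $P$ at angle $\varphi_1$ to the tangent leaves the closed unit disk centred at $O_1 = (-a/2, 0)$ after length exactly $2\sin\varphi_1$. The other endpoint $Q = (a/2 + \cos\theta_2, \pm\sin\theta_2)$ satisfies $\abs{Q - O_1}^{2} = a^{2} + 2a\cos\theta_2 + 1 > 1$, so it lies strictly outside that disk. Hence $t_k > 2\sin\varphi_1$, and symmetrically $t_k > 2\sin\varphi_2$, for every $a > 0$. The case $p_i = 1$ causes no difficulty: the single vertex has two crossing sides, and the bound is applied to each.

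Summing, every vertex $x_k$ receives at most $2\sin\varphi_k\, v_k^{2}$ from its two sides, with strict inequality at the endpoints of crossing sides whenever $v_k \neq 0$. This gives $\langle \nabla^{2}L(s^{*})v, v\rangle \leqslant 0$. The main obstacle is upgrading this to strict negativity when $v$ vanishes at the endpoints of the crossing sides but not elsewhere. I would handle it through the equality case of the same-arc inequalities: equality in $(x \pm y)^{2} \leqslant 2x^{2} + 2y^{2}$ forces $\abs{v_k} = \abs{v_{k+1}}$ along every same-arc side. Along each cycle of consecutive same-arc vertices, this propagates $\abs{v_k}$ to the endpoint vertex, where $v_k = 0$ by the strict crossing inequality. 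Hence equality forces $v = 0$, the Hessian is negative definite, and in particular non-singular. The sign conventions in the mixed terms never enter, since only the symmetric bound is used. If the computation of the $\langle \ddot w, \nu_k\rangle$ terms turns out to need more care with orientation, I would check it first against the circular orbit $\Gamma^{q,q}$ in a single disk, where the Hessian is an explicit circulant matrix.
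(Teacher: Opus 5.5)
Your proof is correct, and it takes a genuinely different route from the paper's. Your second-variation formula is the same quadratic form the paper derives; its substitution $Y_k=\chi_k\sin\varphi_k\,y_k$ absorbs your undetermined signs.

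\emph{What the paper does.} It turns the degeneracy question into a periodic second-order recursion. It shows that $\nabla^2L(s^*)$ is singular exactly when $(-1)^N$ is an eigenvalue of the monodromy matrix $M$. It then computes $\operatorname{tr}M$ in closed form, using two facts: the same-arc blocks $\Pi_i=-I+2u_iv_i^{\mathsf T}$ are parabolic, and the crossing chords satisfy $d>\sigma_1+\sigma_2$. This gives $(-1)^N\operatorname{tr}M\geqslant 2+4a^2$.

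\emph{What you do instead.} You bypass the monodromy altogether. The inequality $(x\pm y)^2\leqslant 2x^2+2y^2$ shows that each same-arc chord exactly balances the diagonal term at its endpoints; this is the variational counterpart of the parabolicity of $\Pi_i$. Each crossing chord strictly undercuts that term because $d>2\max\{\sigma_1,\sigma_2\}$. Your disk-exit argument proves this inequality cleanly for every $a>0$, and it is slightly stronger than the paper's $d>\sigma_1+\sigma_2$. The equality analysis along each cycle then upgrades $\leqslant 0$ to negative definiteness.

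\emph{What each buys.} Your argument is shorter and needs no sign bookkeeping. It also proves more: $s^*$ is a strict non-degenerate local maximum of $L$. The paper's computation instead gives quantitative hyperbolicity of the orbit's linearized return map. Its monodromy criterion also does not depend on the critical point being an extremum, so it would handle saddle-type orbits where your definiteness bound is unavailable.
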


\begin{proof}
	For $v \in \real^{2}$, let $v^{\perp}$ be the image of $v$ under the rotation of $\real^{2}$ through the angle $\pi/2$; then $\langle u, v^{\perp} \rangle = - \langle u^{\perp}, v \rangle$ for $u, v \in \real^{2}$.
	Let $\chi_k \in \set{-1, \, 1}$ be defined by $N_k = \chi_k T_k^{\perp}$.
	Then~\eqref{eq:lem:isospectral:reflection} gives
	\begin{equation}\label{eq:lem:isospectral:frame}
		\big\langle T_k, \, \nu_k^{\perp} \big\rangle = - \chi_k \big\langle N_k, \, \nu_k \big\rangle = - \chi_k \sin\varphi_k,
		\qquad
		\big\langle T_k, \, \nu_{k - 1}^{\perp} \big\rangle = \chi_k \sin\varphi_k .
	\end{equation}

	The function $L$ is the sum of the side lengths $h_k \define \abs{w_k}$, where
	\[
		w_k \define \gamma_{i(k + 1), 0}(s_{k + 1}) - \gamma_{i(k), 0}(s_k)
		\qquad \text{and} \qquad
		\nu_k(s) \define w_k / h_k.
	\]
	Hence $\partial_{s_k} h_k = - \langle \partial_{s} \gamma_{i(k), 0}(s_k), \, \nu_k(s) \rangle$ and $\partial_{s_{k + 1}} h_k = \langle \partial_{s} \gamma_{i(k + 1), 0}(s_{k + 1}), \, \nu_k(s) \rangle$.
	The derivative $\partial \nu_k(s)$, being orthogonal to $\nu_k(s)$, equals $h_k^{-1} \langle \partial w_k, \, \nu_k(s)^{\perp} \rangle \nu_k(s)^{\perp}$ for each variable.
	At $s^{*}$ we have $h_k = t_k$ and $\partial_{s}^{2} \gamma_{i(k), 0}(s_k^{*}) = N_k$, the arcs having unit curvature.
	Differentiating once more and using~\eqref{eq:lem:isospectral:reflection} and~\eqref{eq:lem:isospectral:frame} at the $k$-th and at the $(k + 1)$-st vertex, we obtain at $s^{*}$
	\[
		\begin{gathered}
			\partial_{s_k}^{2} h_k = \frac{\sin^{2}\varphi_k}{t_k} - \sin\varphi_k,
			\qquad
			\partial_{s_{k + 1}}^{2} h_k = \frac{\sin^{2}\varphi_{k + 1}}{t_k} - \sin\varphi_{k + 1}, \\
			\partial_{s_k} \partial_{s_{k + 1}} h_k = \chi_k \chi_{k + 1}\, \frac{\sin\varphi_k \sin\varphi_{k + 1}}{t_k} .
		\end{gathered}
	\]
	For $y \in \real^{N}$ set $Y_k \define \chi_k \sin\varphi_k \, y_k$.
	Since $\chi_k^{2} = 1$, the three formulas combine to
	\[
		\nabla^{2} h_k(s^{*})[y, \, y]
		= \frac{(Y_k + Y_{k + 1})^{2}}{t_k} - \frac{Y_k^{2}}{\sin\varphi_k} - \frac{Y_{k + 1}^{2}}{\sin\varphi_{k + 1}},
	\]
	and summing over $k$, each vertex being an endpoint of two sides, we obtain
	\begin{equation}\label{eq:lem:isospectral:hessian}
		\nabla^{2} L(s^{*})[y, \, y]
		= \sum_{k = 1}^{N} \frac{(Y_k + Y_{k + 1})^{2}}{t_k} - \sum_{k = 1}^{N} \frac{2 Y_k^{2}}{\sin\varphi_k} .
	\end{equation}

	Since $y \mapsto Y$ is a linear isomorphism of $\real^{N}$, the Hessian $\nabla^{2} L(s^{*})$ is singular if and only if the quadratic form in $Y$ on the right-hand side of~\eqref{eq:lem:isospectral:hessian} is degenerate, that is, if and only if its partial derivatives with respect to $Y_1, \ldots, Y_N$ have a common zero $Y \neq 0$.
	Computing these derivatives, we see that this is the case if and only if the linear system
	\begin{equation}\label{eq:lem:isospectral:jacobi}
		\frac{Y_{k - 1} + Y_k}{t_{k - 1}} + \frac{Y_k + Y_{k + 1}}{t_k} = \frac{2 Y_k}{\sin\varphi_k}
		\qquad (1 \leqslant k \leqslant N)
	\end{equation}
	has a non-trivial solution.
	Extend $t_k$ and $\varphi_k$ to all $k \in \z$ by $N$-periodicity, and consider~\eqref{eq:lem:isospectral:jacobi} as a recursion for sequences $(Y_k)_{k \in \z}$, imposed for all $k \in \z$; the solutions of the original system are then precisely the $N$-periodic solutions of this recursion.
	For a sequence $(Y_k)_{k \in \z}$ set $\eta_k \define (-1)^{k} Y_k$ and $\eta_k' \define (-1)^{k + 1} (Y_k + Y_{k + 1}) / t_k$.
	Then $\eta_{k + 1} = \eta_k + t_k \eta_k'$ by the definition of $\eta_k'$, and~\eqref{eq:lem:isospectral:jacobi} with $k + 1$ in place of $k$ is equivalent to $\eta_{k + 1}' = \eta_k' - 2 \eta_{k + 1} / \sin\varphi_{k + 1}$.
	Hence $(Y_k)_{k \in \z}$ solves the recursion if and only if
	\[
		\begin{pmatrix} \eta_{k + 1} \\ \eta_{k + 1}' \end{pmatrix}
		= B_k \begin{pmatrix} \eta_{k} \\ \eta_{k}' \end{pmatrix}
		\quad \text{for all } k \in \z,
		\qquad \text{where} \quad
		B_k \define
		\begin{pmatrix} 1 & 0 \\ -2/\sin\varphi_{k + 1} & 1 \end{pmatrix}
		\begin{pmatrix} 1 & t_k \\ 0 & 1 \end{pmatrix} .
	\]
	Since each $B_k$ is invertible, the map $(Y_k)_{k \in \z} \mapsto (\eta_1, \eta_1')$ is a linear isomorphism from the space of solutions of the recursion onto $\real^{2}$.
	A solution is $N$-periodic if and only if $(\eta_{k + N}, \eta_{k + N}') = (-1)^{N} (\eta_k, \eta_k')$ for all $k \in \z$, and since $B_{k + N} = B_k$, it suffices to require this for $k = 1$.
	As $(\eta_{N + 1}, \eta_{N + 1}')$ is the image of $(\eta_1, \eta_1')$ under the monodromy matrix $M \define B_N B_{N - 1} \cdots B_1$, we conclude that $\nabla^{2} L(s^{*})$ is singular if and only if $(-1)^{N}$ is an eigenvalue of $M$.
	Each factor of $M$ has determinant one, and so $\det M = 1$.
	If $\abs{\operatorname{tr} M} > 2$, then the eigenvalues of $M$ are real and of modulus different from one, and $(-1)^{N}$ is not among them.
	It therefore suffices to show that $\abs{\operatorname{tr} M} > 2$.

	Set $\sigma_i \define \sin \varphi_{i}^{p_1, p_2}$ and $m_i \define p_i - 1$ for $i \in \set{1, \, 2}$, and let $d$ be the common length of the two crossing chords of $\Gamma^{p_1, p_2}$, which are mirror images of each other across the symmetry axis.
	If $1 \leqslant k \leqslant p_1 - 1$, then $x_k$ and $x_{k + 1}$ lie on the left arc, so that $t_k = 2 \sigma_1$ and $\sin\varphi_{k + 1} = \sigma_1$; if $p_1 + 1 \leqslant k \leqslant N - 1$, then both lie on the right arc, so that $t_k = 2 \sigma_2$ and $\sin\varphi_{k + 1} = \sigma_2$.
	The remaining two sides, the $p_1$-th and the $N$-th, are the crossing chords, so that $t_{p_1} = t_N = d$, $\sin\varphi_{p_1 + 1} = \sigma_2$, and $\sin\varphi_{N + 1} = \sigma_1$.
	Grouping the factors of $M$ accordingly, we obtain
	\[
		M = X_1 \Pi_2^{\, m_2} X_2 \Pi_1^{\, m_1},
		\qquad
		X_i \define
		\begin{pmatrix} 1 & 0 \\ -2/\sigma_i & 1 \end{pmatrix}
		\begin{pmatrix} 1 & d \\ 0 & 1 \end{pmatrix},
		\qquad
		\Pi_i \define
		\begin{pmatrix} 1 & 0 \\ -2/\sigma_i & 1 \end{pmatrix}
		\begin{pmatrix} 1 & 2\sigma_i \\ 0 & 1 \end{pmatrix} .
	\]
	Set $u_i \define (1, \, -1/\sigma_i)^{\mathsf{T}}$ and $v_i \define (1, \, \sigma_i)^{\mathsf{T}}$.
	Then $\Pi_i = -I + 2 u_i v_i^{\mathsf{T}}$ and $v_i^{\mathsf{T}} u_i = 0$, so that $\Pi_i^{\, m} = (-1)^{m} \parentheses[\big]{ I - 2 m\, u_i v_i^{\mathsf{T}} }$ for each $m \in \n_0$.
	Since $(-1)^{m_1 + m_2} = (-1)^{N}$, expanding the product gives
	\[
		(-1)^{N} \operatorname{tr} M
		= \operatorname{tr}(X_1 X_2)
		- 2 m_1\, v_1^{\mathsf{T}} X_1 X_2 u_1
		- 2 m_2\, v_2^{\mathsf{T}} X_2 X_1 u_2
		+ 4 m_1 m_2 \parentheses[\big]{ v_2^{\mathsf{T}} X_2 u_1 } \parentheses[\big]{ v_1^{\mathsf{T}} X_1 u_2 } .
	\]
	A direct computation gives
	\[
		\operatorname{tr}(X_1 X_2) = 2 + \frac{4 d\, (d - \sigma_1 - \sigma_2)}{\sigma_1 \sigma_2}
	\]
	and, for $i \in \set{1, \, 2}$,
	\[
		v_i^{\mathsf{T}} X_i X_{3 - i} u_i = - \frac{2 (d - \sigma_i)(d - \sigma_1 - \sigma_2)}{\sigma_1 \sigma_2},
		\qquad
		v_i^{\mathsf{T}} X_i u_{3 - i} = \frac{d - \sigma_1 - \sigma_2}{\sigma_{3 - i}} .
	\]
	Hence
	\begin{equation}\label{eq:lem:isospectral:trace}
		(-1)^{N} \operatorname{tr} M
		= 2 + \frac{4\, (d - \sigma_1 - \sigma_2)}{\sigma_1 \sigma_2}
		\parentheses[\big]{ d + m_1 (d - \sigma_1) + m_2 (d - \sigma_2) + m_1 m_2 (d - \sigma_1 - \sigma_2) } .
	\end{equation}

	We claim that $d - \sigma_1 - \sigma_2 > 0$.
	Let $O_1$ and $O_2$ be the centers of the left and right arcs, so that $O_2 - O_1 = (a, \, 0)$.
	Since the arcs have unit radius, we have $O_1 = x_{p_1} + N_{p_1}$ and $O_2 = x_{p_1 + 1} + N_{p_1 + 1}$; as $x_{p_1 + 1} - x_{p_1} = d\, \nu_{p_1}$, it follows that
	\[
		O_2 - O_1 = d\, \nu_{p_1} + N_{p_1 + 1} - N_{p_1} .
	\]
	Taking the inner product with $\nu_{p_1}$ and using~\eqref{eq:lem:isospectral:reflection} at $x_{p_1}$ and at $x_{p_1 + 1}$, we obtain
	\begin{equation}\label{eq:lem:isospectral:defocusing}
		d - \sigma_1 - \sigma_2 = \big\langle O_2 - O_1, \, \nu_{p_1} \big\rangle ,
	\end{equation}
	which is $a$ times the first coordinate of $\nu_{p_1}$.
	The first coordinate of $x_{p_1}$ is at most $-a/2$ and that of $x_{p_1 + 1}$ is at least $a/2$, so the first coordinate of $d\, \nu_{p_1} = x_{p_1 + 1} - x_{p_1}$ is at least $a$, and
	\[
		d - \sigma_1 - \sigma_2 \geqslant \frac{a^{2}}{d} > 0 .
	\]
	This proves the claim, and it follows that $d - \sigma_i = \sigma_{3 - i} + (d - \sigma_1 - \sigma_2) > 0$ for $i \in \set{1, \, 2}$.
	Every term inside the parentheses in~\eqref{eq:lem:isospectral:trace} is therefore non-negative, the first being positive.
	Dropping the three terms that carry a factor $m_1$ or $m_2$ and using $\sigma_1 \sigma_2 \leqslant 1$, we obtain
	\[
		(-1)^{N} \operatorname{tr} M
		\geqslant 2 + \frac{4 d\, (d - \sigma_1 - \sigma_2)}{\sigma_1 \sigma_2}
		\geqslant 2 + 4 a^{2}
		> 2 .
	\]
	Hence $\abs{\operatorname{tr} M} > 2$, and the lemma follows.
\end{proof}

\begin{remark}\label{rem:period two orbit}
	The case $p_1 = p_2 = 1$ is included, and is used below.
	There $N = 2$, both collision angles equal $\frac{\pi}{2}$, and the two crossing chords are the same segment of the symmetry axis traversed in opposite directions, so that $\Gamma^{1, 1}$ is the maximal period-two orbit, of length $2 (a + 2)$.
	With $\sigma_1 = \sigma_2 = 1$, $m_1 = m_2 = 0$ and $d = a + 2$, the trace formula~\eqref{eq:lem:isospectral:trace} reads $(-1)^{N} \operatorname{tr} M = 2 + 4 a (a + 2)$.
	This orbit cannot be dispensed with: among the circular orbits $\Gamma^{q, q}$ it is the only one whose linear functional does not annihilate the Fourier mode $\mathbf{e}_{i, 1}$.
\end{remark}

Dynamical isospectrality enters the argument only through the following lemma, which holds for every $a > 0$, independently of the condition $a \geqslant 12$ in Subsection~\ref{sub:existence for long flat edges}.

\begin{lemma}	\label{lem:isospectral orbit functionals}
	Let $\set{ \Omega_{\tau} }_{\abs{\tau} \leqslant 1}$ be a dynamically isospectral $C^{1}$ family in $\mathcal{M}^{r}$ with $\Omega_{0} = \Omega_{*}$ and deformation function $\mathbf{n}$.
	Then $\ell^{p_1, p_2}(\mathbf{n}) = 0$ for each pair $\juxtapose{p_1}{p_2} \in \n$ such that the hybrid periodic orbit $\Gamma^{p_1, p_2}$ exists and is regular.
\end{lemma}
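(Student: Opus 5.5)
The argument runs in three steps: continue the orbit $\Gamma^{p_1, p_2}$ along the family, show that its length is constant in $\tau$ because it takes values in a null set, and then differentiate the length at $\tau = 0$ to obtain $\ell^{p_1,p_2}(\mathbf{n})$ up to a non-zero factor.

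\emph{Continuation.} For $\abs{\tau}$ small, define the deformed length function
\[
	L_\tau(s) \define \sum_{k=1}^{N} \abs[\big]{ \gamma_{i(k+1),\tau}(s_{k+1}) - \gamma_{i(k),\tau}(s_k) }, \qquad s \in U .
\]
Since $\tau \mapsto \gamma_{i,\tau}$ is $C^1$ with values in $C^r$, $r = 4$, the gradient $\nabla_s L_\tau(s)$ is jointly $C^1$ in $(\tau, s)$ near $(0, s^*)$. By Lemma~\ref{lem:hybrid orbit nondegenerate} the Hessian $\nabla^2 L_0(s^*)$ is non-singular. The implicit function theorem then gives a $C^1$ curve $\tau \mapsto s(\tau)$ with $s(0) = s^*$ and $\nabla_s L_\tau(s(\tau)) = 0$.

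The critical polygon is a genuine billiard trajectory in $\Omega_\tau$ for small $\tau$, for three reasons. First, vanishing of the partial derivatives, as in~\eqref{eq:lem:isospectral:first derivative}, is the reflection law at each vertex. Second, the vertices stay in the open arcs because $\Gamma^{p_1,p_2}$ is regular. Third, the segments stay inside $\Omega_\tau$ and avoid the flat edges by a small perturbation of the transversality at $\tau = 0$: each $\sin\varphi_k > 0$, and no segment touches the fixed flat edges, where the closed condition of touching persists only in the limit. Hence $\mathcal{L}_\tau \define L_\tau(s(\tau)) \in \mathcal{L}(\Omega_\tau) = \mathcal{L}(\Omega_*)$.

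\emph{Constancy.} The function $\tau \mapsto \mathcal{L}_\tau$ is continuous and takes values in a set of zero Lebesgue measure by Lemma~\ref{lem:length spectrum null}. A continuous function on an interval with values in a null set must be constant, since otherwise its image would contain a nondegenerate interval. Therefore $\frac{d}{d\tau}\mathcal{L}_\tau\big|_{\tau=0} = 0$.

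\emph{Computing the derivative.} Because $\nabla_s L_0(s^*) = 0$, the chain rule gives
\[
	\frac{d}{d\tau}\mathcal{L}_\tau\Big|_{\tau=0} = \partial_\tau L_\tau(s^*)\big|_{\tau=0} = \sum_k \big\langle \partial_\tau \gamma_{i(k),0}(s_k^*), \, \nu_{k-1} - \nu_k \big\rangle .
\]
By~\eqref{eq:lem:isospectral:reflection}, $\nu_{k-1} - \nu_k = -2\sin\varphi_k N_k$. The sum therefore equals $-2\sum_k \mathbf{n}(s_k^*)\sin\varphi_k = -2\,\ell^{p_1,p_2}(\mathbf{n})$. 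The parametrization conventions do not matter here: only the normal component enters, and reversing the direction of $s_2$ leaves $\ell_2^{p_1, p_2}$ unchanged, as noted above. We conclude $\ell^{p_1,p_2}(\mathbf{n}) = 0$.

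The main obstacle is the regularity bookkeeping in the continuation step. One must check that the implicit function theorem applies with only $C^1$ dependence on $\tau$, which requires $\partial_\tau \nabla_s L$ to be continuous; this holds because $\tau \mapsto \partial_s \gamma_{i,\tau}$ is $C^1$ into $C^{r-1}$. One must also verify that the continued polygon is still a legitimate closed trajectory of $\Omega_\tau$, i.e.\ that its length lies in $\mathcal{L}(\Omega_\tau)$, which is where regularity and the transversal crossing chords are used.
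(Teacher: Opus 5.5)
Your proposal is correct and follows the paper's route. Both arguments continue the non-degenerate critical point $s^{*}$ by the implicit function theorem, deduce that the continued length is constant because its image is an interval in the null set $\mathcal{L}(\Omega_{*})$, and compute $\Lambda'(0) = -2\,\ell^{p_1, p_2}(\mathbf{n})$ by the chain rule and \eqref{eq:lem:isospectral:reflection}.

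The paper is more explicit about why the continued polygon is a genuine billiard trajectory; your items (a) and (c) leave this somewhat loose, since a vanishing gradient alone only makes $\nu_{k-1} - \nu_k$ normal. The paper keeps $\nu_{k-1}(\tau, s(\tau)) \neq \nu_k(\tau, s(\tau))$ by continuity, which gives the reflection law. It then uses convexity of $\Omega_\tau$: a side whose interior met $\partial\Omega_\tau$ would lie in $\partial\Omega_\tau$ and force $\nu_{k-1} = \nu_k$. This is the precise form of your transversality-perturbation remark.
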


\begin{proof}
	Let $\juxtapose{p_1}{p_2}$ be as in the statement.
	We use the notation introduced before Lemma~\ref{lem:hybrid orbit nondegenerate}.
	For $\abs{\tau} \leqslant 1$ and $s \in U$, let
	\[
		L_{\tau}(s) \define \sum_{k = 1}^{N} \abs[\big]{ \gamma_{i(k + 1), \tau}(s_{k + 1}) - \gamma_{i(k), \tau}(s_{k}) }
	\]
	be the length of the closed polygon with vertices $\gamma_{i(k), \tau}(s_k)$, consecutive vertices being distinct for the reason given for $\tau = 0$, and let $\nu_k(\tau, s)$ be the unit vector from the $k$-th to the $(k + 1)$-st vertex of this polygon.
	Then $L_0 = L$ and $\nu_k(0, s) = \nu_k(s)$, and as in~\eqref{eq:lem:isospectral:first derivative},
	\[
		\frac{\partial L_{\tau}}{\partial s_k}(s)
		= \big\langle \partial_{s} \gamma_{i(k), \tau}(s_k), \, \nu_{k - 1}(\tau, s) - \nu_k(\tau, s) \big\rangle .
	\]
	We claim that if $s \in U$ is a critical point of $L_{\tau}$ with $\nu_{k - 1}(\tau, s) \neq \nu_k(\tau, s)$ for each $k$, then the polygon determined by $s$ is a closed billiard trajectory in $\Omega_{\tau}$.
	To see this, note first that at each vertex the difference $\nu_{k - 1}(\tau, s) - \nu_k(\tau, s)$ is normal to $\partial\Omega_{\tau}$, by the formula for $\partial L_{\tau} / \partial s_k$, and non-zero, by assumption; as $\nu_{k - 1}(\tau, s)$ and $\nu_k(\tau, s)$ are unit vectors, this means that $\nu_k(\tau, s)$ is the mirror image of $\nu_{k - 1}(\tau, s)$ in the tangent line, which is the reflection law.
	Moreover, the sides of the polygon lie in the closure of $\Omega_{\tau}$, as $\Omega_{\tau}$ is convex.
	If the interior of the $k$-th side met $\partial\Omega_{\tau}$, then convexity would place the whole side in $\partial\Omega_{\tau}$, so that $\nu_k(\tau, s)$ would be tangent to $\partial\Omega_{\tau}$ at the $k$-th vertex; as $\nu_{k - 1}(\tau, s) - \nu_k(\tau, s)$ is normal there and both vectors have unit length, this would force $\nu_{k - 1}(\tau, s) = \nu_k(\tau, s)$.
	Hence the sides meet $\partial\Omega_{\tau}$ only at the vertices, and the claim follows.

	By Lemma~\ref{lem:hybrid orbit nondegenerate}, the point $s^{*}$ is a non-degenerate critical point of $L_0$.
	Since $\tau \mapsto \gamma_{i, \tau}$ is $C^{1}$ with values in $C^{r}(S_i, \real^{2})$ and $r \geqslant 2$, the map $(\tau, s) \mapsto \nabla_{s} L_{\tau}(s)$ is $C^{1}$.
	The implicit function theorem therefore gives a number $\tau_0 \in (0, 1]$, depending on $\juxtapose{p_1}{p_2}$, and a $C^{1}$ map $\tau \mapsto s(\tau)$ from $(-\tau_0, \tau_0)$ into $U$ with $s(0) = s^{*}$ and $\nabla_{s} L_{\tau}(s(\tau)) = 0$.
	We have $\nu_{k - 1} \neq \nu_k$ for each $k$ by~\eqref{eq:lem:isospectral:reflection}, and $\nu_k(\tau, s)$ depends continuously on $(\tau, s)$.
	Decreasing $\tau_0$ if necessary, we may therefore assume that $\nu_{k - 1}(\tau, s(\tau)) \neq \nu_k(\tau, s(\tau))$ for each $k$ and each $\abs{\tau} < \tau_0$.
	By the claim, the polygon determined by $s(\tau)$ is a closed billiard trajectory in $\Omega_{\tau}$, and so its length $\Lambda(\tau) \define L_{\tau}(s(\tau))$ belongs to $\mathcal{L}(\Omega_{\tau})$.

	The function $\Lambda$ is $C^{1}$ on $(-\tau_0, \tau_0)$, and $\mathcal{L}(\Omega_{\tau}) = \mathcal{L}(\Omega_{*})$ by dynamical isospectrality.
	Thus the image of $\Lambda$ is an interval contained in $\mathcal{L}(\Omega_{*})$, a set of measure zero by Lemma~\ref{lem:length spectrum null}; hence $\Lambda$ is constant, and $\Lambda'(0) = 0$.
	On the other hand, since $\nabla_{s} L_{0}(s^{*}) = 0$, the chain rule gives $\Lambda'(0) = \partial_{\tau} L_{\tau}(s^{*}) \big|_{\tau = 0}$.
	Differentiating the definition of $L_{\tau}$ in $\tau$ and grouping the terms by vertices, we obtain
	\[
		\Lambda'(0)
		= \sum_{k = 1}^{N} \big\langle \partial_{\tau} \gamma_{i(k), \tau}(s_k^{*}) \big|_{\tau = 0}, \, \nu_{k - 1} - \nu_k \big\rangle
		= -2 \sum_{k = 1}^{N} \mathbf{n}(s_k^{*}) \sin\varphi_k
		= -2\, \ell^{p_1, p_2}(\mathbf{n}) ,
	\]
	where the second equality follows from~\eqref{eq:lem:isospectral:reflection} and the definition~\eqref{eq:def:deformation function} of the deformation function.
	Hence $\ell^{p_1, p_2}(\mathbf{n}) = 0$.
\end{proof}

Regularity is needed because $\partial\Omega_{*}$ is only $C^{1}$ at the junction points; at a configuration with a collision there, the function $L$ is not twice differentiable, and the Hessian~\eqref{eq:lem:isospectral:hessian} is unavailable.

Every hybrid periodic orbit used below is regular: the circular orbits $\Gamma^{q, q}$ and the orbits $\Gamma^{q, mq}$ with $q$ large by Lemma~\ref{lem:Taylor expansion of collision angle}, and, once $a \geqslant 12$, the orbits $\Gamma^{1, m}$ and $\Gamma^{q, mq}$ with $q \geqslant 2$ by Lemmas~\ref{lem:short orbit control} and~\ref{lem:tail orbit asymptotics}.

For $\mathbf{n} = \mathbf{n}_1 \cup \mathbf{n}_2$, we write the Fourier expansion $\mathbf{n}_i = \sum_{j = 0}^{+\infty} \widehat{n}_{i, j}\, \mathbf{e}_{i, j}$ for each $i \in \set{1, \, 2}$, with $\mathbf{e}_{i, j}$ the Fourier basis defined in~\eqref{eq:def of Fourier basis e_ij}.
Then
\begin{equation}    \label{eq:isospectral condition}
	\sum_{j = 0}^{+\infty} \widehat{n}_{1, j} \ell_1^{p_1, p_2}(\mathbf{e}_{1, j}) + \sum_{j = 0}^{+\infty} \widehat{n}_{2, j} \ell_2^{p_1, p_2}(\mathbf{e}_{2, j}) = 0.
\end{equation}
It follows from \eqref{eq: Gamma qi s varphi}, \eqref{eq: Gamma qi s varphi-}, and \eqref{eq:linear functionals associated with hybrid periodic orbits:each side} that, for each $i \in \set{1, \, 2}$ and each $j \in \n_0$,
\begin{equation} \label{eq:expression of linear functional}
	\begin{split}
		\ell_i^{p_1, p_2}(\mathbf{e}_{i, j})
		&= \sin\varphi_i^{p_1, p_2} \sum_{k = 1}^{p_i} \cos{\parentheses[\big]{ 2 j s_i^{p_1, p_2}(k) } } \\
		&= \sin\varphi_i^{p_1, p_2} \sum_{k = 1}^{p_i} \cos{\parentheses[\big]{ ( 2k-p_i-1)  2j \varphi_i^{p_1, p_2} } } \\
		&= \sin \varphi_i^{p_1, p_2}  \frac{\sin{\parentheses[\big]{ p_i \parentheses[\big]{ 2j \varphi_i^{p_1, p_2} } } }}{\sin{\parentheses[\big]{ 2 j \varphi_i^{p_1, p_2} } }}.
	\end{split}
\end{equation}
Recall from Lemma~\ref{lem:Taylor expansion of collision angle} that $\varphi_i^{p_1, p_2} = \frac{\pi}{2p_i} + \varepsilon_i$, where $\varepsilon_i = \mathcal{O}(p_i^{-3})$ for the proportional families $\Gamma^{q, mq}$ ($m$ fixed) considered in this article.
We are thus led to consider the function
\begin{equation}    \label{eq:def:Phi}
	\begin{split}
		\Phi^{j, p}(\varepsilon) &\define \sin \parentheses[\Big]{ \frac{\pi}{2p} +\varepsilon } \frac{\sin \parentheses[\big]{ 2pj \parentheses[\big]{ \frac{\pi}{2p} +\varepsilon } } }{\sin \parentheses[\big]{ 2j \parentheses[\big]{ \frac{\pi}{2p} +\varepsilon } } } \\
		&= \sin \parentheses[\Big]{ \frac{\pi}{2p} +\varepsilon } \frac{\sin \parentheses[\big]{ p \parentheses[\big]{ \frac{j\pi}{p} + 2j \varepsilon } } }{\sin \parentheses[\big]{ \frac{j\pi}{p} + 2j \varepsilon } }.
	\end{split}
\end{equation}

\begin{lemma} \label{lem:Taylor expansion of auxiliary variable}
	If $\varepsilon = \mathcal{O}(p^{-3})$ as $p \to +\infty$, then the Taylor expansion of $\Phi^{j, p}(\varepsilon)$ at $\varepsilon = 0$ is
	\begin{equation}    \label{eq:lem:Taylor expansion of auxiliary variable}
		\Phi^{j, p}(\varepsilon) = \sum_{k = 0}^{1} D^{j, p}_k\, \varepsilon^k + \mathcal{O}(j^2\, p^{-4}) \quad \text{as } p \to +\infty, \text{ uniformly in } j \in \n,
	\end{equation}
	where $D_k^{j, p}$ is given by~\eqref{eq:lem:Taylor expansion of auxiliary variable:coefficients:p mid j} and~\eqref{eq:lem:Taylor expansion of auxiliary variable:coefficients:p not mid j}.
	In particular, we have $\abs[\big]{ D_k^{j, p} } = \mathcal{O}(j^k\, p^k)$ as $p \to +\infty$, uniformly in $j \in \n$, for each $k \in \set{0, \, 1}$.
\end{lemma}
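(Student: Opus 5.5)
The plan is to treat $\Phi^{j, p}$ as a composition and control its second $\varepsilon$-derivative uniformly in $j$, using Taylor's formula with Lagrange remainder. Write $\Phi^{j, p}(\varepsilon) = \sin\parentheses[\big]{\frac{\pi}{2p} + \varepsilon}\, K_p\parentheses[\big]{\frac{j\pi}{p} + 2j\varepsilon}$, where
\[
	K_p(x) \define \frac{\sin(px)}{\sin x} = \sum_{k = 1}^{p} \cos\parentheses[\big]{(2k - p - 1)x}
\]
is the Dirichlet-type kernel already implicit in~\eqref{eq:expression of linear functional}. The trigonometric-sum representation is the key point. It shows that $K_p$ is an entire trigonometric polynomial, so the apparent singularities at $\sin x = 0$ (which occur exactly when $p \mid j$ and $\varepsilon = 0$) are removable. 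It also gives the uniform bound
\[
	\abs[\big]{K_p^{(n)}(x)} \leqslant \sum_{k = 1}^{p} \abs{2k - p - 1}^{n} \leqslant p^{n + 1}
	\qquad \text{for all } x \in \real \text{ and } n \in \n_0 .
\]

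Next, I define the coefficients explicitly by $D_0^{j, p} \define \Phi^{j, p}(0)$ and $D_1^{j, p} \define \partial_\varepsilon \Phi^{j, p}(0)$, splitting into the two cases. If $p \nmid j$, then $\sin(j\pi/p) \neq 0$ and $\sin(j\pi) = 0$, so $K_p(j\pi/p) = 0$ and $K_p'(j\pi/p) = (-1)^{j} p / \sin(j\pi/p)$. This gives $D_0^{j, p} = 0$ and $D_1^{j, p} = 2j\,(-1)^{j} p \sin\frac{\pi}{2p} / \sin\frac{j\pi}{p}$; this is~\eqref{eq:lem:Taylor expansion of auxiliary variable:coefficients:p not mid j}. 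If $j = lp$, then the cosine sum gives $K_p(l\pi) = p(-1)^{l(p - 1)}$, while $K_p'(l\pi) = 0$ because the sum of sines $\sum_k (2k - p - 1)\sin\parentheses[\big]{(2k - p - 1)l\pi}$ vanishes. This gives $D_0^{j, p} = p(-1)^{l(p - 1)} \sin\frac{\pi}{2p}$ and $D_1^{j, p} = p(-1)^{l(p - 1)}\cos\frac{\pi}{2p}$; this is~\eqref{eq:lem:Taylor expansion of auxiliary variable:coefficients:p mid j}. The claimed bounds then follow from the general formulas $D_0 = \sin(\frac{\pi}{2p}) K_p$ and $D_1 = \cos(\frac{\pi}{2p}) K_p + 2j \sin(\frac{\pi}{2p}) K_p'$, together with $\sin\frac{\pi}{2p} \leqslant \frac{\pi}{2p}$ and the derivative bound. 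Indeed, $\abs{D_0^{j, p}} \leqslant \frac{\pi}{2p} \cdot p = \mathcal{O}(1)$, and $\abs{D_1^{j, p}} \leqslant p + 2j \cdot \frac{\pi}{2p} \cdot p^{2} = \mathcal{O}(jp)$, uniformly in $j$.

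For the remainder, Taylor's theorem gives $\Phi^{j, p}(\varepsilon) - D_0^{j, p} - D_1^{j, p}\varepsilon = \frac{1}{2}\varepsilon^{2}\,\partial_\varepsilon^{2}\Phi^{j, p}(\varepsilon^{*})$ for some $\varepsilon^{*}$ between $0$ and $\varepsilon$. The product rule expresses $\partial_\varepsilon^{2}\Phi^{j, p}$ as a sum of three terms:
\[
	-\sin(\cdot)\, K_p, \qquad 2\cos(\cdot)\,(2j) K_p', \qquad \sin(\cdot)\,(2j)^{2} K_p'' .
\]
Their sizes are at most $p$, $4jp^{2}$, and $4j^{2}p^{3}\abs{\sin(\frac{\pi}{2p} + \varepsilon^{*})}$ respectively. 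Since $\abs{\varepsilon^{*}} \leqslant \abs{\varepsilon} = \mathcal{O}(p^{-3})$, we have $\abs{\sin(\frac{\pi}{2p} + \varepsilon^{*})} = \mathcal{O}(p^{-1})$, so all three terms are $\mathcal{O}(j^{2}p^{2})$. Multiplying by $\varepsilon^{2} = \mathcal{O}(p^{-6})$ gives the remainder $\mathcal{O}(j^{2}p^{-4})$, uniformly in $j \in \n$.

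The main point requiring care, rather than a real obstacle, is uniformity in $j$. In particular, one must avoid the quotient form of $\Phi^{j, p}$ near $p \mid j$, and one must keep $\sin(\frac{\pi}{2p} + \varepsilon^{*})$ as the factor that absorbs one power of $p$ in the $K_p''$ term. The cosine-sum representation of $K_p$ handles both issues at once.
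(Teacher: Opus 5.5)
Your proof is correct and follows essentially the same route as the paper. Both use the factorization $\Phi^{j,p}(\varepsilon)=\sin(\frac{\pi}{2p}+\varepsilon)\,f_p(\frac{j\pi}{p}+2j\varepsilon)$, the cosine-sum representation giving $|f_p^{(k)}|=\mathcal{O}(p^{k+1})$, and the case split $p\mid j$ versus $p\nmid j$ for $D_0^{j,p}$ and $D_1^{j,p}$. The one difference is bookkeeping: you apply a single Lagrange remainder to the product through the explicit formula for $(\Phi^{j,p})''$, which the paper itself uses later in the kernel analysis. The paper instead multiplies separate expansions of the two factors and absorbs the cross terms, so your handling of the remainder is slightly more transparent.
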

\begin{proof}
	By Taylor expansion,
	\begin{equation}    \label{eq:temp:lem:Taylor expansion of auxiliary variable:taylor expansion of sin}
		\sin\parentheses[\Big]{\frac{\pi}{2p} + \varepsilon}
		= \sin\frac{\pi}{2p} + \cos\parentheses[\Big]{\frac{\pi}{2p}} \cdot \varepsilon + \mathcal{O}(\varepsilon^2)
	\end{equation}
	as $\varepsilon \to 0$.
	Define $f(x) \define f_p(x) \define \frac{\sin(px)}{\sin x}$, extended by removable continuation at the zeros of the denominator, so that
	\[
		f(x) = \sum_{k = 1}^{p} \cos\bigl((2k - p - 1)\, x\bigr)
	\]
	for all $x \in \real$.
	Taylor expansion of $f$ near $x = j\pi/p$ gives
	\begin{equation*}
		\frac{\sin\bigl(p(\frac{j\pi}{p} + 2j\varepsilon)\bigr)}{\sin\bigl(\frac{j\pi}{p} + 2j\varepsilon\bigr)}
		= f\parentheses[\Big]{\frac{j\pi}{p} + 2j\varepsilon}
		= f\parentheses[\Big]{\frac{j\pi}{p}} + f'\parentheses[\Big]{\frac{j\pi}{p}} \cdot 2j\, \varepsilon + \frac{1}{2}\, f''\parentheses[\Big]{\frac{j\pi}{p} + \vartheta \cdot 2j\varepsilon} \cdot 4 j^2\, \varepsilon^2,
	\end{equation*}
	for some $\vartheta \in [0, 1]$, where $f'(x) = \frac{p \cos(px)}{\sin x} - \frac{\sin(px)\, \cos x}{\sin^2 x}$.

	We distinguish the cases $p \mid j$ and $p \nmid j$.

	\smallskip

	\emph{Case~1.} $p \mid j$.

	\smallskip

	In this case, we have\[
		f \parentheses[\Big]{ \frac{j \pi}{p} } = (-1)^{j + \frac{j}{p}} p, \quad 
		f' \parentheses[\Big]{ \frac{j \pi}{p} } = 0.
	\]
	Thus, we obtain
	\begin{equation}     \label{eq:lem:Taylor expansion of auxiliary variable:coefficients:p mid j}
		\begin{split}
			D_0^{j,p} &= (-1)^{j + \frac{j}{p} } \, p \sin{\frac{\pi}{2p}},\\
			D_1^{j,p} &= (-1)^{j + \frac{j}{p} } \, p \cos{\frac{\pi}{2p}}.
		\end{split}
	\end{equation}

	\smallskip

	\emph{Case~2.}  $p \nmid j$.

	\smallskip

	In this case, we have\[
		\begin{split}
			f \parentheses[\Big]{ \frac{j \pi}{p} } &= 0, \quad 
			f' \parentheses[\Big]{ \frac{j \pi}{p} } = \frac{(-1)^{j} p}{\sin \frac{j \pi}{p} }.
		\end{split}
	\]
	Thus, we obtain
	\begin{equation}     \label{eq:lem:Taylor expansion of auxiliary variable:coefficients:p not mid j}
		\begin{split}
			D_0^{j,p} &=  0,\\
			D_1^{j,p} &= (-1)^j p j \frac{2 \sin\frac{\pi}{2p} }{\sin{\frac{j\pi}{p}}}.
		\end{split}
	\end{equation}

	From the definition of $f$,
	\[
		f^{(k)}(x) =
		\begin{cases}
			\sum_{n = 1}^{p} (-1)^{k/2} (2n - p - 1)^k \cos\parentheses[\big]{ (2n - p - 1)\, x }, & 2 \mid k, \\
			\sum_{n = 1}^{p} (-1)^{(k+1)/2} (2n - p - 1)^k \sin\parentheses[\big]{ (2n - p - 1)\, x }, & 2 \nmid k,
		\end{cases}
	\]
	where $f^{(k)}(x)$ denotes the $k$-th derivative of $f$ at $x$.
	Hence
	\[
		\abs[\big]{ f^{(k)}(x) } \leqslant \sum_{n = 1}^{p} \abs{2n - p - 1}^k = \mathcal{O}\bigl(p^{k + 1}\bigr) \quad \text{as } p \to +\infty.
	\]
	By~\eqref{eq:lem:Taylor expansion of auxiliary variable:coefficients:p mid j} and~\eqref{eq:lem:Taylor expansion of auxiliary variable:coefficients:p not mid j},
	\[
		\abs[\big]{ D_k^{j,p} } = \mathcal{O}\bigl(j^k p^k\bigr) \quad \text{as } p \to +\infty, \text{ uniformly in } j \in \n,
	\]
	for each $k \in \set{0, \, 1}$.
	Combining these bounds with $\varepsilon = \mathcal{O}\bigl(p^{-3}\bigr)$ as $p \to +\infty$, and noting that the $j$-independent contribution $\mathcal{O}(\varepsilon^{2}) \cdot \abs[\big]{ f\parentheses[\big]{ \frac{j\pi}{p} + 2j\varepsilon } } = \mathcal{O}(p^{-5})$ is absorbed in $\mathcal{O}(j^{2}\, p^{-4})$ for $j \geqslant 1$, we obtain~\eqref{eq:lem:Taylor expansion of auxiliary variable}.
\end{proof}

\subsection{Linear functionals for the particular hybrid orbits}%
\label{sub:Linear functionals for the particular hybrid orbits}

We now specialize to the hybrid orbits $\Gamma^{q, mq}$, with $m \in \n$ fixed and $q \in \n$ sufficiently large.
Applying Lemma~\ref{lem:Taylor expansion of auxiliary variable}, we obtain the following expansions of the linear functionals on the Fourier basis.

\begin{lemma} \label{lem:estimate of linear functions}
	For each $m \in \n$, as $q \to +\infty$, uniformly in $j \in \n$,
	\[
		\ell_1^{q, mq}(\mathbf{e}_{1, j}) = \sum_{k = 0}^{2} \frac{a^{m, j, q}_k}{q^k} + \mathcal{O}(j^2\, q^{-4})
		\quad \text{and} \quad
		\ell_2^{q, mq}(\mathbf{e}_{2, j}) = \sum_{k = 0}^{2} \frac{b^{m, j, q}_k}{q^k} + \mathcal{O}(j^2\, q^{-4}),
	\]
	where the $a^{m, j, q}_k$ are given by~\eqref{eq:lem:estimate of linear functions:coefficients left:q mid j} and~\eqref{eq:lem:estimate of linear functions:coefficients left:q not mid j}, and the $b^{m, j, q}_k$ by~\eqref{eq:lem:estimate of linear functions:coefficients right:mq mid j} and~\eqref{eq:lem:estimate of linear functions:coefficients right:mq not mid j}.
\end{lemma}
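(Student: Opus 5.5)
We will obtain both expansions by substituting the angle asymptotics of Corollary~\ref{cor:Taylor expansion of collision angle} into the closed form~\eqref{eq:expression of linear functional} and then applying Lemma~\ref{lem:Taylor expansion of auxiliary variable}.

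For the left arc, \eqref{eq:expression of linear functional} gives $\ell_1^{q, mq}(\mathbf{e}_{1, j}) = \Phi^{j, q}(\varepsilon_1)$ with $\varepsilon_1 = \varphi_1^{q, mq} - \frac{\pi}{2q} = -c_3(1 - m^{-2})q^{-3} + \mathcal{O}(q^{-5})$. Since $\varepsilon_1 = \mathcal{O}(q^{-3})$, Lemma~\ref{lem:Taylor expansion of auxiliary variable} applies with $p = q$ and yields $\Phi^{j, q}(\varepsilon_1) = D_0^{j, q} + D_1^{j, q}\varepsilon_1 + \mathcal{O}(j^2 q^{-4})$. The remainder $\mathcal{O}(q^{-5})$ in $\varepsilon_1$ multiplies $D_1^{j, q} = \mathcal{O}(jq)$, so it contributes $\mathcal{O}(jq^{-4}) \subseteq \mathcal{O}(j^2 q^{-4})$. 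We may therefore replace $\varepsilon_1$ by its leading term. The right arc is handled in the same way: $\ell_2^{q, mq}(\mathbf{e}_{2, j}) = \Phi^{j, mq}(\varepsilon_2)$ with $p = mq$ and $\varepsilon_2 = c_3 m^{-1}(1 - m^{-2})q^{-3} + \mathcal{O}(q^{-5})$. Here $m$ is fixed, so $\mathcal{O}((mq)^{-4}) = \mathcal{O}(q^{-4})$, with constants depending on $m$ only.

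Next we expand the coefficients $D_k$ in powers of $1/q$, splitting into the two cases of Lemma~\ref{lem:Taylor expansion of auxiliary variable}.

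\emph{Case $q \mid j$ (left arc).} We have $D_0^{j, q} = (-1)^{j + j/q}\, q\sin\frac{\pi}{2q} = (-1)^{j + j/q}\bigl(\frac{\pi}{2} - \frac{\pi^3}{48 q^2} + \mathcal{O}(q^{-4})\bigr)$, and $D_1^{j, q}\varepsilon_1 = -(-1)^{j + j/q}\, c_3(1 - m^{-2})q^{-2} + \mathcal{O}(q^{-4})$. Collecting terms gives $a_0 = (-1)^{j + j/q}\frac{\pi}{2}$, $a_1 = 0$, and $a_2 = (-1)^{j + j/q}\bigl(-\frac{\pi^3}{48} - c_3(1 - m^{-2})\bigr)$.

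\emph{Case $q \nmid j$ (left arc).} We have $D_0 = 0$, and $D_1\varepsilon_1 = (-1)^j q j\, \frac{2\sin(\pi/(2q))}{\sin(j\pi/q)}\, \varepsilon_1$. Since $2q\sin\frac{\pi}{2q} = \pi + \mathcal{O}(q^{-2})$, this equals $-(-1)^j\, \frac{\pi c_3(1 - m^{-2})\, j}{q^3 \sin(j\pi/q)} + \mathcal{O}(j q^{-5}/\abs{\sin(j\pi/q)})$. We set $a_0 = a_1 = 0$ and define $a_2$ as $q^2$ times the main term, a quantity depending on $j$ and $q$ only through $j$ and $\sin(j\pi/q)$. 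The right-arc coefficients $b_k^{m, j, q}$ are defined the same way, with $q$ replaced by $mq$ in the divisibility test and in the sine, and with $\varepsilon_2$ in place of $\varepsilon_1$.

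The main obstacle is \emph{uniformity in $j$} in the case $q \nmid j$, because $1/\abs{\sin(j\pi/q)}$ can be as large as $\sim q$. Two points need checking.
\begin{enumerate}
\item Lemma~\ref{lem:Taylor expansion of auxiliary variable} already gives the Taylor remainder uniformly through the bound $\abs{f''} = \mathcal{O}(p^3)$, which yields $\mathcal{O}(j^2 p^{-4})$.
\item The explicit main term is itself controlled directly by the uniform bound $\abs{D_1^{j, p}} = \mathcal{O}(jp)$. Hence the discarded pieces, namely the correction $\mathcal{O}(q^{-2})$ from $2q\sin\frac{\pi}{2q}$ and the $\mathcal{O}(q^{-5})$ part of $\varepsilon$, contribute at most $\mathcal{O}(jq) \cdot \mathcal{O}(q^{-5}) = \mathcal{O}(j q^{-4})$.
\end{enumerate}
It follows that $a_2/q^2$ is itself $\mathcal{O}(jq^{-2})$ uniformly, which is the form needed later. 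All error terms are therefore $\mathcal{O}(j^2 q^{-4})$ uniformly in $j \geqslant 1$, and this proves the lemma.
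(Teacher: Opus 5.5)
Your proposal is correct and follows the paper's proof essentially step by step. Both write each functional as $\Phi^{j,p_i}(\varepsilon_i)$, apply Lemma~\ref{lem:Taylor expansion of auxiliary variable}, substitute the angle asymptotics of Corollary~\ref{cor:Taylor expansion of collision angle}, and expand $D_0^{j,p}$ and $D_1^{j,p}$ in the two divisibility cases; your coefficients coincide with the paper's when $p_i \mid j$.

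The only difference arises when $p_i \nmid j$. There the paper keeps the exact factor $2\sin\frac{\pi}{2q}$ (resp.\ $2\sin\frac{\pi}{2mq}$) in $a_2^{m,j,q}$ (resp.\ $b_2^{m,j,q}$), whereas you replace $2q\sin\frac{\pi}{2q}$ by $\pi$. Since $|a_2^{m,j,q}|/q^2 = \mathcal{O}(j q^{-2})$ uniformly in $j$, the two versions differ by only $\mathcal{O}(j q^{-4})$. Your argument therefore also yields the lemma with the paper's stated coefficients.
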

\begin{proof}
	Fix $m \in \n$, and set $c_3 \define \pi^2/(8a)$.

	\emph{Left functional $\ell_1^{q, mq}(\mathbf{e}_{1, j})$.}
	By Corollary~\ref{cor:Taylor expansion of collision angle} and Lemma~\ref{lem:Taylor expansion of auxiliary variable}, we have
	\begin{equation}    \label{eq:temp:lem:estimate of linear functions:functional left}
		\ell_1^{q, mq} (\mathbf{e}_{1, j}) = \Phi^{j, q}(\varepsilon_1)
		= \sum_{k = 0}^{1} D^{j, q}_k \varepsilon_1^k + \mathcal{O} \parentheses[\big]{ j^{2} q^{-4} } \quad \text{as } q \to +\infty, \text{ uniformly in } j,
	\end{equation}
	where $\varepsilon_1 \define \varphi_1^{q, mq} - \frac{\pi}{2q} = -c_3\, (1 - m^{-2})\, q^{-3} + \mathcal{O}(q^{-5})$ as $q \to +\infty$, and $\abs[\big]{ D_k^{j, q} } = \mathcal{O}(j^k\, q^k)$ as $q \to +\infty$, uniformly in $j \in \n$, for $k \in \set{0, \, 1}$.
	Substituting the expansion of $\varepsilon_1$ into~\eqref{eq:temp:lem:estimate of linear functions:functional left} and collecting the powers of $q$ gives the asserted expansion of $\ell_1^{q, mq}(\mathbf{e}_{1, j})$; it remains to identify the coefficients $a^{m, j, q}_{k}$, which we do in the two cases $q \mid j$ and $q \nmid j$.

	\smallskip

	\emph{Case~1.} $q \mid j$.

	\smallskip

	Taylor-expanding the formulas in~\eqref{eq:lem:Taylor expansion of auxiliary variable:coefficients:p mid j} of Lemma~\ref{lem:Taylor expansion of auxiliary variable} gives, as $q \to +\infty$,
	\[
		\begin{split}
			D_0^{j, q} &= (-1)^{j + \frac{j}{q}} \parentheses[\Big]{ \frac{\pi}{2} - \frac{\pi^3}{48} q^{-2} } + \mathcal{O} \parentheses[\big]{ q^{-4} },   \\
			D_1^{j, q} &= (-1)^{j + \frac{j}{q}} \parentheses[\Big]{ q - \frac{\pi^2}{8} q^{-1} } + \mathcal{O} \parentheses[\big]{ q^{-3} },
		\end{split}
	\]
	and hence
	\begin{equation}    \label{eq:lem:estimate of linear functions:coefficients left:q mid j}
		\begin{split}
			a_{0}^{m, j, q} &= (-1)^{j + \frac{j}{q}} \frac{\pi}{2},\\
			a_{1}^{m, j, q} &=0,  \\
			a_{2}^{m, j, q} &= (-1)^{j + \frac{j}{q} + 1} \parentheses[\Big]{ \frac{\pi^3}{48} + c_3 \parentheses[\big]{ 1 - m^{-2} }  }.
		\end{split}
	\end{equation}

	\emph{Case~2.} $q \nmid j$.

	\smallskip

	Here~\eqref{eq:lem:Taylor expansion of auxiliary variable:coefficients:p not mid j} of Lemma~\ref{lem:Taylor expansion of auxiliary variable} gives
	\begin{equation}    \label{eq:lem:estimate of linear functions:coefficients left:q not mid j}
		\begin{split}
			a_{0}^{m, j, q} &= 0,\\
			a_{1}^{m, j, q} &= 0,  \\
			a_{2}^{m, j, q} &= (-1)^{j + 1}  2 c_3 \parentheses[\big]{ 1 - m^{-2} } j \frac{\sin\frac{\pi}{2q}}{\sin \frac{j \pi}{q}}.
		\end{split}
	\end{equation}

	\emph{Right functional $\ell_2^{q, mq}(\mathbf{e}_{2, j})$.}
	By Corollary~\ref{cor:Taylor expansion of collision angle} and Lemma~\ref{lem:Taylor expansion of auxiliary variable}, we have
	\begin{equation}    \label{eq:temp:lem:estimate of linear functions:functional right}
		\ell_2^{q, mq} (\mathbf{e}_{2, j}) = \Phi^{j, mq}(\varepsilon_2)
		= \sum_{k = 0}^{1} D^{j, mq}_k \varepsilon_2^k + \mathcal{O} \parentheses[\big]{ j^{2} q^{-4} } \quad \text{as } q \to +\infty, \text{ uniformly in } j,
	\end{equation}
	where $\varepsilon_2 \define \varphi_2^{q, mq} - \frac{\pi}{2 mq} = c_{3} m^{-1} \parentheses[\big]{ 1 - m^{-2} } q^{-3} + \mathcal{O}\parentheses[\big]{ q^{-5} }$ as $q \to +\infty$.
	As before, substituting the expansion of $\varepsilon_2$ into~\eqref{eq:temp:lem:estimate of linear functions:functional right} and collecting the powers of $q$ gives the asserted expansion of $\ell_2^{q, mq}(\mathbf{e}_{2, j})$, with coefficients determined in the two cases $m q \mid j$ and $m q \nmid j$.

	\smallskip

	\emph{Case~1.} $m q \mid j$.

	\smallskip

	Taylor-expanding the formulas in~\eqref{eq:lem:Taylor expansion of auxiliary variable:coefficients:p mid j} of Lemma~\ref{lem:Taylor expansion of auxiliary variable} gives, as $q \to +\infty$,
	\[
		\begin{split}
			D_0^{j, m q} &= (-1)^{j + \frac{j}{mq}} \parentheses[\Big]{ \frac{\pi}{2} - \frac{\pi^3}{48} m^{-2} q^{-2} } + \mathcal{O} \parentheses[\big]{ q^{-4} },   \\
			D_1^{j, m q} &= (-1)^{j + \frac{j}{mq}} \parentheses[\Big]{ m q - \frac{\pi^2}{8} m^{-1} q^{-1} } + \mathcal{O} \parentheses[\big]{ q^{-3} },
		\end{split}
	\]
	and hence
	\begin{equation}    \label{eq:lem:estimate of linear functions:coefficients right:mq mid j}
		\begin{split}
			b_{0}^{m, j, q} &= (-1)^{j + \frac{j}{mq}} \frac{\pi}{2},\\
			b_{1}^{m, j, q} &=0,  \\
			b_{2}^{m, j, q} &= (-1)^{j + \frac{j}{mq} + 1} \parentheses[\Big]{ \frac{\pi^3}{48} m^{-2} - c_3 \parentheses[\big]{ 1 - m^{-2} }  }.
		\end{split}
	\end{equation}

	\emph{Case~2.} $mq \nmid j$.

	\smallskip

	Here~\eqref{eq:lem:Taylor expansion of auxiliary variable:coefficients:p not mid j} of Lemma~\ref{lem:Taylor expansion of auxiliary variable} gives
	\begin{equation}    \label{eq:lem:estimate of linear functions:coefficients right:mq not mid j}
		\begin{split}
			b_{0}^{m, j, q} &= 0,\\
			b_{1}^{m, j, q} &= 0,  \\
			b_{2}^{m, j, q} &= (-1)^{j}  2 c_3 \parentheses[\big]{ 1 - m^{-2} } j \frac{\sin\frac{\pi}{2mq}}{\sin \frac{j \pi}{mq}}.
		\end{split}
	\end{equation}
\end{proof}

\begin{remark}\label{rem:estimate of linear functions:special cases}
	Two consequences of Lemma~\ref{lem:estimate of linear functions} are recorded below.

	\smallskip

	\emph{(i)} Fix $m \in \set{2, \, 3, \, 4}$. For all sufficiently large $q$ and all integers $s \geqslant 2$,
	\[
		\ell_1^{q, mq}(\mathbf{e}_{1, sq})
		= (-1)^{sq + s}\, \frac{\pi}{2} + \mathcal{O}\bigl(s^2\, q^{-2}\bigr),
	\]
	and
	\[
		\ell_2^{q, mq}(\mathbf{e}_{2, sq})
		=
		\begin{cases}
			(-1)^{sq + s/m}\, \frac{\pi}{2} + \mathcal{O}\bigl(s^2\, q^{-2}\bigr), & m \mid s, \\
			\mathcal{O}\bigl(s^2\, q^{-2}\bigr), & m \nmid s.
		\end{cases}
	\]
	Indeed, the cases $q \mid sq$ and $mq \mid sq$ follow from the coefficient formulas~\eqref{eq:lem:estimate of linear functions:coefficients left:q mid j} and~\eqref{eq:lem:estimate of linear functions:coefficients right:mq mid j} together with the remainder $\mathcal{O}\parentheses[\big]{ j^{2} q^{-4} }$ evaluated at $j = sq$.
	If $m \nmid s$, then $mq \nmid sq$, and by~\eqref{eq:lem:estimate of linear functions:coefficients right:mq not mid j},
	\[
		\frac{b_2^{m, sq, q}}{q^2}
		= (-1)^{sq}\, 2\, c_3\, \bigl(1 - m^{-2}\bigr)\, \frac{s}{q} \cdot \frac{\sin\frac{\pi}{2mq}}{\sin\frac{s\pi}{m}}.
	\]
	Since $m \in \set{2, \, 3, \, 4}$, the quantity $\abs{\sin\frac{s\pi}{m}}$ is uniformly bounded below whenever non-zero, so $b_2^{m, sq, q}/q^2 = \mathcal{O}(s\, q^{-2})$, which is absorbed by $\mathcal{O}(s^2\, q^{-2})$ for $s \geqslant 2$.

	\smallskip

	\emph{(ii)} Fix $j \geqslant 1$. Suppose that, for each $q$, the coefficients $y_m^{(q)}$ ($m \in \set{2, \, 3, \, 4}$) are bounded and satisfy
	\[
		\sum_{m \in \set{2, \, 3, \, 4}} y_m^{(q)}\, (1 - m^{-2}) = 0.
	\]
	Then
	\[
		\sum_{m \in \set{2, \, 3, \, 4}} y_m^{(q)}
		\bigl( \ell_1^{q, mq}(\mathbf{e}_{1, j}) - \ell_2^{q, mq}(\mathbf{e}_{2, j}) \bigr)
		= \mathcal{O}_j\bigl(q^{-4}\bigr)
		\qquad \text{as } q \to +\infty.
	\]
	Indeed, for $q > j$ we have $q \nmid j$ and $mq \nmid j$ for each $m \in \set{2, \, 3, \, 4}$, so the explicit formulas~\eqref{eq:lem:estimate of linear functions:coefficients left:q not mid j} and~\eqref{eq:lem:estimate of linear functions:coefficients right:mq not mid j} give
	\[
		\ell_1^{q, mq}(\mathbf{e}_{1, j}) - \ell_2^{q, mq}(\mathbf{e}_{2, j})
		= (-1)^{j+1}\, \frac{2\, c_3\, j}{q^2}\, (1 - m^{-2})
		\parentheses[\Bigg]{
			\frac{\sin\frac{\pi}{2q}}{\sin\frac{j\pi}{q}}
			+ \frac{\sin\frac{\pi}{2mq}}{\sin\frac{j\pi}{mq}}
		}
		+ \mathcal{O}_j\bigl(q^{-4}\bigr).
	\]
	For fixed $j$ and each $m \in \set{1, \, 2, \, 3, \, 4}$, Taylor-expanding $\sin u$ at $u = 0$ gives
	\[
		\frac{\sin\frac{\pi}{2mq}}{\sin\frac{j\pi}{mq}}
		= \frac{1}{2j} + \mathcal{O}_j\bigl(q^{-2}\bigr)
		\qquad \text{as } q \to +\infty.
	\]
	Hence the common $q^{-2}$ term is proportional to $(1 - m^{-2})$, and the assumed cancellation removes it.
\end{remark}

The following corollary records the zeroth Fourier mode asymptotics.

\begin{corollary}	\label{cor:zeroth mode difference}
	For each $m \geqslant 2$, the difference of the zeroth-mode linear functionals satisfies
	\begin{equation}\label{eq:cor:zeroth mode difference}
		\ell_1^{q,mq}(\mathbf{e}_{1,0}) - \ell_2^{q,mq}(\mathbf{e}_{2,0})
		=
		-\beta\,(1-m^{-2})\,q^{-2} + \mathcal{O}(q^{-4})
		\qquad \text{as } q \to +\infty,
	\end{equation}
	where $\beta \define \frac{\pi^{3}}{48} + 2c_3$ is independent of $m$.
\end{corollary}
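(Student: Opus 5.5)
The plan is to read off the zeroth mode directly from the general formula~\eqref{eq:expression of linear functional} rather than through Lemma~\ref{lem:estimate of linear functions}, whose statement is phrased for $j \in \n$. For $j = 0$ every summand $\cos(2j s_i^{p_1,p_2}(k))$ equals $1$. This gives exactly
\[
	\ell_1^{q,mq}(\mathbf{e}_{1,0}) = q \sin\varphi_1^{q,mq}
	\qquad \text{and} \qquad
	\ell_2^{q,mq}(\mathbf{e}_{2,0}) = mq \sin\varphi_2^{q,mq}.
\]
So the claim reduces to expanding $p\sin(\frac{\pi}{2p} + \varepsilon)$ for $p = q$ and $p = mq$, with the corrections $\varepsilon_1, \varepsilon_2$ supplied by Corollary~\ref{cor:Taylor expansion of collision angle}.

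For the expansion I write $p \sin(\frac{\pi}{2p} + \varepsilon) = p\sin\frac{\pi}{2p} + p\cos\frac{\pi}{2p}\,\varepsilon + \mathcal{O}(p\,\varepsilon^2)$. The main term is
\[
	p\sin\frac{\pi}{2p} = \frac{\pi}{2} - \frac{\pi^3}{48}p^{-2} + \mathcal{O}(p^{-4}),
\]
and $p\cos\frac{\pi}{2p} = p + \mathcal{O}(p^{-1})$. By Corollary~\ref{cor:Taylor expansion of collision angle} we have $\varepsilon_1 = -c_3(1-m^{-2})q^{-3} + \mathcal{O}(q^{-5})$ and $\varepsilon_2 = c_3 m^{-1}(1-m^{-2})q^{-3} + \mathcal{O}(q^{-5})$. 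Substituting gives
\[
	\ell_1^{q,mq}(\mathbf{e}_{1,0}) = \frac{\pi}{2} - \parentheses[\Big]{\frac{\pi^3}{48} + c_3(1-m^{-2})} q^{-2} + \mathcal{O}(q^{-4})
\]
and
\[
	\ell_2^{q,mq}(\mathbf{e}_{2,0}) = \frac{\pi}{2} - \parentheses[\Big]{\frac{\pi^3}{48}m^{-2} - c_3(1-m^{-2})} q^{-2} + \mathcal{O}(q^{-4}).
\]
In the second line the factor $mq$ cancels the $m^{-1}$ in $\varepsilon_2$, leaving $c_3(1-m^{-2})q^{-2}$. These agree with the $q \mid j$ coefficients~\eqref{eq:lem:estimate of linear functions:coefficients left:q mid j} and~\eqref{eq:lem:estimate of linear functions:coefficients right:mq mid j} at $j = 0$, which serves as a consistency check.

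Subtracting, the constants $\frac{\pi}{2}$ cancel. The $q^{-2}$ coefficient of the difference is
\[
	-\frac{\pi^3}{48}(1-m^{-2}) - 2c_3(1-m^{-2}) = -\beta(1-m^{-2}).
\]
The remainders must still be checked to be $\mathcal{O}(q^{-4})$. The contribution $p\,\mathcal{O}(q^{-5})$ from the error in $\varepsilon$ is $\mathcal{O}(q^{-4})$ since $p \le 4q$, or more generally $p \le mq$ with $m$ fixed. The quadratic term $p\,\varepsilon^2$ is $\mathcal{O}(q^{-5})$. The product of the $\mathcal{O}(p^{-1})$ correction in $p\cos\frac{\pi}{2p}$ with $\varepsilon$ is $\mathcal{O}(q^{-4})$.

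No step is a real obstacle; the only care needed is bookkeeping the $m$-dependence. The implied constants depend on $m$ through Corollary~\ref{cor:Taylor expansion of collision angle}, which is harmless since $m$ is fixed. The key structural point is that the $q^{-2}$ coefficient factors exactly as $(1-m^{-2})$ times an $m$-independent constant $\beta$, which is what the statement asserts.
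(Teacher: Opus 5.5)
Your proposal is correct and matches the paper's proof: it reduces $\ell_i^{q,mq}(\mathbf{e}_{i,0})$ to $p_i\sin\varphi_i^{q,mq}$ and expands with the angle corrections from Corollary~\ref{cor:Taylor expansion of collision angle}. The $\frac{\pi^3}{48}$ terms then combine into a multiple of $(1-m^{-2})$, and the two $c_3$ corrections add to $2c_3(1-m^{-2})$. Your remainder bookkeeping uses $p_i\le mq$ with $m$ fixed, and that is exactly what is needed to keep the error at $\mathcal{O}(q^{-4})$.
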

\begin{proof}
	Since $\mathbf{e}_{i, 0} \equiv 1$, the trigonometric sum in~\eqref{eq:expression of linear functional} reduces to $p_i$, giving $\ell_i^{q,mq}(\mathbf{e}_{i, 0}) = p_i\sin\varphi_i^{q,mq}$.
	By Corollary~\ref{cor:Taylor expansion of collision angle}, $\varphi_i^{q,mq}=\frac{\pi}{2 p_i}+\varepsilon_i$ with $\varepsilon_1=-c_3(1-m^{-2})q^{-3}+\mathcal{O}(q^{-5})$ and $\varepsilon_2=c_3 m^{-1}(1-m^{-2})q^{-3}+\mathcal{O}(q^{-5})$.
	This gives
	\[
		\begin{split}
			p_1 \sin\varphi_1^{q,mq} &= \frac{\pi}{2} - \frac{\pi^{3}}{48\,q^{2}} - \frac{c_3(1-m^{-2})}{q^{2}} + \mathcal{O}(q^{-4}),\\
			p_2 \sin\varphi_2^{q,mq} &= \frac{\pi}{2} - \frac{\pi^{3}}{48\,m^{2}\,q^{2}} + \frac{c_3(1-m^{-2})}{q^{2}} + \mathcal{O}(q^{-4}).
		\end{split}
	\]
	In the difference, the geometric terms combine as $-\frac{\pi^{3}}{48}(1-m^{-2})q^{-2}$ and the $c_3$-corrections add, giving~\eqref{eq:cor:zeroth mode difference}.
\end{proof}

\subsection{The linearized isospectral operator \texorpdfstring{$\mathcal{J}$}{J}}
\label{sub:linearized isospectral operator J}

We define the \emph{linearized isospectral operator} $\mathcal{J}$ from $C^r_*(S)$ to $\ell^{\infty}$ by
\begin{equation}\label{eq:def linearized isospectral operator}
	\mathcal{J}(\mathbf{u})_{p_1, p_2} \define \ell^{p_1, p_2}(\mathbf{u}), \qquad (p_1, p_2) \in \n \times \n.
\end{equation}
If the orbit $\Gamma^{p_1, p_2}$ does not exist or is not regular, we set $\mathcal{J}(\mathbf{u})_{p_1, p_2} \define 0$.\footnote{
The boundary $\partial\Omega_{*}$ is $C^{1}$ at the junction points, so a closed polygon with a collision at a junction still obeys the reflection law, and its length belongs to $\mathcal{L}(\Omega_{*})$; the proof of Lemma~\ref{lem:length spectrum null} accounts for such trajectories. What fails at a junction is the second-order smoothness of the boundary, and with it the non-degeneracy argument of Lemma~\ref{lem:hybrid orbit nondegenerate}, so we discard these coordinates. The same trajectories are set aside for the same reason in~\cite[Lemma~5.3]{ChenKaloshinZhang2023}.
}
Since the $p_i$ collision points on the $i$-th arc are spaced $2 \varphi_i^{p_1, p_2}$ apart within an arc of length $\pi$ (see~\eqref{eq: Gamma qi s varphi} and~\eqref{eq: Gamma qi s varphi-}), we have $p_i \sin \varphi_i^{p_1, p_2} \leqslant \pi$; hence $\abs[\big]{ \ell^{p_1, p_2}(\mathbf{u}) } \leqslant 2 \pi\, \norm{\mathbf{u}}_{C^{0}(S)}$, and $\mathcal{J}$ takes values in $\ell^{\infty}$.

Throughout this article, we fix $\gamma \in (3, 4)$\footnote{
This range is necessary in the proof of Theorem~\ref{thm:standard stadium rigidity} to obtain convergent estimates. We set $\gamma = \tfrac{7}{2}$ wherever a numerical bound calls for an explicit value, and this value is not a free representative of the range: the bound certified in Lemma~\ref{lem:tail invertibility large a} exceeds $1$ for $\gamma \leqslant 3.13$.
}.
We define a subspace $X_{*, \gamma}$ of $L^1_{*}(S)$ as
\[
	X_{*, \gamma} \define \set[\Big]{ \mathbf{u} = \mathbf{u}_1 \cup \mathbf{u}_2 \in L^1_{*}(S) \describe \mathbf{u}_i = \sum_{j = 0}^{+\infty} \widehat{u}_{i, j} \mathbf{e}_{i, j}, \, \lim_{j \to +\infty} j^{\gamma} \abs[\big]{ \widehat{u}_{i, j} } = 0 \text{ for each } i \in \set{1, \, 2} },
\]
where $\mathbf{e}_{i, j}$ is the Fourier basis defined in~\eqref{eq:def of Fourier basis e_ij}.
We equip $X_{*, \gamma}$ with the norm $\matrixnorm{\cdot}[\gamma]$ defined by
\[
	\matrixnorm{\mathbf{u}}[\gamma] \define \max_{i \in \set{1, \, 2}} \max \parentheses[\Big]{ \abs{\widehat{u}_{i, 0}}, \, \sup_{j \in \n} j^{\gamma} \abs{ \widehat{u}_{i, j} } }.
\]
For each $q_0 \in \n$, we also define the \emph{tail sequence space}
\[
	h_{\gamma, q_0} \define \set[\Big]{ (u_j)_{j \geqslant q_0} \describe \lim_{j \to +\infty} j^{\gamma} \abs{u_j} = 0 },
\]
equipped with the norm $\matrixnorm{(u_j)}[\gamma, q_0] \define \sup_{j \geqslant q_0} j^{\gamma} \abs{u_j}$.
For a matrix $L = \parentheses{L_{q, j}}_{q, j \geqslant q_0}$ of complex numbers indexed by $q, j \geqslant q_0$, we also write
\[
	\matrixnorm{L}[\gamma, q_0] \define \sup_{q \geqslant q_0} q^{\gamma} \sum_{j \geqslant q_0} j^{-\gamma} \abs{L_{q, j}}
\]
for the associated matrix norm.
When $q_0 = 1$, we simply write $\matrixnorm{\cdot}[\gamma]$ for $\matrixnorm{\cdot}[\gamma, 1]$.
We record the invariance criterion used whenever such a matrix is inverted on $h_{\gamma, q_0}$.
A matrix $L$ whose columns satisfy $q^{\gamma} \abs{L_{q, j}} \to 0$ as $q \to +\infty$ for each fixed $j$ preserves $h_{\gamma, q_0}$ and acts on it with operator norm at most $\matrixnorm{L}[\gamma, q_0]$.
Indeed, $\abs{(L \mathbf{u})_q} \leqslant \sum_{j} \abs{L_{q, j}}\, j^{-\gamma}\, \matrixnorm{\mathbf{u}}[\gamma, q_0]$ gives the norm bound, and splitting this sum at $j = N_{\ast}$ verifies the decay condition, as the head tends to $0$ for each fixed $N_{\ast}$, while the tail is at most $\matrixnorm{L}[\gamma, q_0]\, \sup_{j > N_{\ast}} j^{\gamma} \abs{u_j}$, small for $N_{\ast}$ large.
We write $c_0$ for the Banach space of real sequences converging to zero, equipped with the supremum norm.
The weighting map $(u_j) \mapsto (j^{\gamma} u_j)$ identifies $h_{\gamma, q_0}$ isometrically with $c_0$, while $X_{*, \gamma}$ is a closed subspace of $\real^2 \oplus c_0 \oplus c_0$.
Hence both $(X_{*, \gamma}, \matrixnorm{\cdot}[\gamma])$ and $(h_{\gamma, q_0}, \matrixnorm{\cdot}[\gamma, q_0])$ are separable Banach spaces.

\begin{remark}\label{rem:operator well-defined on subspace}
	Since $\gamma \in (3, 4)$, we have $C^{4}_{*}(S) \subseteq X_{*, \gamma}$.
	Indeed, integrating
	\[
		\int_{-\pi/2}^{\pi/2} \mathbf{u}_i(s) \cos(2 j s)\, \mathrm{d}s
	\]
	by parts four times, the first and third boundary terms vanish because $\sin(j\pi) = 0$, and the second because $\mathbf{u}_i'$ is odd and $\mathbf{u}_i'(\pm \pi/2) = 0$.
	The fourth boundary term contributes $-(-1)^{j}\, \mathbf{u}_i'''(\pi/2) / (4 \pi j^{4})$ to $\widehat{u}_{i, j}$, and the remaining integral is $o(j^{-4})$ by the Riemann--Lebesgue lemma.
	Hence $\widehat{u}_{i, j} = \mathcal{O}(j^{-4}) = o(j^{-\gamma})$, the last step using $\gamma < 4$.
	Both exponents are sharp.
	No condition of the class forces $\mathbf{u}_i'''(\pm \pi/2) = 0$, so $j^{4}\, \widehat{u}_{i, j}$ need not tend to $0$ and the inclusion fails at $\gamma = 4$: for $\mathbf{u}_1 = -\mathbf{u}_2 = (\frac{\pi^{2}}{4} - s^{2})^{2} \in C^{4}_{*}(S)$ one computes $\widehat{u}_{1, j} = -3 (-1)^{j} j^{-4}$.
	In the other direction, $\mathbf{u}_i \in C^{3}$ would give only $\widehat{u}_{i, j} = o(j^{-3})$, which is not $o(j^{-\gamma})$ for $\gamma > 3$; so $r = 4$ cannot be lowered within this scheme.
	Conversely, each $\mathbf{u} \in X_{*, \gamma}$ is of class $C^{2}$, with $\mathbf{u}_i$ even and $\mathbf{u}_i'(\pm \pi/2) = 0$, since $\sum_{j} j^{2}\, \abs{\widehat{u}_{i, j}} < +\infty$ for $\gamma > 3$.

	For each $\mathbf{u} \in X_{*, \gamma}$, the decay $\widehat{u}_{i, j} = o(j^{-\gamma})$ gives $\sum_{j} \abs{\widehat{u}_{i, j}} < +\infty$, so the Fourier series $\sum_{j = 0}^{+\infty} \widehat{u}_{i, j}\, \mathbf{e}_{i, j}$ converges uniformly on $S_i$, and we identify $\mathbf{u}_i$ with its continuous sum.
	Each $\ell_i^{p_1, p_2}$ is a bounded linear functional on $C^0(S_i)$, so we may interchange it with the sum, giving
	\[
		\ell^{p_1, p_2}(\mathbf{u})
		= \sum_{i \in \set{1, \, 2}} \ell_i^{p_1, p_2}(\mathbf{u}_i)
		= \sum_{i \in \set{1, \, 2}} \sum_{j = 0}^{+\infty} \widehat{u}_{i, j}\, \ell_i^{p_1, p_2}(\mathbf{e}_{i, j}).
	\]
	Hence $\ell^{p_1, p_2}$ is well-defined on $X_{*, \gamma}$.

\end{remark}

\subsection{The weighted combination \texorpdfstring{$\lambda_{j, q}$}{lambda}}
\label{sub:weighted combination}

Theorem~\ref{thm:standard stadium finite-dimensional deformations} below and Theorem~\ref{thm:standard stadium rigidity} of the next section are read off from one and the same weighted combination of the rows of $\mathcal{J}$ indexed by $\Gamma^{q, 2q}$, $\Gamma^{q, 3q}$ and $\Gamma^{q, 4q}$.
We fix that combination here, together with the two arithmetic bounds it obeys; the two theorems then differ only in how they estimate the error committed in replacing the collision angles by their limiting values.

For $q \in \n$ and $m \in \set{2, \, 3, \, 4}$ set
\begin{equation}\label{eq:def:weights x2 x3 x4}
	x_2^{(q)} \define (-1)^{q + 1}\frac{256}{27\pi}, \qquad
	x_3^{(q)} \define (-1)^{q + 1}\frac{2}{\pi}, \qquad
	x_4^{(q)} \define -x_2^{(q)},
\end{equation}
and, whenever the orbits $\Gamma^{q, mq}$ exist and are regular,
\begin{equation}\label{eq:def:lambda j q}
	\Diffl{j}{q}{m} \define \ell_1^{q, mq}(\mathbf{e}_{1, j}) - \ell_2^{q, mq}(\mathbf{e}_{2, j}),
	\qquad
	\lambda_{j, q} \define \sum_{m \in \set{2, \, 3, \, 4}} x_m^{(q)}\, \Diffl{j}{q}{m}
	\qquad (j \in \n_0).
\end{equation}
By direct computation the weights satisfy
\begin{equation}\label{eq:weights identities}
	\sum_{m \in \set{2, \, 3, \, 4}} x_m^{(q)} = (-1)^{q+1}\frac{2}{\pi},
	\qquad
	\sum_{m \in \set{2, \, 3, \, 4}} x_m^{(q)}\bigl(1 - m^{-2}\bigr) = 0.
\end{equation}
The second identity is what the weights are chosen for: it cancels every error term proportional to $1 - m^{-2}$, and such terms carry the leading correction in the angle asymptotics of Section~\ref{sec:Hybrid Periodic Orbit on Bunimovich Stadia}.
The two identities leave a one-parameter family of admissible weights; \eqref{eq:def:weights x2 x3 x4} is the member singled out by $x_2^{(q)} = -x_4^{(q)}$, a choice discussed in Remark~\ref{rem:choice of weights}.

At the limiting angles $\pi/(2 p_i)$ the entry $\lambda_{sq, q}$ reduces, up to sign, to the value of the arithmetic function
\begin{equation}\label{eq:def:P}
	P(s) \define \frac{128}{27}\, d_2(s) + d_3(s) - \frac{128}{27}\, d_4(s),
	\qquad
	d_m(s) \define
	\begin{cases}
		(-1)^s & \text{if } m \nmid s, \\
		(-1)^s - (-1)^{s/m} & \text{if } m \mid s,
	\end{cases}
\end{equation}
and the two properties recorded next are all that either theorem uses.

\begin{lemma}\label{lem:P bounds}
	Let $P$ be as in~\eqref{eq:def:P}. Then:
	\begin{enumerate}
		\item $\abs{P(s)} \leqslant 1$ if $s$ is odd or $8 \mid s$; $\abs{P(s)} \leqslant \tfrac{155}{27}$ if $s \equiv 2 \pmod{4}$; and $\abs{P(s)} \leqslant \tfrac{256}{27}$ if $s \equiv 4 \pmod{8}$.

		\item For every $\gamma > 1$,
		\[
			\sum_{s = 2}^{+\infty} s^{-\gamma}\, \abs{P(s)}
			\leqslant
			\Bigl[(1 - 2^{-\gamma})\bigl(1 + \tfrac{155}{27}\, 2^{-\gamma} + \tfrac{256}{27}\, 2^{-2\gamma}\bigr) + 2^{-3\gamma}\Bigr] \zeta(\gamma) - 1,
		\]
		and for $\gamma = \tfrac{7}{2}$ the right-hand side is bounded by $0.6252$.
	\end{enumerate}
\end{lemma}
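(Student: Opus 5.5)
The plan is to evaluate $P(s)$ explicitly by cases on the $2$-adic valuation of $s$ and on divisibility by $3$, and then sum residue class by residue class.

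For part (1), note that $d_m(s) = (-1)^s - \mathbf{1}_{m\mid s}(-1)^{s/m}$. Using $\tfrac{128}{27} + 1 - \tfrac{128}{27} = 1$, this gives
\[
	P(s) = (-1)^s - \tfrac{128}{27}\,\mathbf{1}_{2\mid s}(-1)^{s/2} - \mathbf{1}_{3\mid s}(-1)^{s/3} + \tfrac{128}{27}\,\mathbf{1}_{4\mid s}(-1)^{s/4}.
\]
I would run through the cases.
\begin{itemize}
	\item If $s$ is odd, then only the first and third terms can appear. When $3\mid s$, $s/3$ is odd, so $P(s) = -1+1 = 0$; otherwise $P(s) = -1$. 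Either way $\abs{P}\leqslant 1$.
	\item If $s\equiv 2 \pmod 4$, then $s/2$ is odd, so $P(s) = 1 + \tfrac{128}{27} - \mathbf{1}_{3\mid s}(-1)^{s/3}$. Since $s/3$ is even when $3\mid s$, this is $\tfrac{155}{27}$ or $\tfrac{128}{27}$.
	\item If $s\equiv 4 \pmod 8$, then $s/2$ is even and $s/4$ is odd, so $P(s) = 1 - \tfrac{128}{27} - \tfrac{128}{27} - \mathbf{1}_{3\mid s} = -\tfrac{229}{27}$ or $-\tfrac{256}{27}$.
	\item If $8\mid s$, the two $\tfrac{128}{27}$ terms cancel, leaving $1-\mathbf{1}_{3\mid s}\in\{0,1\}$.
\end{itemize}
This proves (1), and the computation is routine.

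For part (2), I bound $\abs{P(s)}$ by the case constants $c(s)\in\{1,\tfrac{155}{27},\tfrac{256}{27},1\}$ according to the four classes. Writing $s=2^k t$ with $t$ odd, I sum over the classes.
\begin{itemize}
	\item Odd $s$ contribute at most $\sum_{t \text{ odd}} t^{-\gamma} = (1-2^{-\gamma})\zeta(\gamma)$.
	\item The class $s\equiv 2 \pmod 4$ contributes at most $\tfrac{155}{27}2^{-\gamma}(1-2^{-\gamma})\zeta(\gamma)$.
	\item The class $s\equiv 4 \pmod 8$ contributes at most $\tfrac{256}{27}2^{-2\gamma}(1-2^{-\gamma})\zeta(\gamma)$.
	\item The class $8\mid s$ contributes at most $2^{-3\gamma}\zeta(\gamma)$.
\end{itemize}
Adding these gives the bracket times $\zeta(\gamma)$. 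The sum starts at $s=2$ rather than $s=1$, and $\abs{P(1)}=1$ is exactly the first term of the odd series, so subtracting it yields the $-1$.

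The main obstacle is the numerical claim at $\gamma=\tfrac72$. I would use $2^{-7/2}\approx 0.0883883$, $2^{-7}=0.0078125$ and $2^{-21/2}\approx 0.00069053$. The bracket is then approximately
\[
	0.911612\,(1+0.507414+0.074063)+0.000691 \approx 0.911612\times 1.581477+0.000691 \approx 1.442394.
\]
For the zeta value I take $\zeta(7/2)\approx 1.126734$, certified by summing the first few terms and using the integral-test tail bound $\sum_{n>N} n^{-7/2}\leqslant \tfrac{2}{5}N^{-5/2}$. This gives a product of about $1.625176$, so the right-hand side is about $0.6252$.

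The margin here is tiny, so I would carry rigorous upper bounds throughout: take $\zeta(7/2)\leqslant 1.12674$, and round the bracket upward. If the stated $0.6252$ still turns out too tight with crude rounding, the fallback is to keep the exact values of $P$ on multiples of $3$, where $\abs{P}$ is smaller than the case constant, which gives extra room.
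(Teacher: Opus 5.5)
Part (1) and the residue-class summation giving the inequality in (2) are correct and match the paper's proof. The paper also splits by the $2$-adic valuation of $s$ and sums odd $t$ via $(1-2^{-\gamma})\zeta(\gamma)$. It obtains $2^{-3\gamma}\zeta(\gamma)$ for $8\mid s$ as a geometric series in $k$, where you use $8^{-\gamma}\zeta(\gamma)$ directly. Your explicit values of $P(s)$ in each class are also right.

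\textbf{The gap.} The last sentence of (2) is a claim about a specific number, and the step you designate as rigorous does not prove it.
\begin{itemize}
\item \emph{Size of the margin.} With $x = 2^{-7/2}$ the bracket is exactly $B = (3557 + 16155\,x)/3456 \approx 1.4423940$. Since $\zeta(7/2) = 1.1267338\ldots$, the right-hand side is $B\,\zeta(7/2) - 1 \approx 0.625194$. The margin below $0.6252$ is only about $6\times 10^{-6}$.
\item \emph{Your certified bound fails.} Taking $\zeta(7/2) \leqslant 1.12674$ gives $1.12674\,B - 1 \approx 0.625203 > 0.6252$. You need $\zeta(7/2)$ certified below about $1.1267378$.
\item \emph{The crude tail is too weak for a few terms.} The bound $\sum_{n>N} n^{-7/2} \leqslant \tfrac{2}{5}N^{-5/2}$ overshoots by roughly $\tfrac12 N^{-7/2}$. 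With it you must sum about thirty terms, not ``the first few''.
\item \emph{A cheaper certificate.} Convexity of $x^{-7/2}$ gives $\sum_{n>N} n^{-7/2} \leqslant \int_{N+1/2}^{\infty} x^{-7/2}\,\mathrm{d}x = \tfrac{2}{5}(N+\tfrac12)^{-5/2}$. With $N=10$ this yields $\zeta(7/2) \leqslant 1.1267376$, hence a right-hand side $\leqslant 0.6251996$. This requires summing the ten terms, and rounding $B$ upward, to about $10^{-8}$.
\item \emph{Arithmetic slips.} Two slips hide how thin the margin is. First, $\tfrac{256}{27}\,2^{-7} = \tfrac{2}{27} \approx 0.074074$, not $0.074063$. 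Second, $1.442394 \times 1.126734 \approx 1.625194$, not $1.625176$; this one makes the margin look four times larger than it is.
\end{itemize}

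\textbf{The fallback.} Using the exact values of $P$ on multiples of $3$ tightens the bound on the left-hand sum $\sum_{s\geqslant 2} s^{-7/2}\lvert P(s)\rvert$. The odd multiples of $3$ alone free up about $0.022$. This easily gives $0.6252$ even with a crude bound on $\zeta(7/2)$, and that is all the later arguments use. But the displayed right-hand side does not involve $P$. So the fallback says nothing about the lemma's claim that this expression is at most $0.6252$. The paper's proof simply asserts the numerical value, so you must supply a precise certification like the one above yourself.
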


\begin{proof}
	(i) Since $d_3(s) = (-1)^s$ for $3 \nmid s$ and $d_3(s) = 0$ for $3 \mid s$, we have $d_3(s) \in \set{-1, \, 0}$ for $s$ odd and $d_3(s) \in \set{0, \, 1}$ for $s$ even.
	We distinguish four cases according to the $2$-adic valuation of $s$.
	If $s$ is odd, then $d_2(s) = d_4(s) = -1$, so $P(s) = d_3(s)$ and $\abs{P(s)} \leqslant 1$.
	If $s = 2t$ with $t$ odd, then $d_2(s) = 2$ and $d_4(s) = 1$, so $P(s) = \tfrac{128}{27} + d_3(s)$ and $\abs{P(s)} \leqslant \tfrac{155}{27}$.
	If $s = 4t$ with $t$ odd, then $d_2(s) = 0$ and $d_4(s) = 2$, so $P(s) = d_3(s) - \tfrac{256}{27}$ and $\abs{P(s)} \leqslant \tfrac{256}{27}$.
	If $8 \mid s$, then $d_2(s) = d_4(s) = 0$, so $P(s) = d_3(s)$ and $\abs{P(s)} \leqslant 1$.

	(ii) Grouping the terms $s \geqslant 2$ according to the four cases of~(i) and applying those bounds,
	\[
		\sum_{s = 2}^{+\infty} s^{-\gamma}\, \abs{P(s)}
		\leqslant
		\sum_{\substack{s \geqslant 3 \\ s\, \mathrm{odd}}} s^{-\gamma}
		+ \frac{155}{27} \sum_{\substack{t \geqslant 1 \\ t\, \mathrm{odd}}} (2t)^{-\gamma}
		+ \frac{256}{27} \sum_{\substack{t \geqslant 1 \\ t\, \mathrm{odd}}} (4t)^{-\gamma}
		+ \sum_{k = 3}^{+\infty} \sum_{\substack{t \geqslant 1 \\ t\, \mathrm{odd}}} (2^k t)^{-\gamma}.
	\]
	The first three sums are evaluated by $\sum_{t \geqslant 1,\, t\, \mathrm{odd}} t^{-\gamma} = (1 - 2^{-\gamma})\,\zeta(\gamma)$, the trailing $-1$ in the stated bound removing the term $s = 1$ from the first.
	In the fourth, the geometric series $\sum_{k \geqslant 3} 2^{-k\gamma} = 2^{-3\gamma}/(1 - 2^{-\gamma})$ cancels the factor $1 - 2^{-\gamma}$, leaving $2^{-3\gamma}\, \zeta(\gamma)$.
	Collecting the four contributions gives the displayed bound.
\end{proof}

\subsection{Finite-dimensionality of the kernel}
\label{sub:Analysis of linearized isospectral operators on the deformation functions}

The lemma below establishes Condition~(H2) of Subsection~\ref{sub:class Mr and deformation function}.
Indeed, for the deformation function of a dynamically isospectral family, Condition~(H1) holds by that subsection, and the hypothesis $\ell^{q, q}(\mathbf{n}) = 0$ holds by Lemma~\ref{lem:isospectral orbit functionals}.

\begin{lemma} \label{lem:standard stadium circular orbits}
	Let $\mathbf{n} = \mathbf{n}_1 \cup \mathbf{n}_2$ with $\mathbf{n}_1$ and $\mathbf{n}_2$ satisfying Condition~(H1), and suppose that $\ell^{q, q}(\mathbf{n}) = 0$ for all $q \in \n$.
	Then for all $j \in \n_{0}$,
	\[
		\widehat{\mathbf{n}}_{1, j} = -\widehat{\mathbf{n}}_{2, j},
	\]
	where $\widehat{\mathbf{n}}_{i, j}$ are the Fourier coefficients of $\mathbf{n}_i$ in the basis $\mathbf{e}_{i, j}$ for $i \in \set{1, \, 2}$.
	In particular, the function $\mathbf{n}$ satisfies Condition~(H2).
\end{lemma}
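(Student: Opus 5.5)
The plan is to rewrite the circular-orbit conditions $\ell^{q, q}(\mathbf{n}) = 0$ as an explicit linear system on the sums $c_j \define \widehat{\mathbf{n}}_{1, j} + \widehat{\mathbf{n}}_{2, j}$, and then to invert that system with a weighted maximum argument.

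\emph{Reduction to a system in $(c_j)$.} By Lemma~\ref{lem:Taylor expansion of collision angle}, $\Gamma^{q, q}$ exists and is regular for every $q \in \n$, with $\varphi_1^{q, q} = \varphi_2^{q, q} = \frac{\pi}{2q}$. I will first check absolute convergence. By (H1) with $r = 4$, Remark~\ref{rem:operator well-defined on subspace} (which uses only (H1) in its decay argument) gives $\widehat{\mathbf{n}}_{i, j} = \mathcal{O}(j^{-4})$, so the Fourier series converge uniformly. We may therefore expand $\ell^{q, q}(\mathbf{n})$ termwise using~\eqref{eq:expression of linear functional}. The two arcs then contribute identical coefficients, and
\[
	\ell^{q, q}(\mathbf{n}) = \sin\tfrac{\pi}{2q} \sum_{j \geqslant 0} c_j\, f_q\parentheses[\Big]{\frac{j\pi}{q}}.
\]
As computed in the proof of Lemma~\ref{lem:Taylor expansion of auxiliary variable}, $f_q(j\pi/q)$ vanishes unless $q \mid j$, and equals $(-1)^{j + j/q}\, q$ when $j = sq$; for $j = 0$ it equals $q$. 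Dividing by $q \sin\frac{\pi}{2q} \neq 0$, the hypothesis becomes
\begin{equation*}
	c_0 + \sum_{s \geqslant 1} (-1)^{sq + s}\, c_{sq} = 0 \qquad \text{for all } q \in \n .
\end{equation*}

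\emph{The zeroth mode.} Since $\abs{c_j} \leqslant C j^{-4}$, the tail satisfies $\sum_{s \geqslant 1} \abs{c_{sq}} \leqslant C \zeta(4)\, q^{-4} \to 0$. Letting $q \to +\infty$ therefore forces $c_0 = 0$. The system now reads $\sum_{s \geqslant 1} (-1)^{sq+s} c_{sq} = 0$ for each $q$, and hence
\[
	\abs{c_q} \leqslant \sum_{s \geqslant 2} \abs{c_{sq}} .
\]

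\emph{Vanishing of the higher modes.} This is the main step, because the system is infinite and cannot be solved triangularly from a minimal index. I will use a maximum argument in a weighted norm instead. Fix $\gamma \in (3, 4)$, so that $j^{\gamma} \abs{c_j} \to 0$. Suppose, for contradiction, that some $c_j \neq 0$ with $j \geqslant 1$. Then $A \define \sup_{j \geqslant 1} j^{\gamma} \abs{c_j} > 0$ is attained at some index $j_*$. Applying the inequality above with $q = j_*$ gives
\[
	j_*^{-\gamma} A = \abs{c_{j_*}} \leqslant \sum_{s \geqslant 2} A\, (s j_*)^{-\gamma} = A\, j_*^{-\gamma}\, \bigl(\zeta(\gamma) - 1\bigr) .
\]
For $\gamma > 3$ we have $\zeta(\gamma) - 1 < \zeta(3) - 1 < 1$, which is a contradiction. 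Hence $c_j = 0$ for all $j \geqslant 1$, and together with $c_0 = 0$ this gives $\widehat{\mathbf{n}}_{1, j} = -\widehat{\mathbf{n}}_{2, j}$ for all $j \in \n_0$.

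\emph{Condition~(H2).} By the definition of the zeroth Fourier coefficient, $\int \mathbf{n}_1\, \mathrm{d}s_1 + \int \mathbf{n}_2\, \mathrm{d}s_2 = \pi\, c_0 = 0$, which is exactly~(H2).
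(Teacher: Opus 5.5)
Your proof is correct and follows the paper's argument. You use the same termwise expansion of $\ell^{q,q}(\mathbf{n})$, the same $q \to +\infty$ limit to get $c_0 = 0$, and the same weighted bound $\sum_{s \geqslant 2} s^{-\gamma} = \zeta(\gamma) - 1 < 1$. Your supremum argument is simply the injectivity half of the paper's Neumann-series inversion of $\Delta$ on $h_{\gamma, 1}$; it does not even need the supremum to be attained, since $A \leqslant (\zeta(\gamma) - 1) A$ already forces $A = 0$.
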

\begin{proof}
	Since each $\mathbf{n}_i$ is $C^{4}$ and even with $\mathbf{n}_i'(\pm \pi/2) = 0$, four integrations by parts give $\widehat{\mathbf{n}}_{i, j} = \mathcal{O}(j^{-4})$, as in Remark~\ref{rem:operator well-defined on subspace}; in particular $\sum_{j} \abs[\big]{ \widehat{\mathbf{n}}_{i, j} } < +\infty$, and the Fourier series of $\mathbf{n}_i$ converges uniformly to $\mathbf{n}_i$.
	For each $q \in \n$, consider the hybrid periodic orbit $\Gamma^{q, q}$; its existence, its regularity, and the angles $\varphi_1 = \varphi_2 = \pi/(2q)$ are given by Lemma~\ref{lem:Taylor expansion of collision angle}.
	Recall from~\eqref{eq:def of Fourier basis e_ij} that $\mathbf{e}_{i, j}(s_i) = \cos(2j s_i)$.
	Hence
	\[
		\ell_1^{q, q}(\mathbf{e}_{1, j}) = \ell_2^{q, q}(\mathbf{e}_{2, j})
		= (-1)^{j + j/q}\, \delta_{q \mid j}\, q \sin\frac{\pi}{2q}
		\qquad \text{for each } j \in \n_0.
	\]
	Applying the finite sum $\ell^{q, q}$ termwise to the uniformly convergent Fourier series of $\mathbf{n}$, we expand the hypothesis $\ell^{q, q}(\mathbf{n}) = 0$ as
	\begin{equation} \label{eq:temp:lem:standard stadium circular orbits:n1+n2=0}
			0 = \sum_{j = 0}^{+\infty} \widehat{\mathbf{n}}_{1, j} \ell_1^{q, q}(\mathbf{e}_{1, j}) + \sum_{j = 0}^{+\infty} \widehat{\mathbf{n}}_{2, j} \ell_2^{q, q}(\mathbf{e}_{2, j})
			= q \sin \frac{\pi}{2q} \sum_{s = 0}^{+\infty} (-1)^{sq + s} \parentheses[\big]{ \widehat{\mathbf{n}}_{1, sq} + \widehat{\mathbf{n}}_{2, sq} }
	\end{equation}
	for each $q \in \n$.
	Set $w_j \define \widehat{\mathbf{n}}_{1, j} + \widehat{\mathbf{n}}_{2, j}$ for $j \in \n_0$.
	The factor $q \sin \frac{\pi}{2q}$ is nonzero; dividing~\eqref{eq:temp:lem:standard stadium circular orbits:n1+n2=0} by it and isolating the $s = 0$ term, we obtain
	\[
		w_0 = -\sum_{s = 1}^{+\infty} (-1)^{s(q+1)}\, w_{sq} = \mathcal{O}(q^{-4}),
	\]
	since $w_{sq} = \mathcal{O}\parentheses[\big]{ (sq)^{-4} }$; as $w_0$ does not depend on $q$, letting $q \to +\infty$ gives $w_0 = 0$, which is Condition~(H2).
	Returning to a fixed $q \in \n$ and substituting $j = sq$ for $s \geqslant 1$, we obtain
	\begin{equation} \label{eq:temp:lem:standard stadium circular orbits:sum j}
		\sum_{j = 1}^{+\infty} (-1)^{j + \frac{j}{q}} \delta_{q \mid j} \parentheses[\big]{ \widehat{\mathbf{n}}_{1, j} + \widehat{\mathbf{n}}_{2, j} } = 0
		\qquad \text{for each } q \in \n.
	\end{equation}
	Let $\mathbf{w} \define (w_j)_{j=1}^{+\infty}$.
	Since $\widehat{\mathbf{n}}_{i, j} = \mathcal{O}(j^{-4}) = o(j^{-\gamma})$, the sequence $\mathbf{w}$ lies in $h_{\gamma, 1}$.
	Multiplying~\eqref{eq:temp:lem:standard stadium circular orbits:sum j} by $(-1)^{1+q}$, we rewrite it as the matrix equation
	\begin{equation} \label{eq:dn1=-dn2}
		\Delta \mathbf{w} = 0,
	\end{equation}
	where $\Delta = \parentheses[\big]{ \Delta_{qj} }_{q,j \in \n}$ is defined by $\Delta_{qj} \define (-1)^{ 1 + q + j + \frac{j}{q} }\delta_{q \mid j}$.
	The diagonal entries are $\Delta_{qq} = 1$, and the non-zero off-diagonal entries occur at $j = sq$ with $s \geqslant 2$.
	The matrix norm $\matrixnorm{\cdot}[\gamma]$ then gives
	\[
		\matrixnorm{\Delta - \operatorname{Id}}[\gamma] 
		= \sup_{q \in \n} \sum_{j \neq q} q^\gamma j^{-\gamma} \abs{\Delta_{qj}}
		= \sup_{q \in \n} \sum_{s = 2}^{+\infty} q^\gamma (sq)^{-\gamma} 
		= \sum_{s = 2}^{+\infty} s^{-\gamma} = \zeta(\gamma) - 1.
	\]
	Since $\gamma > 3$, we have $\matrixnorm{\Delta - \operatorname{Id}}[\gamma] \leqslant \zeta(\gamma) - 1 < \zeta(3) - 1 < 0.21 < 1$; moreover, each column of $\Delta - \operatorname{Id}$ is eventually zero, as $(\Delta - \operatorname{Id})_{qj} \neq 0$ forces $q \leqslant j/2$, so $\Delta$ preserves $h_{\gamma, 1}$ by the invariance criterion of Subsection~\ref{sub:linearized isospectral operator J} and is invertible on it via its Neumann series.
	It follows from~\eqref{eq:dn1=-dn2} that $\mathbf{w} = 0$, that is,
	\[
		\widehat{\mathbf{n}}_{1, j} = -\widehat{\mathbf{n}}_{2, j} \qquad \text{for each } j \in \n.
	\]
	Combined with $w_0 = 0$, this completes the proof.
\end{proof}

As a consequence, the deformation function of every dynamically isospectral family belongs to the kernel of $\mathcal{J}$.

\begin{lemma}    \label{lem:isospectral deformation in kernel}
	If a $C^{1}$ family $\set{\Omega_{\tau}}_{\abs{\tau} \leqslant 1}$ in $\mathcal{M}^{r}$ with $\Omega_{0} = \Omega_{*}$ is dynamically isospectral, then its deformation function $\mathbf{n} = \mathbf{n}_1 \cup \mathbf{n}_2$ lies in $C^{r}_{*}(S)$ and in $\ker \mathcal{J}$.
\end{lemma}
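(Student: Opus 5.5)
The plan is to assemble facts already established, in three steps. First, membership in $C^{r}_{*}(S)$. The discussion following Definition~\ref{def:Mr} shows that the deformation function $\mathbf{n} = \mathbf{n}_1 \cup \mathbf{n}_2$ of any $C^{1}$ family in $\mathcal{M}^{r}$ through $\Omega_{*}$ satisfies Condition~(H1): evenness comes from (M3), vanishing at $\pm\pi/2$ from the fixed junction points, vanishing derivative there from (M2), and $C^{r}$ regularity from the $C^{1}$ dependence on $\tau$ with values in $C^{r}$. It remains to check (H2). For this I will invoke Lemma~\ref{lem:isospectral orbit functionals} for the pairs $(q,q)$. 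This is legitimate because Lemma~\ref{lem:Taylor expansion of collision angle} guarantees that $\Gamma^{q,q}$ exists and is regular for every $q \in \n$; hence $\ell^{q,q}(\mathbf{n}) = 0$ for all $q$. Lemma~\ref{lem:standard stadium circular orbits} then applies and yields (H2), together with the antisymmetry $\widehat{\mathbf{n}}_{1,j} = -\widehat{\mathbf{n}}_{2,j}$. Thus $\mathbf{n} \in C^{r}_{*}(S)$.

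Second, membership in $\ker\mathcal{J}$. The operator is defined coordinatewise by~\eqref{eq:def linearized isospectral operator}, and I split the pairs $(p_1,p_2) \in \n\times\n$ into two cases.
\begin{enumerate}
\item If $\Gamma^{p_1,p_2}$ fails to exist or is not regular, the coordinate $\mathcal{J}(\mathbf{n})_{p_1,p_2}$ is $0$ by convention.
\item If $\Gamma^{p_1,p_2}$ exists and is regular, the coordinate equals $\ell^{p_1,p_2}(\mathbf{n})$, and this vanishes by Lemma~\ref{lem:isospectral orbit functionals}. That lemma holds for every $a>0$ and requires only dynamical isospectrality and regularity.
\end{enumerate}
So every coordinate vanishes, and $\mathcal{J}(\mathbf{n}) = 0$.

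Third, a well-definedness check. $\mathcal{J}(\mathbf{n})$ lies in $\ell^\infty$ by the bound $|\ell^{p_1,p_2}(\mathbf{u})| \leqslant 2\pi\|\mathbf{u}\|_{C^0}$ noted after the definition. Also, the uniqueness part of Lemma~\ref{lem:Taylor expansion of collision angle} makes each functional unambiguous.

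I do not expect a real obstacle, since every step cites an earlier result. The only point needing care is the order of the argument. (H2) cannot be assumed beforehand; it must be derived from the circular orbits via Lemma~\ref{lem:standard stadium circular orbits}, whose hypotheses need only (H1) and the vanishing of $\ell^{q,q}(\mathbf{n})$. Only after that is $\mathbf{n}$ in the domain $C^r_*(S)$ of $\mathcal{J}$.
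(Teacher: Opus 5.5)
Your proposal is correct and follows the paper's proof essentially step for step. You take (H1) from the discussion after Definition~\ref{def:Mr}, and you get vanishing of every regular-orbit coordinate of $\mathcal{J}(\mathbf{n})$ from Lemma~\ref{lem:isospectral orbit functionals}. You then obtain (H2) from Lemma~\ref{lem:standard stadium circular orbits} via the circular orbits $\Gamma^{q,q}$, which exist and are regular for every $q$. Your extra well-definedness remarks are harmless but not needed.
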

\begin{proof}
	Condition~(H1) holds for $\mathbf{n}$ by Subsection~\ref{sub:class Mr and deformation function}, and Lemma~\ref{lem:isospectral orbit functionals} gives $\ell^{p_1, p_2}(\mathbf{n}) = 0$ for every pair $\juxtapose{p_1}{p_2} \in \n$ such that $\Gamma^{p_1, p_2}$ exists and is regular.
	Since the circular orbits $\Gamma^{q, q}$ exist and are regular for every $q \in \n$, Lemma~\ref{lem:standard stadium circular orbits} shows that $\mathbf{n}$ satisfies Condition~(H2); hence $\mathbf{n} \in C^{r}_{*}(S)$ and $\mathbf{n} \in \ker \mathcal{J}$.
\end{proof}

The kernel of $\mathcal{J}$ is moreover finite-dimensional for every $a > 0$.

\begin{theorem}    \label{thm:standard stadium finite-dimensional deformations}
	For every $a > 0$, the kernel of the linearized isospectral operator $\mathcal{J}$ (defined in~\eqref{eq:def linearized isospectral operator}) on $C^r_*(S)$ is finite-dimensional.
\end{theorem}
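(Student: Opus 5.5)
Let $\mathbf{n} \in C^r_*(S) \cap \ker\mathcal{J}$. We first reduce to a single sequence, then show that a tail operator on $h_{\gamma,q_0}$ is invertible, so that $\mathbf{n}$ is determined by finitely many low modes.

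\emph{Reduction.} Since $\ell^{q,q}(\mathbf{n})=0$ for all $q$, Lemma~\ref{lem:standard stadium circular orbits} gives $\widehat{\mathbf{n}}_{1,j}=-\widehat{\mathbf{n}}_{2,j}$. Write $\widehat{\mathbf{n}}_j\define\widehat{\mathbf{n}}_{1,j}$. By Remark~\ref{rem:operator well-defined on subspace}, $(\widehat{\mathbf{n}}_j)_{j\geqslant 1}\in h_{\gamma,1}$. For each $m\in\set{2,3,4}$ and $q\geqslant q_\ast(m,a)$, the orbit $\Gamma^{q,mq}$ is regular by Lemma~\ref{lem:Taylor expansion of collision angle}. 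Hence $\ell^{q,mq}(\mathbf{n})=\sum_{j\geqslant0}\widehat{\mathbf{n}}_j\,\Diffl{j}{q}{m}=0$. Forming the combination with the weights~\eqref{eq:def:weights x2 x3 x4} gives, for all $q\geqslant Q\define\max_m q_\ast(m,a)$,
\[
	\sum_{j\geqslant 0}\lambda_{j,q}\,\widehat{\mathbf{n}}_j=0 .
\]

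\emph{Entries of $\lambda_{j,q}$.} Lemma~\ref{lem:estimate of linear functions} and the identities~\eqref{eq:weights identities} control the entries. On the diagonal $j=q$, the left term gives $(-1)^{q+1}\frac{\pi}{2}$ and the right term is $\mathcal{O}(q^{-1})$. The total is therefore $\lambda_{q,q}=x\cdot\bigl((-1)^{q+1}\tfrac{\pi}{2}\bigr)$ summed against the weights, which equals $1+\mathcal{O}(q^{-1})$ by the first identity in~\eqref{eq:weights identities}. For $j=sq$ with $s\geqslant 2$, Remark~\ref{rem:estimate of linear functions:special cases}(i) gives $\lambda_{sq,q}=\pm P(s)+\mathcal{O}(s^2q^{-2})$. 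For $q\nmid j$, the leading terms vanish and only the $q^{-2}$ coefficients $a_2,b_2$ survive. These are multiples of $(1-m^{-2})$ times $j\sin\frac{\pi}{2q}/\sin\frac{j\pi}{q}$ and $j\sin\frac{\pi}{2mq}/\sin\frac{j\pi}{mq}$. The $m$-independent pieces cancel by the second identity in~\eqref{eq:weights identities}. The parts that depend on $m$ through $mq\mid j$ or through $\sin(j\pi/(mq))$ remain, together with the $\mathcal{O}(j^2q^{-4})$ remainder. The $j=0$ entry is $\mathcal{O}(q^{-4})$ by Corollary~\ref{cor:zeroth mode difference} and the cancellation identity.

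\emph{Tail invertibility.} Fix a large $q_0\geqslant Q$, and split the sum into $j<q_0$ and $j\geqslant q_0$. This gives $L\mathbf{v}=-B\mathbf{h}$, where $\mathbf{v}=(\widehat{\mathbf{n}}_j)_{j\geqslant q_0}\in h_{\gamma,q_0}$, $\mathbf{h}=(\widehat{\mathbf{n}}_j)_{j<q_0}\in\real^{q_0}$, and $L_{q,j}=\lambda_{j,q}$ for $q,j\geqslant q_0$. We then bound $\matrixnorm{L-\operatorname{Id}}[\gamma,q_0]$ row by row, using $q^\gamma j^{-\gamma}|L_{q,j}|$.
\begin{itemize}
	\item The multiples $j=sq$ contribute at most $\sum_{s\geqslant2}s^{-\gamma}|P(s)|+\mathcal{O}(q^{-2}\sum_s s^{2-\gamma})$. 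By Lemma~\ref{lem:P bounds}(ii) this is below $1$ for $\gamma=7/2$, and the error is summable since $\gamma>3$.
	\item The non-multiples contribute $q^{\gamma-2}\sum_{q\nmid j}j^{-\gamma}\cdot j/|\sin(j\pi/q)|\cdot q^{-1}$ plus the remainder $q^{\gamma-4}\sum_j j^{2-\gamma}$.
\end{itemize}
The remainder is the delicate part. Uniformity in $j$ of the $\mathcal{O}(j^2q^{-4})$ term yields a contribution of order $q^{\gamma-4}\sum_{j\geqslant q_0}j^{2-\gamma}\lesssim q^{\gamma-4}q_0^{3-\gamma}$, which is small as $q\geqslant q_0\to\infty$ precisely because $\gamma<4$. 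Near-multiples $j=sq\pm r$ give $1/|\sin(j\pi/q)|\sim q/(\pi r)$, so their total is $\sim q^{\gamma-2}\sum_s(sq)^{1-\gamma}\sum_r 1/r\sim q^{-1}\log q$, which is also small. So $\matrixnorm{L-\operatorname{Id}}[\gamma,q_0]<1$ for $q_0$ large. The column decay required by the invariance criterion holds, since each column entry is $\mathcal{O}_j(q^{-2})$ with $q^{\gamma}$ weights. The last point needs more care: Remark~\ref{rem:estimate of linear functions:special cases}(ii) gives $\mathcal{O}_j(q^{-4})$ for fixed $j$, and $q^\gamma q^{-4}\to0$. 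Hence $L$ is invertible on $h_{\gamma,q_0}$ by the Neumann series.

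\emph{Conclusion and main obstacle.} Since $B$ has finitely many columns, each lying in $h_{\gamma,q_0}$ by the same decay, we get $\mathbf{v}=-L^{-1}B\mathbf{h}$. The linear map $\mathbf{n}\mapsto\mathbf{h}\in\real^{q_0}$ is therefore injective on the kernel, and $\dim\ker\mathcal{J}\leqslant q_0$. I expect the main obstacle to be the uniform off-multiple estimate. It requires turning the asymptotic Lemma~\ref{lem:estimate of linear functions} into bounds uniform in $j$ and in $q\geqslant q_0$, including the $1/\sin(j\pi/q)$ singularities and the $m$-dependent divisibility terms that the cancellation identity does not remove. My first attempt would be to bound these directly with $|\sin x|\geqslant 2\operatorname{dist}(x,\pi\z)/\pi$, and to accept any constant provided the resulting bound decays in $q_0$.
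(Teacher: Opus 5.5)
\textbf{The non-multiple estimate has a genuine gap.} Your architecture is the paper's: anti-symmetry from the circular orbits, the weighted rows $\lambda_{j,q}$, Neumann inversion on $h_{\gamma,q_0}$, and finitely many low columns lying in $h_{\gamma,q_0}$. Your treatment of the diagonal, of the multiples $j=sq$ via $P(s)$, and of column decay via Remark~\ref{rem:estimate of linear functions:special cases}~(ii) is fine. The failure is in the non-multiple estimate, as you set it up. For $q\nmid j$ we also have $mq\nmid j$, so no divisibility terms remain, and the first cancellation leaves
\[
	\lambda_{j,q} = (-1)^{j+1}\frac{2c_3 j}{q^{2}} \sum_{m\in\{2,3,4\}} x_m^{(q)}\bigl(1-m^{-2}\bigr)\frac{\sin\frac{\pi}{2mq}}{\sin\frac{j\pi}{mq}} + \mathcal{O}\bigl(j^{2}q^{-4}\bigr).
\]
So the denominator in your displayed bound should be $\sin\frac{j\pi}{mq}$, not $\sin\frac{j\pi}{q}$.

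For $q_0\leqslant j\leqslant q$ the sine ratio is of size $1/j$. Bounding it directly, as you plan, therefore gives only $|\lambda_{j,q}|\leqslant C c_3 q^{-2}$. This block then contributes about $c_3\, q^{\gamma-2} q_0^{1-\gamma}$ to row $q$, which tends to $+\infty$ as $q\to+\infty$ for every fixed $q_0$. Your own expression already shows this: its single term $j=q_0$ is at least $q^{\gamma-2}q_0^{-\gamma}/\pi$ once $q\geqslant 2q_0$. Your near-multiple count $q^{\gamma-2}\sum_s (sq)^{1-\gamma}\sum_r r^{-1}$ covers only the blocks $s\geqslant 1$. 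In the block $s=0$, i.e.\ $j<q$, the factor $(sq)^{1-\gamma}$ must be replaced by $j^{1-\gamma}$, and this is where the row sums blow up. Hence $\sup_{q\geqslant q_0}$ is infinite, and no choice of $q_0$ saves the Neumann series.

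\textbf{The fix is a second use of the weight identity.} Use $\sum_m x_m^{(q)}(1-m^{-2})=0$ once more. For $j$ small compared with $q$, expanding $\sin u/u$ gives
\[
	\frac{\sin\frac{\pi}{2mq}}{\sin\frac{j\pi}{mq}} = \frac{1}{2j}+\mathcal{O}(j q^{-2}).
\]
The leading part $\frac{1}{2j}$ is independent of $m$, so it can be subtracted inside the bracket at no cost. This yields $\lambda_{j,q}=\mathcal{O}(j^2q^{-4})$ for $j\leqslant J$, a range that contributes only $\mathcal{O}(q^{\gamma-4})$.

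The cut-off $J$ must grow with $q$. The paper takes $J=\lfloor q^{2/3}\rfloor$. Then the direct cosecant bound on the rest of the block $j<mq$ gives
\[
	\mathcal{O}\bigl(q^{\gamma-2}J^{1-\gamma}\bigr)+\mathcal{O}(q^{-1}\log q),
	\qquad q^{\gamma-2}J^{1-\gamma}=q^{(\gamma-4)/3}\to 0 .
\]
The blocks $s\geqslant 1$ are handled as you did. With this repair the rest of your argument goes through.
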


Combined with Lemma~\ref{lem:isospectral deformation in kernel}, this shows that the deformation functions of the dynamically isospectral $C^{1}$ families in $\mathcal{M}^{r}$ with $\Omega_{0} = \Omega_{*}$ span a finite-dimensional space, as stated in Theorem~\ref{thm:intro finite dimensional}.
\begin{proof}
	Let $\mathbf{n} = \mathbf{n}_1 \cup \mathbf{n}_2 \in \ker \mathcal{J}$. Then $\ell^{q, q}(\mathbf{n}) = 0$ for all $q \in \n$, so Lemma~\ref{lem:standard stadium circular orbits} gives $\widehat{\mathbf{n}}_{1, j} = -\widehat{\mathbf{n}}_{2, j}$ for all $j \in \n_0$. Write $\widehat{\mathbf{n}}_{j} \define \widehat{\mathbf{n}}_{1, j}$.

	Take $\gamma=\tfrac{7}{2}$ and regard $\mathbf{n}$ as an element of $X_{*,\gamma}$ (see Remark~\ref{rem:operator well-defined on subspace}).
	Fix an integer $q_0 \in \n$ sufficiently large that the hybrid periodic orbit $\Gamma^{q, mq}$ exists and is regular for each $q \geqslant q_0$ and each $m \in \set{2, \, 3, \, 4}$; such a $q_0$ exists by Lemma~\ref{lem:Taylor expansion of collision angle}, for every $a > 0$, and we may enlarge it further below.
	For each $q \geqslant q_0$ and each $m \in \set{2, \, 3, \, 4}$, the identity $\ell^{q,mq}(\mathbf{n})=0$ together with the termwise expansion of $\ell^{q,mq}$ from Remark~\ref{rem:operator well-defined on subspace} and the relation $\widehat{\mathbf{n}}_{2, j}=-\widehat{\mathbf{n}}_{j}$ gives
	\begin{equation}\label{eq:thm:finite dim:basic equations}
		\sum_{j = 0}^{+\infty} \widehat{\mathbf{n}}_{j} \bigl(\ell_1^{q, mq}(\mathbf{e}_{1, j})-\ell_2^{q, mq}(\mathbf{e}_{2, j}) \bigr)=0.
	\end{equation}

	Taking the linear combination of~\eqref{eq:thm:finite dim:basic equations} for $m \in \set{2, \, 3, \, 4}$ with the weights~\eqref{eq:def:weights x2 x3 x4} gives
	\begin{equation}\label{eq:thm:finite dim:lambda system}
		\sum_{j = 0}^{+\infty} \lambda_{j,q} \, \widehat{\mathbf{n}}_{j}=0 \quad  \text{for each } q \geqslant q_0,
	\end{equation}
	with $\lambda_{j, q}$ as in~\eqref{eq:def:lambda j q}.

	The key estimate is
	\begin{equation}\label{eq:thm:finite dim:matrix norm bound}
		\matrixnorm{K}[\gamma, \, q_0]
		= \sup_{q \geqslant q_0} \sum_{\substack{j \geqslant q_0 \\ j \neq q}} q^{\gamma} j^{-\gamma} \abs{\lambda_{j, q}/\lambda_{q, q}} < 1,
	\end{equation}
	where $K$ is the off-diagonal matrix with entries $K_{q, j} = \lambda_{j, q}/\lambda_{q, q}$ for $j \neq q$ and $K_{q, q} = 0$, formally defined in~\eqref{eq:thm:finite dim:K matrix def} below.
	Once~\eqref{eq:thm:finite dim:matrix norm bound} is established, $\operatorname{Id} + K$ is invertible on $h_{\gamma, q_0}$ by Neumann series (the required invariance $K(h_{\gamma, q_0}) \subseteq h_{\gamma, q_0}$ is verified at the end of the proof), which determines all coefficients $\widehat{\mathbf{n}}_{j}$ with $j \geqslant q_0$ from the finitely many low-frequency ones $\widehat{\mathbf{n}}_{0}, \ldots, \widehat{\mathbf{n}}_{q_0 - 1}$; hence $\ker \mathcal{J}$ is finite-dimensional.

	We begin with the diagonal coefficients $\lambda_{q,q}$.
	Applying Lemma~\ref{lem:estimate of linear functions} with $j=q$, we obtain, for each $m\in\set{2, \, 3, \, 4}$,
	\[
		\ell_1^{q,mq}(\mathbf{e}_{1, q}) = (-1)^{q+1}\frac{\pi}{2}+\mathcal{O}(q^{-2}),
		\qquad
		\ell_2^{q,mq}(\mathbf{e}_{2, q}) = \mathcal{O}(q^{-2}).
	\]
	Thus, after multiplying by $x_m^{(q)}$, summing over $m\in\set{2, \, 3, \, 4}$, and using the first identity in \eqref{eq:weights identities}, we obtain
	\[
		\lambda_{q,q}
		= (-1)^{q+1}\frac{\pi}{2}\sum_{m\in\set{2, \, 3, \, 4}}x_m^{(q)}+\mathcal{O}(q^{-2})
		= 1+\mathcal{O}(q^{-2}).
	\]
	Increasing $q_0$, if necessary, we may therefore assume that
	\begin{equation}\label{eq:thm:finite dim:diag}
		\lambda_{q,q} \geqslant \frac{99}{100}
		\qquad \text{for all } q\geqslant q_0.
	\end{equation}

	Next, we estimate the off-diagonal coefficients $\lambda_{j,q}$ with $j \neq q$.
	For all $q\geqslant q_0$ we split the indices $j \geqslant q_0$ in \eqref{eq:thm:finite dim:lambda system} as
	\[
		j=sq \ \text{for some } s\geqslant 2
		\qquad\text{or}\qquad
		q\nmid j.
	\]

	\emph{Case~(a).} $q \mid j$.
	Assume $j = sq$ with $s \geqslant 2$. We claim that
	\begin{equation}\label{eq:thm:finite dim:multiples sum}
		\sup_{q\geqslant q_0}\sum_{s=2}^{+\infty}s^{-\gamma}\abs[\Big]{\frac{\lambda_{sq,q}}{\lambda_{q,q}}}
		\leqslant 0.64.
	\end{equation}
	By Remark~\ref{rem:estimate of linear functions:special cases}~(i), for each $m\in\set{2, \, 3, \, 4}$ we may write
	\[
		\ell_1^{q,mq}(\mathbf{e}_{1,sq})
		=
		(-1)^{sq+s}\frac{\pi}{2}+E_{1,m}(s,q),
	\]
	and
	\[
		\ell_2^{q,mq}(\mathbf{e}_{2,sq})
		=
		\mathbf{1}_{m\mid s}(-1)^{sq+\frac{s}{m}}\frac{\pi}{2}+E_{2,m}(s,q),
	\]
	where $\abs{E_{1,m}(s,q)}+\abs{E_{2,m}(s,q)}\leqslant C s^2 q^{-2}$ for some constant $C$ independent of $s$, $q$, and $m \in \set{2, \, 3, \, 4}$.
	Therefore
	\begin{equation}\label{eq:thm:finite dim:multiples decomposition}
		\lambda_{sq,q}
		=
		(-1)^{sq+q+1} P(s)
		+R_{s,q},
	\end{equation}
	with $P$ as in~\eqref{eq:def:P} and $\abs{R_{s,q}}\leqslant C s^2 q^{-2}$ for all $s\geqslant 2$ and $q\geqslant q_0$.
	Since $\lambda_{q,q}\geqslant \frac{99}{100}$ by~\eqref{eq:thm:finite dim:diag}, it suffices to bound $\sum_{s\geqslant 2} s^{-\gamma}\abs{\lambda_{sq,q}}$.
	By \eqref{eq:thm:finite dim:multiples decomposition},
	\begin{equation}\label{eq:thm:finite dim:remainder estimate}
		\sum_{s=2}^{+\infty}s^{-\gamma}\abs{R_{s,q}}
		\leqslant
		C q^{-2}\sum_{s=2}^{+\infty}s^{2-\gamma}
		=
		\mathcal{O}\parentheses[\big]{q^{-2}},
	\end{equation}
	since $\gamma=\tfrac{7}{2}>3$.

	By Lemma~\ref{lem:P bounds}~(ii) and the remainder estimate~\eqref{eq:thm:finite dim:remainder estimate},
	\[
		\sum_{s=2}^{+\infty}s^{-\gamma}\abs{\lambda_{sq,q}}
		\leqslant 0.6252 + \mathcal{O}\parentheses[\big]{q^{-2}}
		\qquad (q\geqslant q_0),
	\]
	so that, enlarging $q_0$ to absorb the remainder,
	\[
		\sum_{s\geqslant 2} s^{-\gamma}\abs{\lambda_{sq,q}} < 0.63
		\quad \text{for all } q\geqslant q_0.
	\]
	Combined with $\lambda_{q,q}\geqslant \frac{99}{100}$ from \eqref{eq:thm:finite dim:diag} and $0.63/0.99 < 0.64$, this proves the claim.

	\smallskip

	\emph{Case~(b).} $q \nmid j$.
	We claim that
	\begin{equation}\label{eq:thm:finite dim:nonmultiples sum}
		\sup_{q\geqslant q_0}\sum_{\substack{j\geqslant q_0\\ q\nmid j}} q^\gamma j^{-\gamma}\abs[\Big]{\frac{\lambda_{j,q}}{\lambda_{q,q}}}
		\leqslant 0.01.
	\end{equation}
	Set $J\define \lfloor q^{2/3}\rfloor$.
	We bound the sum separately on the three ranges $j>q^2$, $q_0\leqslant j\leqslant J$, and $J<j\leqslant q^2$. It suffices to show that each contribution is $\mathcal{O}(q^{-c})$ for some $c>0$. Once this is done, enlarging $q_0$ gives \eqref{eq:thm:finite dim:nonmultiples sum}.

	First suppose $j>q^2$. By \eqref{eq:expression of linear functional}, we have $\abs{ \ell_i^{q,mq}(\mathbf{e}_{i, j})}\leqslant p_i\sin\varphi_i^{q,mq}=\mathcal{O}(1)$. The weights in \eqref{eq:def:weights x2 x3 x4} are bounded. Thus $\abs{\lambda_{j,q}}=\mathcal{O}(1)$. Therefore
	\begin{equation}\label{eq:thm:finite dim:nonmultiples large j contribution}
		\sum_{j>q^2} q^\gamma j^{-\gamma}\abs[\Big]{\frac{\lambda_{j,q}}{\lambda_{q,q}}}
		=\mathcal{O}\parentheses[\big]{q^{2-\gamma}}.
	\end{equation}
	On the range $q_0\leqslant j\leqslant q^2$ with $q\nmid j$, Lemma~\ref{lem:estimate of linear functions} and the second identity in \eqref{eq:weights identities} give
	\[
		\lambda_{j,q}
		=
		\sum_{m \in \set{2, \, 3, \, 4}} x_m^{(q)}\frac{a_2^{m,j,q}-b_2^{m,j,q}}{q^2}
		+\mathcal{O}\parentheses[\big]{j^2q^{-4}},
	\]
	where $a_2^{m,j,q}$ and $b_2^{m,j,q}$ are defined in \eqref{eq:lem:estimate of linear functions:coefficients left:q not mid j} and \eqref{eq:lem:estimate of linear functions:coefficients right:mq not mid j} respectively. Substituting these explicit expressions gives
	\[
		\lambda_{j,q}
		=
		(-1)^{j+1}\frac{2c_3 j}{q^2}
		\sum_{m\in\set{2, \, 3, \, 4}}x_m^{(q)}\parentheses[\big]{1-m^{-2}}
		\parentheses[\Bigg]{
			\frac{\sin\frac{\pi}{2q}}{\sin\frac{j\pi}{q}}
			+\frac{\sin\frac{\pi}{2mq}}{\sin\frac{j\pi}{mq}}
		}
		+\mathcal{O}\parentheses[\big]{j^2q^{-4}}.
	\]
	Using the identity $\sum_{m \in \set{2, \, 3, \, 4}} x_m^{(q)}(1 - m^{-2}) = 0$, we obtain
	\begin{equation}\label{eq:thm:finite dim:nonmultiples lambda intermediate}
		\lambda_{j,q}
		=
		(-1)^{j+1}\frac{2c_3 j}{q^2}
		\sum_{m\in\set{2, \, 3, \, 4}}x_m^{(q)}\parentheses[\big]{1-m^{-2}}
		\frac{\sin\frac{\pi}{2mq}}{\sin\frac{j\pi}{mq}}
		+\mathcal{O}\parentheses[\big]{j^2q^{-4}}.
	\end{equation}
	Subtracting $\frac{1}{2j}$ inside the bracket (allowed by the same identity), we obtain
	\begin{equation}\label{eq:thm:finite dim:nonmultiples lambda cancellation}
		\lambda_{j,q}
		=
		(-1)^{j+1} \frac{2c_3 j}{q^2}
		\sum_{m\in\set{2, \, 3, \, 4}}x_m^{(q)}\parentheses[\big]{1-m^{-2}}
		\parentheses[\Bigg]{
			\frac{\sin\frac{\pi}{2mq}}{\sin\frac{j\pi}{mq}}-\frac{1}{2j}
		}
		+\mathcal{O}\parentheses[\big]{j^2q^{-4}}.
	\end{equation}

	\smallskip

	\emph{Claim.} There exists $C>0$ such that for all sufficiently large $q$, all $m\in\set{2, \, 3, \, 4}$ and all integers $1\leqslant j\leqslant J = \lfloor q^{\frac{2}{3}}\rfloor$,
	\begin{equation}\label{eq:thm:finite dim:nonmultiples claim}
		\abs[\Bigg]{
			\frac{\sin\frac{\pi}{2mq}}{\sin\frac{j\pi}{mq}}-\frac{1}{2j}
		}
		\leqslant C\frac{j}{q^2}.
	\end{equation}
	\emph{Proof of the claim.} Set $\varpi\define \frac{\pi}{mq}$ and $\operatorname{sinc} u \define \frac{\sin u}{u}$. Then
	\[
		\frac{\sin\frac{\varpi}{2}}{\sin(j\varpi)}
		=
		\frac{\varpi/2}{j\varpi}\cdot \frac{\operatorname{sinc}\parentheses{\varpi/2}}{\operatorname{sinc}(j\varpi)}
		=
		\frac{1}{2j}\cdot \frac{\operatorname{sinc}\parentheses{\varpi/2}}{\operatorname{sinc}(j\varpi)}.
	\]
	For $1\leqslant j\leqslant J$, the bound $\abs{j\varpi}\leqslant \frac{\pi}{m}q^{-1/3}$ gives $\abs{j\varpi}\leqslant 1$ for all sufficiently large $q$. The function $\operatorname{sinc}$ is smooth, takes the value $1$ at $0$ and has vanishing derivative there, so on $\set{u : \abs{u} \leqslant 1}$ we have $\operatorname{sinc} u=1+\mathcal{O}(u^2)$, with $\operatorname{sinc}$ bounded away from $0$. It follows that
	\[
		\frac{\operatorname{sinc}\parentheses{\varpi/2}}{\operatorname{sinc}(j\varpi)}-1=\mathcal{O}\parentheses[\big]{j^2\varpi^2},
	\]
	and hence
	\[
		\frac{\sin\frac{\varpi}{2}}{\sin(j\varpi)}-\frac{1}{2j}
		=
		\frac{1}{2j}\parentheses[\bigg]{\frac{\operatorname{sinc}\parentheses{\varpi/2}}{\operatorname{sinc}(j\varpi)}-1}
		=
		\mathcal{O}\parentheses[\big]{j\varpi^2}
		=
		\mathcal{O}\parentheses[\Big]{\frac{j}{q^2}}.
	\]
	This proves the claim.

	\smallskip

	The factor $2c_3 j / q^2$ in \eqref{eq:thm:finite dim:nonmultiples lambda cancellation} is $\mathcal{O}(j/q^2)$. By \eqref{eq:thm:finite dim:nonmultiples claim}, the parenthesized difference in the summand has the same bound for $q_0\leqslant j\leqslant J$. The weights are bounded, and the error term in \eqref{eq:thm:finite dim:nonmultiples lambda cancellation} is already of order $\mathcal{O}(j^2 q^{-4})$. Noting that $\lambda_{j,q} = \mathcal{O}(j^2 q^{-4})$ in this range and that $\sum_{j \geqslant q_0} j^{2-\gamma}$ converges (since $\gamma > 3$), we obtain
	\begin{equation}\label{eq:thm:finite dim:nonmultiples small j contribution}
		\sum_{q_0\leqslant j\leqslant J} q^\gamma j^{-\gamma}\abs[\Big]{\frac{\lambda_{j,q}}{\lambda_{q,q}}}
		=
		\mathcal{O}\parentheses[\big]{q^{\gamma-4}}.
	\end{equation}

	Next, we consider the range $J < j \leqslant q^2$ with $q \nmid j$. Since $\abs{\sin\frac{\pi}{2mq}} \leqslant \frac{\pi}{2mq} = \mathcal{O}\parentheses[\big]{q^{-1}}$, it follows from~\eqref{eq:thm:finite dim:nonmultiples lambda intermediate} that
	\[
		\lambda_{j,q}
		=
		\mathcal{O}\parentheses[\Big]{\frac{j}{q^3}}\sum_{m\in\set{2, \, 3, \, 4}}\frac{1}{\abs{\sin\frac{j\pi}{mq}}}
		+\mathcal{O}\parentheses[\big]{j^2q^{-4}}.
	\]
	Consequently, there exists $C>0$ (independent of $q$) such that
	\[
		\sum_{\substack{J< j\leqslant q^2\\ q\nmid j}} q^\gamma j^{-\gamma}\abs[\Big]{\frac{\lambda_{j,q}}{\lambda_{q,q}}}
		\leqslant C q^{\gamma-3}\sum_{m\in\set{2, \, 3, \, 4}}\sum_{\substack{J<j\leqslant q^2\\ q\nmid j}}j^{1-\gamma}\frac{1}{\abs{\sin\frac{j\pi}{mq}}}
		+C q^{\gamma-4}\sum_{\substack{J<j\leqslant q^2\\ q\nmid j}}j^{2-\gamma}.
	\]
	Since $\gamma>3$ and $J=\lfloor q^{2/3}\rfloor$, the second term is $\mathcal{O}\parentheses[\big]{q^{\gamma-4}J^{3-\gamma}}=\mathcal{O}\parentheses[\big]{q^{\gamma/3-2}}$.

	It remains to bound the first term. 
	Writing $j = k(mq) + r$ with $k \geqslant 0$ and $1 \leqslant r \leqslant mq - 1$, we have $\abs{\sin\frac{j\pi}{mq}} = \abs{\sin\frac{r\pi}{mq}}$. The bound $\sin\theta \geqslant \frac{2}{\pi}\min\set{\theta, \, \pi - \theta}$ on $[0, \pi]$ gives the pointwise estimate
	\[
		\frac{1}{\abs{\sin\frac{r\pi}{mq}}} \leqslant \frac{mq}{2 \min\set{r, \, mq - r}}.
	\]
	Summing on $r \in [1, \lfloor mq/2 \rfloor]$ and doubling by symmetry gives
	\begin{equation}\label{eq:sin sum}
		\sum_{r = 1}^{mq - 1} \frac{1}{\abs{\sin\frac{r\pi}{mq}}} = \mathcal{O}\parentheses[\big]{q \log q}.
	\end{equation}

	For $k \geqslant 1$, we have $j \geqslant kmq$, so $j^{1-\gamma} \leqslant (kmq)^{1-\gamma}$. Combined with~\eqref{eq:sin sum}, we obtain
	\[
		\sum_{r = 1}^{mq - 1} \frac{(kmq + r)^{1-\gamma}}{\abs{\sin\frac{r\pi}{mq}}}
		\leqslant
		(kmq)^{1-\gamma} \sum_{r = 1}^{mq - 1} \frac{1}{\abs{\sin\frac{r\pi}{mq}}}
		=
		\mathcal{O}\parentheses[\big]{k^{1-\gamma}\, q^{2-\gamma} \log q}.
	\]
	Since $\gamma > 3$, the series $\sum_{k \geqslant 1} k^{1-\gamma}$ converges, and summing over $k$ gives $\mathcal{O}(q^{2-\gamma} \log q)$.

	For $k = 0$, we have $j = r$ with $J < r \leqslant mq - 1$, and we split this range at $r = \lfloor mq/2 \rfloor$.
	For $J < r \leqslant \lfloor mq/2 \rfloor$, we have $\min\set{r, \, mq - r} = r$, so the bound becomes $1/\abs{\sin\frac{r\pi}{mq}} \leqslant mq/(2r)$. It follows that
	\[
		\sum_{J < r \leqslant \lfloor mq/2 \rfloor} \frac{r^{1-\gamma}}{\abs{\sin\frac{r\pi}{mq}}}
		\leqslant
		\frac{mq}{2}\sum_{r > J} r^{-\gamma}
		=
		\mathcal{O}\parentheses[\big]{qJ^{1-\gamma}}.
	\]
	For $\lfloor mq/2 \rfloor < r \leqslant mq - 1$, we have $\min\set{r, \, mq - r} = mq - r$ and $r^{1-\gamma} \leqslant (mq/2)^{1-\gamma}$. Setting $r' = mq - r$ gives
	\[
		\sum_{\lfloor mq/2 \rfloor < r \leqslant mq - 1} \frac{r^{1-\gamma}}{\abs{\sin\frac{r\pi}{mq}}}
		\leqslant
		(mq/2)^{1-\gamma} \cdot \frac{mq}{2} \sum_{r' = 1}^{\lfloor mq/2 \rfloor} \frac{1}{r'}
		=
		\mathcal{O}\parentheses[\big]{q^{2-\gamma}\log q}.
	\]

	Combining these estimates gives
	\[
		\sum_{\substack{J < j \leqslant q^2 \\ q \nmid j}} j^{1-\gamma} \cdot \frac{1}{\abs{\sin\frac{j\pi}{mq}}}
		= \mathcal{O}\parentheses[\big]{q^{2-\gamma}\log q}
		+ \mathcal{O}\parentheses[\big]{qJ^{1-\gamma}}.
	\]
	Since $J=\lfloor q^{2/3}\rfloor$, we have $q^{\gamma-3}\cdot qJ^{1-\gamma}=\mathcal{O}\parentheses[\big]{q^{\gamma/3-4/3}}$.
	Multiplying by the prefactor $q^{\gamma-3}$ in the first term above and adding the second term gives
	\begin{equation}\label{eq:thm:finite dim:nonmultiples middle j contribution}
	\begin{aligned}
		\sum_{\substack{J<j\leqslant q^2\\ q\nmid j}} q^\gamma j^{-\gamma}\abs[\Big]{\frac{\lambda_{j,q}}{\lambda_{q,q}}}
		&=
		\mathcal{O}\parentheses[\big]{q^{-1}\log q}
		+\mathcal{O}\parentheses[\big]{q^{\gamma/3-4/3}}
		+\mathcal{O}\parentheses[\big]{q^{\gamma/3-2}}.
	\end{aligned}
	\end{equation}
	With $\gamma=\tfrac{7}{2}$, the estimates \eqref{eq:thm:finite dim:nonmultiples small j contribution}, \eqref{eq:thm:finite dim:nonmultiples middle j contribution}, and \eqref{eq:thm:finite dim:nonmultiples large j contribution} give contributions $\mathcal{O}(q^{-1/2})$, $\mathcal{O}(q^{-1/6})$, and $\mathcal{O}(q^{-3/2})$ from the ranges $q_0\leqslant j\leqslant J$, $J<j\leqslant q^2$ with $q\nmid j$, and $j>q^2$, respectively.
	Summing these three bounds gives
	\[
		\sum_{\substack{j\geqslant q_0\\ q\nmid j}} q^\gamma j^{-\gamma}\abs[\Big]{\frac{\lambda_{j,q}}{\lambda_{q,q}}}
		=
		\mathcal{O}\parentheses[\big]{q^{-1/6}}.
	\]
	For $q_0$ sufficiently large, this proves \eqref{eq:thm:finite dim:nonmultiples sum}.

	\smallskip

	By \eqref{eq:thm:finite dim:multiples sum} and \eqref{eq:thm:finite dim:nonmultiples sum}, for $q_0$ sufficiently large the off-diagonal matrix $K = \parentheses{K_{q, j}}_{q, j \geqslant q_0}$ defined by
	\begin{equation}\label{eq:thm:finite dim:K matrix def}
		K_{q, j} \define
		\begin{cases}
			\lambda_{j, q}/\lambda_{q, q} & \quad \text{if } j \neq q; \\
			0 & \quad \text{if } j = q
		\end{cases}
	\end{equation}
	satisfies $\matrixnorm{K}[\gamma, q_0] \leqslant 0.65 < 1$.

	Write the sequence $\mathbf{w} \define \parentheses{\widehat{\mathbf{n}}_{j}}_{j\geqslant 0}$ as $\mathbf{w}=\mathbf{w}^{\,\mathrm{low}}\oplus \mathbf{w}^{\,\mathrm{high}}$, where
	\[
		\mathbf{w}^{\,\mathrm{low}} \define \parentheses{ \widehat{\mathbf{n}}_{0},\widehat{\mathbf{n}}_{1},\dots,\widehat{\mathbf{n}}_{q_0-1} }^{T}
		\quad \text{ and } \quad
		\mathbf{w}^{\,\mathrm{high}} \define \parentheses{ \widehat{\mathbf{n}}_{q_0},\widehat{\mathbf{n}}_{q_0+1},\dots }^{T}.
	\]
	Dividing~\eqref{eq:thm:finite dim:lambda system} by $\lambda_{q, q}$ and applying the decomposition $\mathbf{w} = \mathbf{w}^{\,\mathrm{low}} \oplus \mathbf{w}^{\,\mathrm{high}}$ gives, for each $q \geqslant q_0$,
	\[
		\mathbf{w}^{\,\mathrm{high}} + K\,\mathbf{w}^{\,\mathrm{high}}
		= -K_{\mathrm{low}}\,\mathbf{w}^{\,\mathrm{low}},
	\]
	where $K_{\mathrm{low}}$ is the matrix indexed by $q\geqslant q_0$ and $0\leqslant j\leqslant q_0-1$ with entries $K_{qj}=\frac{\lambda_{j,q}}{\lambda_{q,q}}$.

	We show $K_{\mathrm{low}}\,\mathbf{w}^{\,\mathrm{low}} \in h_{\gamma,q_0}$. For each fixed $1 \leqslant j < q_0$, Remark~\ref{rem:estimate of linear functions:special cases}~(ii) gives
	\[
		\lambda_{j,q}=\mathcal{O}_j\parentheses[\big]{q^{-4}}
		\qquad \text{as } q\to+\infty.
	\]
	For $j = 0$, Corollary~\ref{cor:zeroth mode difference} gives $\ell_1^{q,mq}(\mathbf{e}_{1,0}) - \ell_2^{q,mq}(\mathbf{e}_{2,0}) = -\beta(1-m^{-2})\,q^{-2} + \mathcal{O}(q^{-4})$ with $\beta$ independent of $m$. Summing with weights $x_m^{(q)}$ and using $\sum_{m} x_m^{(q)}(1-m^{-2}) = 0$, the $q^{-2}$ term cancels and $\lambda_{0,q} = \mathcal{O}(q^{-4})$.
	The bound $\abs{\lambda_{q,q}} \geqslant 99/100$ for $q \geqslant q_0$ and the finiteness of the index set $\set{0, \dots, q_0-1}$ imply
	\[
		(K_{\mathrm{low}}\,\mathbf{w}^{\,\mathrm{low}})_q
		=
		\mathcal{O}\parentheses[\big]{q^{-4}}
		\qquad \text{as } q\to+\infty.
	\]
	Since $\gamma < 4$, we have $q^{\gamma}\, \abs{(K_{\mathrm{low}}\,\mathbf{w}^{\,\mathrm{low}})_q} = \mathcal{O}\bigl(q^{\gamma - 4}\bigr) = o(1)$ as $q \to +\infty$, hence $K_{\mathrm{low}}\,\mathbf{w}^{\,\mathrm{low}} \in h_{\gamma, q_0}$.

	Moreover, the matrix $K$ preserves $h_{\gamma, q_0}$.
	Indeed, for each fixed $j \geqslant q_0$, every $q > j$ satisfies $q \nmid j$, so the non-multiple estimate $\sum_{j \geqslant q_0,\, q \nmid j} q^{\gamma} j^{-\gamma} \abs{K_{q, j}} = \mathcal{O}(q^{-1/6})$ gives $q^{\gamma} \abs{K_{q, j}} \leqslant j^{\gamma}\, \mathcal{O}\bigl(q^{-1/6}\bigr) \to 0$, and the invariance criterion of Subsection~\ref{sub:linearized isospectral operator J} applies.
	The bound $\matrixnorm{K}[\gamma,q_0]<1$ therefore implies that $\operatorname{Id} + K$ is invertible on $h_{\gamma,q_0}$ by Neumann series.
	Therefore,
	\[
		\mathbf{w}^{\,\mathrm{high}}
		=-(\operatorname{Id}+K)^{-1}K_{\mathrm{low}}\,\mathbf{w}^{\,\mathrm{low}}.
	\]
	The full sequence $\set{\widehat{\mathbf{n}}_{j}}_{j\geqslant 0}$ is therefore uniquely determined by the finitely many low-frequency coefficients $\parentheses{ \widehat{\mathbf{n}}_{0},\widehat{\mathbf{n}}_{1},\dots,\widehat{\mathbf{n}}_{q_0-1} }$. Together with the relation $\widehat{\mathbf{n}}_{2, j}=-\widehat{\mathbf{n}}_{j}$ established at the beginning of the proof, this shows that the low-frequency projection is injective on $\ker\mathcal{J}$.
	Hence, $\ker\mathcal{J}$ is finite-dimensional and $\dim\ker\mathcal{J}\leqslant q_0$.
	The threshold $q_0$ depends on $a$: it must exceed the existence thresholds $q_{\ast}(m, a)$ of Lemma~\ref{lem:Taylor expansion of collision angle} and absorb remainders whose implicit constants involve $c_3 = \pi^{2}/(8a)$.
	The resulting bound on the dimension is therefore not uniform in $a$, although the finite-dimensionality itself holds for every $a > 0$.
\end{proof}

\begin{remark}\label{rem:choice of weights}
	We explain here why three orbit families ($m \in \set{2, \, 3, \, 4}$) are needed and how the symmetric weights~\eqref{eq:def:weights x2 x3 x4} are selected.
	If we used only two orbit families, say $m \in \set{2, \, 3}$, the two linear constraints in~\eqref{eq:weights identities} would uniquely determine the weights as $\parentheses[\big]{ x_2^{(q)}, \, x_3^{(q)} } = (-1)^{q+1}\frac{2}{\pi} \parentheses[\big]{ \frac{32}{5}, \, -\frac{27}{5} }$.
	With these weights, the four case bounds of Lemma~\ref{lem:P bounds} would become $\frac{32}{5}$, $\frac{64}{5}$, $\frac{27}{5}$, $\frac{27}{5}$ instead of $1$, $\frac{155}{27}$, $\frac{256}{27}$, $1$, and their sum would exceed $1.38$.
	The norm bound $\matrixnorm{K}[\gamma, q_0] < 1$ would then fail, preventing the inversion of $\operatorname{Id} + K$ via Neumann series.

	Introducing the third orbit family ($m = 4$) provides a one-parameter family of admissible weights.
	The dominant contribution to the bound of Lemma~\ref{lem:P bounds}~(ii) comes from the even values of $s$: with the weights~\eqref{eq:def:weights x2 x3 x4}, they contribute $0.522$ for $s \equiv 2 \pmod 4$, $0.077$ for $s \equiv 4 \pmod 8$, and $0.001$ for $8 \mid s$.
	The symmetric choice $x_2^{(q)} = -x_4^{(q)}$ controls the odd values of $s$: for $3 \nmid s$ the first identity in~\eqref{eq:weights identities} gives $P(s) = -1$ whatever the weights, while for $3 \mid s$ the symmetry forces $P(s) = 0$, so that $\abs{P(s)} \leqslant 1$ for every odd $s$ and the odd values contribute at most $0.028$.
	The total is $0.6252 < 1$, and the operator $\operatorname{Id} + K$ is invertible on $h_{\gamma, q_0}$.
\end{remark}

%% file: section/Injectivity.tex
\section{Triviality of the kernel}
\label{sec:triviality of the kernel}

In this section we prove that the kernel of the linearized isospectral operator $\mathcal{J}$ on $C^r_*(S)$ is trivial for stadia with sufficiently long flat edges.
The hypothesis that $a$ is large is essential to our method, as the perturbation parameter $c_3 = \pi^{2}/(8a)$ diverges as $a \to 0$.
In addition, the near-circle rigidity results of~\cite{MR3665005} do not apply to stadia with small $a$, since the jump discontinuity of the curvature at the four junctions persists for every $a > 0$.

\begin{theorem}    \label{thm:standard stadium rigidity}
	There exists $a_0 > 0$ such that for all $a \geqslant a_0$, the kernel of the linearized isospectral operator $\mathcal{J}$ on $C^r_*(S)$ is trivial.
\end{theorem}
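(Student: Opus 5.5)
The plan is to treat the system $\mathcal{J}(\mathbf{n}) = 0$ for large $a$ as a perturbation, in the small parameter $c_3 = \pi^2/(8a)$, of an explicit limiting system, and to show that the limiting system is injective. In this limit every collision angle equals exactly $\pi/(2p_i)$.

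\emph{Setup.} Let $\mathbf{n} \in \ker \mathcal{J}$. By Lemma~\ref{lem:standard stadium circular orbits}, $\widehat{\mathbf{n}}_{2,j} = -\widehat{\mathbf{n}}_{1,j} =: -\widehat{\mathbf{n}}_j$, and $\mathbf{n} \in X_{*,\gamma}$ with $\gamma = \tfrac72$. Assume $a \geqslant 12$. Then Lemmas~\ref{lem:short orbit control} and~\ref{lem:tail orbit asymptotics} give, for \emph{every} $q \geqslant 1$ and $m \in \set{2,3,4}$, a regular orbit $\Gamma^{q,mq}$. Its angle corrections satisfy $\abs{\varepsilon_1}+\abs{\varepsilon_2} \leqslant C_0 c_3 q^{-3}$ with $C_0$ absolute, and by Lemma~\ref{lem:short orbit control} the same bound with $q=1$ holds for $m \in \set{2,3}$. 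Every $q \geqslant 1$ thus yields the equations $\sum_{j\geqslant0}\Diffl{j}{q}{m}\,\widehat{\mathbf{n}}_j = 0$.

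\emph{Limiting operator.} I split $\Diffl{j}{q}{m} = \Delta^{(0)}_{j,q,m} + \Delta^{(1)}_{j,q,m}$. The first term is evaluated at the exact angles $\pi/(2q)$ and $\pi/(2mq)$. By~\eqref{eq:expression of linear functional} it vanishes unless $q \mid j$, and for $j = sq$ it equals
\[
(-1)^{sq+s} q\sin\tfrac{\pi}{2q} - \mathbf{1}_{m\mid s}(-1)^{sq+s/m}\, mq \sin\tfrac{\pi}{2mq}.
\]
Combining over $m$ gives a limiting matrix $T_0$ indexed by rows $q \geqslant 1$ and columns $j \geqslant 0$. For $j \geqslant 1$ it is supported on $j \in q\n$. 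The column $j=0$ has entries of size $O(q^{-2})$, and the $q^{-2}$ term can be cancelled by the weight identities as in Corollary~\ref{cor:zeroth mode difference}.

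For large $q$, the rows of $T_0$ (after the weights~\eqref{eq:def:weights x2 x3 x4}) reduce to $\pm P(s) + O(s^2q^{-2})$. Here the error comes only from $q\sin(\pi/2q) - \pi/2$, and Lemma~\ref{lem:P bounds} gives $\gamma$-norm $\leqslant 0.6252 + o(1)$ off the diagonal, uniformly in $a$. For small $q$ the factors $q\sin(\pi/(2q))$ differ from $\pi/2$. There I would choose $q$-dependent weights $x_m^{(q)}$ that normalize the diagonal to $1$ and kill the $j = 0$ column. On those finitely many rows I would then verify directly the divisibility structure: each row couples mode $q$ only to modes $sq$ with $s \geqslant 2$, plus the mode $0$. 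The upper-triangular structure in the divisibility order then makes $T_0$ injective. I would determine the high modes by the Neumann series, then descend through $q = q_1 - 1, \dots, 1$, and finally fix $\widehat{\mathbf{n}}_0$ from one remaining row, using e.g.\ Condition~(H2) and the $\Gamma^{1,m}$ equations.

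\emph{Perturbation.} The remainder $\Delta^{(1)}$ is controlled by $\Phi^{j,p}(\varepsilon) - \Phi^{j,p}(0)$ with $\abs{\varepsilon} \leqslant C_0c_3 q^{-3}$. I would redo the estimates of Lemma~\ref{lem:Taylor expansion of auxiliary variable} and of Case~(b) in the proof of Theorem~\ref{thm:standard stadium finite-dimensional deformations}, but now \emph{uniformly in $q \geqslant 1$}, keeping the explicit factor $c_3$. The goal is a bound $\matrixnorm{\Delta^{(1)}}[\gamma] \leqslant C c_3$ with $C$ absolute. This includes the $\log q$ sums over $1/\abs{\sin(j\pi/(mq))}$ and a crude $O(1)$ bound for $j > q^2$, where the gain is $c_3$ times a decaying power of $q$ via the mean value theorem in $\varepsilon$. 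Then $T_a = T_0 + O(c_3)$ in operator norm on the weighted space $\real \oplus h_{\gamma,1}$. Since $T_0$ has a bounded left inverse, $T_a$ is injective once $c_3$ is small, which gives $a_0$.

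\emph{Main obstacle.} The hard step is this uniform-in-$q$ perturbation bound, especially for small $q$ and for $j$ near multiples of $mq$. There the Taylor expansion in Lemma~\ref{lem:Taylor expansion of auxiliary variable} is no longer valid, and I would need the exact mean value form $2j\,\varepsilon\, f'_p(\xi)$ together with $\abs{f'_p} \lesssim p^2$. A secondary difficulty is exhibiting injectivity of $T_0$ on the finitely many low rows together with the zero mode, where no asymptotics are available and the computation must be done by hand.
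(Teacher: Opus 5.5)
Your overall architecture matches the paper's. You freeze the angles at $\pi/(2p_i)$, invert the high modes by a Neumann series resting on $P(s)$ and Lemma~\ref{lem:P bounds}, and absorb the rest as an $\mathcal{O}(c_3)$ perturbation, with the uniform-in-$q$ estimates done essentially as you describe. Solving the finitely many rows $q<q_1$ by descent in the divisibility order is a legitimate substitute for the paper's explicit bound $0.7323$ down to $q_0=2$.

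\textbf{The gap.} The gap is the final step, ``fix $\widehat{\mathbf{n}}_0$ from one remaining row''. On the rows $q\geqslant q_1$ you keep the weights~\eqref{eq:def:weights x2 x3 x4}. There the frozen $j=0$ entry $\lambda^{\infty}_{0,q}$ is of size $q^{-4}$ but not zero; its leading term is $(-1)^{q+1}\frac{115\pi^4}{311040}q^{-4}$. So the Neumann series does not give $\mathbf{w}^{\mathrm{high}}=0$, but rather $\mathbf{w}^{\mathrm{high}}=-(\mathrm{Id}+K^{\infty})^{-1}(K^{\infty}_{\mathrm{low}})_{\cdot,0}\,\widehat{\mathbf{n}}_0$. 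The descent then makes every low mode a multiple of $\widehat{\mathbf{n}}_0$, and the last row reads $(\alpha+e)\,\widehat{\mathbf{n}}_0=0$, where $e$ is a tail correction you never estimate. Your plan is also inconsistent at this point: you say you kill the $j=0$ column on the small rows, and in the next sentence that each row still couples to mode $0$.

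This step is not a formality. It is exactly the content of Lemma~\ref{lem:admissible threshold}, where the paper needs $\lVert E^{\infty}\rVert_{\mathrm{op}}\leqslant 0.046$ against $\sigma_{\min}(\widetilde{A}^{\flat})\geqslant 0.055$. The margin is thin because the head matrix is nearly singular ($\det\widetilde{A}^{\flat}=\frac32-\sqrt2\approx 0.086$) and amplifies tail contributions. Condition~(H2) cannot supply the missing equation: it is the $j=0$ case of $\widehat{\mathbf{n}}_{1,j}=-\widehat{\mathbf{n}}_{2,j}$ and has already been used.

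\textbf{A repair within your framework.} Kill the frozen $j=0$ column in every row, not only the small ones.
\begin{enumerate}
\item For $q\geqslant2$, impose two linear conditions on $(x_2,x_3,x_4)$: frozen diagonal equal to $1$, and frozen $j=0$ entry equal to $0$. Since the frozen $j=0$ entry is $-\frac{\pi^3}{48}(1-m^{-2})q^{-2}+\mathcal{O}(q^{-4})$, these conditions have solutions within $\mathcal{O}(q^{-2})$ of~\eqref{eq:def:weights x2 x3 x4}.
\item Consequently $\sum_m x_m(1-m^{-2})=\mathcal{O}(q^{-2})$ instead of $0$. Every place where the paper uses the exact identity acquires only residuals of order $c_3(1+j)q^{-4}$, which the existing error terms already absorb. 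These places are the columns $j\in\{0,1\}$, the non-multiple cancellation, and the inserted $-\frac{1}{2j}$. The $P(s)$ bound for large $q$ moves only by $\mathcal{O}(q^{-2})$.
\item At $q=1$, the combination of $\Gamma^{1,2}$ and $\Gamma^{1,3}$ that kills $j=0$ exists because $\det\widetilde{A}^{\flat}\neq0$.
\item The frozen system then decouples. Rows $q\geqslant2$ force $\widehat{\mathbf{n}}_j=0$ for $j\geqslant2$. The $q=1$ row forces $\widehat{\mathbf{n}}_1=0$. The $\Gamma^{1,2}$ row gives $(1-\sqrt2)\,\widehat{\mathbf{n}}_0=0$.
\end{enumerate}
The frozen operator is then block triangular with a bounded inverse, and $E^{\infty}$ vanishes identically. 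Your $\mathcal{O}(c_3)$ perturbation step finishes the proof, and the numerical Lemma~\ref{lem:admissible threshold} becomes unnecessary.
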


The proof compares the linearized system with its limit as $a \to +\infty$, where it becomes explicit.
Below we introduce the objects of this comparison and record their basic properties; the supporting lemmas bound the infinite tail of the system (Lemma~\ref{lem:tail invertibility large a}) and the tail correction to its finite part (Lemma~\ref{lem:admissible threshold}).

\smallskip

Throughout this section, let $q_0 \define 2$, $\gamma = \tfrac{7}{2}$, and $c_3 = \pi^2/(8a)$.
All implicit constants in $\mathcal{O}(\cdot)$ are absolute; in particular, they are independent of $q$, $j$, $s$, $m$, or $a$.

We use throughout the weights~\eqref{eq:def:weights x2 x3 x4}, the weighted combination $\lambda_{j, q}$ of~\eqref{eq:def:lambda j q} and the identities~\eqref{eq:weights identities}, all fixed in Subsection~\ref{sub:weighted combination}.

The coefficients $\lambda_{j, q}$ in~\eqref{eq:def:lambda j q} depend on the parameter $a$ only through the collision angles $\varphi_i^{q, mq}$.
Indeed, by~\eqref{eq:expression of linear functional} and the definition~\eqref{eq:def:Phi} of $\Phi^{j, p}$, we have
\[
	\ell_i^{q, mq}(\mathbf{e}_{i, j}) = \Phi^{j, p_i}(\varepsilon_i),
	\qquad
	(p_1, p_2) = (q, mq),
	\qquad
	\varepsilon_i \define \varphi_i^{q, mq} - \frac{\pi}{2 p_i}.
\]
We record two basic properties of these functionals.
First, since the cosine sum in~\eqref{eq:expression of linear functional} consists of $p_i$ terms of modulus at most $1$, the inequality $\sin x \leqslant x$ yields the universal bound
\begin{equation}\label{eq:triviality:universal bound}
	\abs[\big]{\ell_i^{p_1, p_2}(\mathbf{e}_{i, j})}
	\leqslant p_i \sin\varphi_i^{p_1, p_2}
	\leqslant \frac{\pi}{2} + p_i\,\abs{\varepsilon_i}
	\qquad (j \in \n_0).
\end{equation}
Second, since $\varepsilon_i \to 0$ as $a \to +\infty$ by Lemmas~\ref{lem:short orbit control} and~\ref{lem:tail orbit asymptotics}, evaluating at $\varepsilon_i = 0$ yields the \emph{limiting values}.
These coincide with the coefficients $D_0^{j, p_i}$ computed in~\eqref{eq:lem:Taylor expansion of auxiliary variable:coefficients:p mid j} and~\eqref{eq:lem:Taylor expansion of auxiliary variable:coefficients:p not mid j}:
\begin{equation}\label{eq:thm:rigidity:leading order value}
	\ell_i^{p_1, p_2}(\mathbf{e}_{i, j})\Big|_{\varepsilon_i = 0}
	=
	\begin{cases}
		(-1)^{j + j/p_i}\, p_i \sin\dfrac{\pi}{2p_i} & \quad \text{if } p_i \mid j; \\[4pt]
		0 & \quad \text{if } p_i \nmid j,
	\end{cases}
	\qquad (j \in \n_0),
\end{equation}
with the value $p_i \sin\frac{\pi}{2 p_i}$ at $j = 0$.

To control deviations from the limiting values, we expand $\Phi^{j, p}$ by Taylor's theorem with Lagrange remainder:
\begin{equation}\label{eq:lem:low mode:taylor}
	\Phi^{j, p}(\varepsilon_i) - \Phi^{j, p}(0)
	= D_1^{j, p}\, \varepsilon_i + \tfrac{1}{2}\, \bigl(\Phi^{j, p}\bigr)''(\vartheta)\, \varepsilon_i^2,
\end{equation}
where $\vartheta$ lies between $0$ and $\varepsilon_i$, and $D_1^{j, p} = (\Phi^{j, p})'(0)$ is the linear coefficient from Lemma~\ref{lem:Taylor expansion of auxiliary variable}.
Differentiating $\Phi^{j, p}(\varepsilon) = \sin\bigl(\frac{\pi}{2p} + \varepsilon\bigr)\, f_p\bigl(\frac{j\pi}{p} + 2j\varepsilon\bigr)$ twice yields
\[
	\bigl(\Phi^{j, p}\bigr)''(\vartheta)
	= -\sin\parentheses[\Big]{\frac{\pi}{2p} + \vartheta} f_p + 4j \cos\parentheses[\Big]{\frac{\pi}{2p} + \vartheta} f_p' + 4 j^2 \sin\parentheses[\Big]{\frac{\pi}{2p} + \vartheta} f_p'',
\]
where $f_p$ and its derivatives are evaluated at $\frac{j\pi}{p} + 2j\vartheta$.
In view of the bounds $\abs{f_p^{(k)}} = \mathcal{O}(p^{k+1})$ from the proof of Lemma~\ref{lem:Taylor expansion of auxiliary variable} and the prefactor estimate $\sin\bigl(\frac{\pi}{2p} + \vartheta\bigr) \leqslant \frac{\pi}{2p} + \abs{\varepsilon_i} = \mathcal{O}(p^{-1})$ (which holds since $p\,\abs{\varepsilon_i} = \mathcal{O}(1)$ for all orbits by Lemmas~\ref{lem:short orbit control} and~\ref{lem:tail orbit asymptotics}), the three terms are bounded by $\mathcal{O}(1)$, $\mathcal{O}(j p^2)$, and $\mathcal{O}(j^2 p^2)$, respectively.
This yields $\abs[\big]{\bigl(\Phi^{j, p}\bigr)''(\vartheta)} = \mathcal{O}\bigl((1 + j^2)\, p^2\bigr)$ uniformly for $\vartheta$ between $0$ and $\varepsilon_i$.
Similarly, the first derivative satisfies $\abs[\big]{\bigl(\Phi^{j, p}\bigr)'(\varepsilon)} = \mathcal{O}(j\, p)$ for $j \geqslant 1$, uniformly for $\abs{\varepsilon} \leqslant \abs{\varepsilon_i}$.

We denote by $\lambda^{\infty}_{j, q}$ the limiting value of $\lambda_{j, q}$ obtained by evaluating each functional in~\eqref{eq:def:lambda j q} at $\varepsilon_i = 0$.
Since $\varepsilon_i \to 0$ as $a \to +\infty$ by Lemma~\ref{lem:tail orbit asymptotics}, we have $\lim_{a \to +\infty} \lambda_{j, q} = \lambda^{\infty}_{j, q}$ for each fixed pair $(j, q)$.
If $j \geqslant 1$ and $q \nmid j$, then $mq \nmid j$ for every $m \in \set{2, \, 3, \, 4}$; both terms in~\eqref{eq:def:lambda j q} therefore vanish at $\varepsilon_i = 0$, giving $\lambda^{\infty}_{j, q} = 0$.
On the diagonal $j = q$, the second limiting value vanishes since $mq \nmid q$, and the first weight identity in~\eqref{eq:weights identities} gives
\begin{equation}\label{eq:triviality:frozen diagonal}
	\lambda^{\infty}_{q, q}
	= \sum_{m \in \set{2, \, 3, \, 4}} x_m^{(q)}\, (-1)^{q + 1}\, q \sin\frac{\pi}{2q}
	= \frac{2}{\pi}\, q \sin\frac{\pi}{2q}
	\;\geqslant\; \frac{2\sqrt{2}}{\pi}
	\;>\; 0.9003
	\qquad (q \geqslant q_0),
\end{equation}
where the lower bound is attained at $q = q_0 = 2$, as the function $x \mapsto x \sin\frac{\pi}{2x}$ is strictly increasing on $[1, +\infty)$.

Whenever the orbits $\Gamma^{q, mq}$ with $q \geqslant q_0$ and $m \in \set{2, \, 3, \, 4}$ exist and no diagonal coefficient $\lambda_{q, q}$ vanishes, we define
\begin{equation}\label{eq:triviality:K def}
	K_{q, j} \define
	\begin{cases}
		\lambda_{j, q}/\lambda_{q, q} & \quad \text{if } j \neq q; \\
		0 & \quad \text{if } j = q
	\end{cases}
	\qquad (q, j \geqslant q_0),
	\qquad\quad
	K_{\mathrm{low}} \define \parentheses[\big]{ \lambda_{j, q} / \lambda_{q, q} }_{q \geqslant q_0, \, 0 \leqslant j < q_0}.
\end{equation}
The limiting matrices $K^{\infty} = (K^{\infty}_{q, j})_{q, j \geqslant q_0}$ and $K^{\infty}_{\mathrm{low}} = (K^{\infty}_{\mathrm{low}, q, j})_{q \geqslant q_0, \, 0 \leqslant j < q_0}$ are defined by the same formulas with $\lambda^{\infty}$ in place of $\lambda$.
Because the limiting values~\eqref{eq:thm:rigidity:leading order value} depend only on arithmetic divisibility and $\lambda^{\infty}_{q, q} > 0$ by~\eqref{eq:triviality:frozen diagonal}, the matrices $K^{\infty}$ and $K^{\infty}_{\mathrm{low}}$ are well-defined independently of $a$.
Recall from Subsection~\ref{sub:linearized isospectral operator J} the space $h_{\gamma, q_0}$, the matrix norm $\matrixnorm{\cdot}[\gamma, q_0]$, and the invariance criterion: every matrix $L = (L_{q, j})_{q, j \geqslant q_0}$ whose columns satisfy $q^{\gamma} \abs{L_{q, j}} \to 0$ as $q \to +\infty$ for each fixed $j$ defines a bounded operator on $h_{\gamma, q_0}$, with operator norm bounded by $\matrixnorm{L}[\gamma, q_0]$.
Both $K^{\infty}$ and $K$ satisfy this column decay condition.
Indeed, $K^{\infty}_{q, j} = 0$ whenever $q > j/2$, because the off-diagonal entries in row $q$ are supported on multiples $j \in \set{2q, \, 3q, \, \dots}$; thus each column of $K^{\infty}$ has only finitely many non-zero entries.
For $K$, the column decay follows from the estimates in the proof of Lemma~\ref{lem:tail invertibility large a}.
Finally, since $\lambda^{\infty}_{j, q} = 0$ for $1 \leqslant j < q_0 \leqslant q$, only the $j = 0$ column of $K^{\infty}_{\mathrm{low}}$ is non-zero.

Finally, let $\widetilde{A}^{\flat}$ be the $2 \times 2$ matrix with rows indexed, in this order, by the orbits $\Gamma^{1, 2}$ and $\Gamma^{1, 3}$, columns indexed by $j \in \set{0, \, 1}$, and entries
\[
	\widetilde{A}^{\flat}_{(1, m), j}
	\define \Diffl{j}{1}{m}\big|_{\varepsilon_i = 0}
	\qquad (m \in \set{2, \, 3},\ j \in \set{0, \, 1}).
\]
Here $p_1 = 1$ divides every $j$, while $p_2 = m$ divides $j = 0$ but not $j = 1$; hence, by the limiting values~\eqref{eq:thm:rigidity:leading order value} together with $2\sin\frac{\pi}{4} = \sqrt{2}$ and $3\sin\frac{\pi}{6} = \tfrac{3}{2}$, we have
\begin{equation}\label{eq:triviality:A flat}
	\widetilde{A}^{\flat} =
	\begin{pmatrix}
		1 - \sqrt{2} & 1 \\[2pt]
		-\tfrac{1}{2} & 1
	\end{pmatrix},
	\qquad
	\det \widetilde{A}^{\flat} = \tfrac{3}{2} - \sqrt{2} > 0.
\end{equation}
In particular, $\widetilde{A}^{\flat}$ is independent of $a$.
For a square matrix $B$, let $\sigma_{\min}(B)$ and $\sigma_{\max}(B)$ denote its smallest and largest singular values, let $\norm{B}_{\mathrm{op}} \define \sigma_{\max}(B)$ denote its operator norm with respect to the Euclidean norm, and let $\norm{B}_F \define \bigl(\sum_{i, j} B_{i j}^2\bigr)^{1/2}$ denote its Frobenius norm.
The two singular values of a $2 \times 2$ matrix $B$ satisfy $\sigma_{\min} \sigma_{\max} = \abs{\det B}$ and $\sigma_{\min}^2 + \sigma_{\max}^2 = \norm{B}_F^2$; hence $\sigma_{\min}(B) \geqslant \abs{\det B}/\norm{B}_F$.
Since $\norm{\widetilde{A}^{\flat}}_F^2 = (1 - \sqrt{2})^2 + 1 + \frac{1}{4} + 1 = \frac{21}{4} - 2\sqrt{2}$, we obtain
\begin{equation}\label{eq:triviality:sigma flat}
	\sigma_{\min}\bigl(\widetilde{A}^{\flat}\bigr)
	\geqslant \frac{\abs{\det \widetilde{A}^{\flat}}}{\norm{\widetilde{A}^{\flat}}_F}
	= \frac{\tfrac{3}{2} - \sqrt{2}}{\sqrt{\tfrac{21}{4} - 2\sqrt{2}}}
	\geqslant 5.5 \times 10^{-2}.
\end{equation}

The following lemma establishes the uniform lower bound on the diagonal entries and controls the low-frequency columns $j \in \set{0, \, 1}$.

\begin{lemma}
\label{lem:low mode estimates}
	There exists a constant $a_{\mathrm{low}}(q_0) \geqslant 12$ such that for all $a \geqslant a_{\mathrm{low}}(q_0)$ and all $q \geqslant q_0$, the following statements hold:
	\begin{enumerate}
		\smallskip

		\item\label{itm:low mode diagonal}
		$\abs[\big]{\lambda_{q, q} - \lambda^{\infty}_{q, q}} = \mathcal{O}(c_3\, q^{-2})$; in particular, $\abs{\lambda_{q, q}} \geqslant \tfrac{7}{8}$, so that the matrices $K$ and $K_{\mathrm{low}}$ in~\eqref{eq:triviality:K def} are well-defined;

		\smallskip

		\item\label{itm:low mode one}
		$\lambda^{\infty}_{1, q} = 0$ and $\abs{\lambda_{1, q}} = \mathcal{O}(c_3\, q^{-4})$;

		\smallskip

		\item\label{itm:low mode zero}
		$\abs[\big]{\lambda^{\infty}_{0, q}} = \mathcal{O}(q^{-4})$ and $\abs[\big]{\lambda_{0, q} - \lambda^{\infty}_{0, q}} = \mathcal{O}(c_3\, q^{-4})$.
	\end{enumerate}
\end{lemma}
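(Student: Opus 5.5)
The plan is to write each deviation $\lambda_{j,q}-\lambda^{\infty}_{j,q}$ as a weighted sum of the differences $\Phi^{j,p_i}(\varepsilon_i)-\Phi^{j,p_i}(0)$, with $(p_1,p_2)=(q,mq)$. Each difference will be controlled by the Lagrange expansion~\eqref{eq:lem:low mode:taylor}, with the angle corrections supplied by Lemma~\ref{lem:tail orbit asymptotics}. That lemma applies for all $q\geqslant 2$ once $a\geqslant 12$ and gives
\[
\varepsilon_1=-c_3(1-m^{-2})q^{-3}+\mathcal{O}(c_3q^{-5}),\qquad \varepsilon_2=c_3m^{-1}(1-m^{-2})q^{-3}+\mathcal{O}(c_3q^{-5}).
\]
In particular $\abs{\varepsilon_i}\leqslant C_0c_3q^{-3}$. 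The weights $x_m^{(q)}$ are bounded by absolute constants. The threshold $a_{\mathrm{low}}(q_0)$ will be taken to be $12$, enlarged at the end so that the $\mathcal{O}(c_3)$ errors fall below explicit numerical margins.

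For (i), I set $j=q$. The linear term is $D_1^{q,p}\varepsilon_i$, where $\abs{D_1^{q,p}}=\mathcal{O}(qp)$ by Lemma~\ref{lem:Taylor expansion of auxiliary variable}; with $p\leqslant 4q$ this gives $\mathcal{O}(q^2\cdot c_3q^{-3})=\mathcal{O}(c_3q^{-1})$. That bound is too weak, so I would use the explicit coefficients instead.
\begin{itemize}
\item On the left arc $q\mid q$, so by~\eqref{eq:lem:Taylor expansion of auxiliary variable:coefficients:p mid j} $D_1^{q,q}=\pm q\cos\frac{\pi}{2q}=\mathcal{O}(q)$.
\item On the right arc $mq\nmid q$, so by~\eqref{eq:lem:Taylor expansion of auxiliary variable:coefficients:p not mid j} $D_1^{q,mq}=\pm 2mq^2\sin\frac{\pi}{2mq}/\sin\frac{\pi}{m}=\mathcal{O}(q)$, because $\sin\frac{\pi}{m}$ is bounded below for $m\in\{2,3,4\}$.
\end{itemize}
Both linear terms are therefore $\mathcal{O}(q\cdot c_3q^{-3})=\mathcal{O}(c_3q^{-2})$. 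The quadratic remainder is $\mathcal{O}((1+q^2)q^2)\varepsilon_i^2=\mathcal{O}(c_3^2q^{-2})$, and $c_3<1$ absorbs it. This proves $\abs{\lambda_{q,q}-\lambda^\infty_{q,q}}=\mathcal{O}(c_3q^{-2})$. Combined with~\eqref{eq:triviality:frozen diagonal}, which gives $\lambda^\infty_{q,q}>0.9003$, we get $\abs{\lambda_{q,q}}\geqslant 7/8$ once $c_3$ is small, i.e.\ once $a$ is large.

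For (ii), $\lambda^\infty_{1,q}=0$ holds because $q\geqslant 2$ does not divide $1$. The estimate of $\lambda_{1,q}$ itself follows the computation in Remark~\ref{rem:estimate of linear functions:special cases}~(ii), but with $c_3$ tracked and uniformly in $q\geqslant 2$ rather than asymptotically.
\begin{itemize}
\item The linear terms are $D_1^{1,p_i}\varepsilon_i$. Here $D_1^{1,p}=\pm 2p\sin\frac{\pi}{2p}/\sin\frac{\pi}{p}=\pm\bigl(1+\mathcal{O}(p^{-2})\bigr)$, explicitly $\pm1/\cos\frac{\pi}{2p}$. Inserting the expansions of $\varepsilon_1,\varepsilon_2$, the $q^{-3}$ parts produce $(-1)^{1+1}c_3(1-m^{-2})q^{-3}$ times factors $1/\cos\frac{\pi}{2q}$ and $m\cdot\frac{1}{m}/\cos\frac{\pi}{2mq}$, each equal to $1+\mathcal{O}(q^{-2})$.
\item The common leading part is proportional to $1-m^{-2}$, so the second identity in~\eqref{eq:weights identities} cancels it. The $\mathcal{O}(q^{-2})$ corrections and the $\mathcal{O}(c_3q^{-5})$ parts of $\varepsilon_i$ multiply into $\mathcal{O}(c_3q^{-5})$.
\end{itemize}
There is a bookkeeping problem in the $\mathcal{O}(q^{-2})$ correction to $D_1$. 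It differs between the two arcs, $\frac{\pi^2}{8q^2}$ versus $\frac{\pi^2}{8m^2q^2}$, and produces a term of order $c_3q^{-5}$. The statement only needs $\mathcal{O}(c_3q^{-4})$, so this term causes no difficulty. The quadratic remainder is $\mathcal{O}(p^2)\varepsilon_i^2=\mathcal{O}(c_3^2q^{-4})$.

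For (iii), the bound on $\lambda^\infty_{0,q}$ comes from expanding $p\sin\frac{\pi}{2p}=\frac{\pi}{2}-\frac{\pi^3}{48p^2}+\mathcal{O}(p^{-4})$. The $q^{-2}$ term is then proportional to $1-m^{-2}$ and cancels by~\eqref{eq:weights identities}, leaving $\mathcal{O}(q^{-4})$ uniformly for $q\geqslant 2$. For the deviation, $\Phi^{0,p}(\varepsilon)=p\sin(\frac{\pi}{2p}+\varepsilon)$, so the difference equals $p\cos\frac{\pi}{2p}\,\varepsilon_i+\mathcal{O}(p\varepsilon_i^2)$. The same cancellation argument as in (ii), applied to the leading $q^{-2}$ term $\mp c_3(1-m^{-2})q^{-2}$ as in Corollary~\ref{cor:zeroth mode difference}, then gives $\mathcal{O}(c_3q^{-4})$.

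The main obstacle will be making every remainder uniform down to $q=2$ with absolute constants, rather than only asymptotically in $q$. This requires checking that the remainder in Lemma~\ref{lem:tail orbit asymptotics} is genuinely $\mathcal{O}(c_3q^{-5})$ with absolute constants, and that $\sin\frac{j\pi}{p}$ stays bounded below where it is used. Should the uniformity fail for small $q$, I would fall back on the explicit finite set of values $q\leqslant q_1$: there the crude bound $\abs{\varepsilon_i}\leqslant C_0c_3q^{-3}$ already gives $\mathcal{O}(c_3)$ errors, which equal $\mathcal{O}(c_3q^{-4})$ up to constants depending only on $q_1$.
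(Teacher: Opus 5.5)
Your proposal follows the paper's proof essentially step for step. You use the Lagrange expansion~\eqref{eq:lem:low mode:taylor}, with the explicit coefficients $D_1^{j,p}$ on the diagonal rather than the crude $\mathcal{O}(jp)$ bound. You take the angle expansions from Lemma~\ref{lem:tail orbit asymptotics}, and for $j\in\{0,1\}$ you cancel the $(1-m^{-2})$-proportional leading term by the second identity in~\eqref{eq:weights identities}. The lower bound $7/8$ comes, as in the paper, from the margin $\lambda^{\infty}_{q,q}>0.9003$ in~\eqref{eq:triviality:frozen diagonal}.

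There is one computational slip in (ii). Since $2p\sin\frac{\pi}{2p}/\sin\frac{\pi}{p}=p/\cos\frac{\pi}{2p}$, the correct coefficient is $D_1^{1,p}=-p/\cos\frac{\pi}{2p}$, not $\pm 1/\cos\frac{\pi}{2p}$; you dropped the factor $p$. As a result, $D_1^{1,q}\varepsilon_1$ and $-D_1^{1,mq}\varepsilon_2$ each equal $c_3(1-m^{-2})q^{-2}+\mathcal{O}(c_3q^{-4})$, so the cancelled leading term is of order $c_3q^{-2}$, not $c_3q^{-3}$. Two error terms then contribute $\mathcal{O}(c_3q^{-4})$ rather than $\mathcal{O}(c_3q^{-5})$:
\begin{itemize}
\item the arc-dependent correction coming from $1/\cos\frac{\pi}{2q}$ versus $1/\cos\frac{\pi}{2mq}$;
\item the $\mathcal{O}(c_3q^{-5})$ parts of $\varepsilon_i$, once multiplied by $D_1^{1,p}=\mathcal{O}(q)$.
\end{itemize}
The stated bound $\mathcal{O}(c_3q^{-4})$ therefore still holds, but with no power of $q$ to spare, contrary to your remark that this term ``causes no difficulty''.

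Your worries about uniformity are unnecessary. Lemma~\ref{lem:tail orbit asymptotics} already comes with absolute constants for every $q\geqslant 2$. For $j=1$ you never need $\sin\frac{\pi}{p}$ to be bounded below, because the closed form only involves $\cos\frac{\pi}{2p}\geqslant\frac{\sqrt{2}}{2}$. The fallback to finitely many small $q$ is therefore not needed.
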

\begin{proof}
	Since $a \geqslant 12$, the periodic orbits $\Gamma^{q, mq}$ exist and are regular, Lemma~\ref{lem:tail orbit asymptotics} applies, and $c_3 = \pi^2/(8a) \leqslant \pi^2/96 < 1$.
	Throughout the proof, we expand each functional by the first-order expansion~\eqref{eq:lem:low mode:taylor}.

	\smallskip

	\ref{itm:low mode diagonal} (The diagonal).
	For $j = q$, we have $p_1 = q \mid q$, so~\eqref{eq:lem:Taylor expansion of auxiliary variable:coefficients:p mid j} gives $D_1^{q, q} = (-1)^{q + 1}\, q \cos\frac{\pi}{2q}$, whence $\abs{D_1^{q, q}} \leqslant q$.
	For the second functional, $p_2 = mq \nmid q$, and~\eqref{eq:lem:Taylor expansion of auxiliary variable:coefficients:p not mid j} yields
	\[
		D_1^{q, mq}
		= (-1)^{q}\, mq \cdot q\, \frac{2 \sin\frac{\pi}{2mq}}{\sin\frac{\pi}{m}},
		\qquad
		\abs[\big]{D_1^{q, mq}} \leqslant \frac{\pi q}{\sin\frac{\pi}{m}} \leqslant \sqrt{2}\, \pi\, q,
	\]
	using $2mq \sin\frac{\pi}{2mq} \leqslant \pi$ and $\sin\frac{\pi}{m} \geqslant \sin\frac{\pi}{4} = \frac{\sqrt{2}}{2}$ for $m \in \set{2, \, 3, \, 4}$.
	Applying the angle bound $\abs{\varepsilon_i} \leqslant C_0\, c_3\, q^{-3}$ from Lemma~\ref{lem:tail orbit asymptotics} together with $\bigl(\Phi^{q, p_i}\bigr)'' = \mathcal{O}(q^4)$, the expansion~\eqref{eq:lem:low mode:taylor} gives, for each $m \in \set{2, \, 3, \, 4}$,
	\[
		\abs[\Big]{ \Diffl{q}{q}{m} - \Diffl{q}{q}{m}\big|_{\varepsilon_i = 0} }
		= \mathcal{O}(q)\cdot\mathcal{O}(c_3\, q^{-3}) + \mathcal{O}(q^4)\cdot\mathcal{O}(c_3^2\, q^{-6})
		= \mathcal{O}(c_3\, q^{-2}).
	\]
	Because $c_3 \leqslant 1$, summing these estimates against the bounded weights $x_m^{(q)}$ proves the comparison in~\ref{itm:low mode diagonal}.
	Combining this bound with $\lambda^{\infty}_{q, q} \geqslant \frac{2\sqrt{2}}{\pi} > 0.9003$ from~\eqref{eq:triviality:frozen diagonal} and noting that $\frac{2\sqrt{2}}{\pi} - \frac{7}{8} > \frac{1}{40}$, we can choose $a_{\mathrm{low}}(q_0)$ sufficiently large so that
	\begin{equation}\label{eq:lem:tail invertibility:diagonal lower bound}
		\abs{\lambda_{q, q}} \geqslant \frac{7}{8}
		\qquad (q \geqslant q_0).
	\end{equation}

	\smallskip

	\ref{itm:low mode one} (The column $j = 1$).
	Since $q \geqslant q_0 > 1$, neither $q$ nor $mq$ divides $1$; thus $\lambda^{\infty}_{1, q} = 0$, and both functionals vanish at $\varepsilon_i = 0$.
	By the double-angle formula $\sin\frac{\pi}{p} = 2 \sin\frac{\pi}{2p} \cos\frac{\pi}{2p}$, the coefficients in~\eqref{eq:lem:Taylor expansion of auxiliary variable:coefficients:p not mid j} simplify to
	\begin{equation}\label{eq:lem:low mode:D1 at one}
		D_1^{1, p}
		= -\, p\, \frac{2\sin\frac{\pi}{2p}}{\sin\frac{\pi}{p}}
		= -\frac{p}{\cos\frac{\pi}{2p}}
		= -p\,\bigl(1 + \mathcal{O}(p^{-2})\bigr)
		\qquad (p \geqslant 2).
	\end{equation}
	Substituting the asymptotic expansions of Lemma~\ref{lem:tail orbit asymptotics}, we obtain
	\[
		D_1^{1, q}\, \varepsilon_1
		= c_3\,\bigl(1 - m^{-2}\bigr)\, q^{-2} + \mathcal{O}(c_3\, q^{-4}),
		\qquad
		D_1^{1, mq}\, \varepsilon_2
		= -\, c_3\,\bigl(1 - m^{-2}\bigr)\, q^{-2} + \mathcal{O}(c_3\, q^{-4}),
	\]
	while the second-order terms in~\eqref{eq:lem:low mode:taylor} contribute $\mathcal{O}(q^2)\cdot\mathcal{O}(c_3^2\, q^{-6}) = \mathcal{O}(c_3\, q^{-4})$.
	Hence
	\[
		\Diffl{1}{q}{m}
		= D_1^{1, q}\, \varepsilon_1 - D_1^{1, mq}\, \varepsilon_2 + \mathcal{O}(c_3\, q^{-4})
		= 2\, c_3\,\bigl(1 - m^{-2}\bigr)\, q^{-2} + \mathcal{O}(c_3\, q^{-4}),
	\]
	and the second weight identity in~\eqref{eq:weights identities} cancels the leading term:
	\[
		\lambda_{1, q}
		= 2\, c_3\, q^{-2} \sum_{m \in \set{2, \, 3, \, 4}} x_m^{(q)}\,\bigl(1 - m^{-2}\bigr) + \mathcal{O}(c_3\, q^{-4})
		= \mathcal{O}(c_3\, q^{-4}).
	\]

	\smallskip

	\ref{itm:low mode zero} (The column $j = 0$).
	Here both functionals reduce to $\ell_i^{q, mq}(\mathbf{e}_{i, 0}) = p_i \sin\varphi_i^{q, mq}$.
	For the limiting values, the expansion $p \sin\frac{\pi}{2p} = \frac{\pi}{2} - \frac{\pi^3}{48}\, p^{-2} + \mathcal{O}(p^{-4})$ gives
	\[
		\Diffl{0}{q}{m}\big|_{\varepsilon_i = 0}
		= q \sin\frac{\pi}{2q} - mq \sin\frac{\pi}{2mq}
		= -\frac{\pi^3}{48}\,\bigl(1 - m^{-2}\bigr)\, q^{-2} + \mathcal{O}(q^{-4}),
	\]
	and the second weight identity in~\eqref{eq:weights identities} yields $\lambda^{\infty}_{0, q} = \mathcal{O}(q^{-4})$.
	For the comparison, the limiting parts cancel exactly:
	\[
		\Diffl{0}{q}{m} - \Diffl{0}{q}{m}\big|_{\varepsilon_i = 0}
		= q\, \cos\frac{\pi}{2q}\cdot \varepsilon_1 - mq\, \cos\frac{\pi}{2mq}\cdot \varepsilon_2
		+ \mathcal{O}\bigl(q\,\varepsilon_1^2 + mq\, \varepsilon_2^2\bigr).
	\]
	Using Lemma~\ref{lem:tail orbit asymptotics} and the expansion $\cos\frac{\pi}{2p_i} = 1 + \mathcal{O}(q^{-2})$, we find that each of the terms $q\, \cos\frac{\pi}{2q}\cdot \varepsilon_1$ and $-mq\, \cos\frac{\pi}{2mq}\cdot \varepsilon_2$ equals $-c_3\,(1 - m^{-2})\, q^{-2} + \mathcal{O}(c_3\, q^{-4})$, while the quadratic remainder is $\mathcal{O}(q\, c_3^2\, q^{-6}) = \mathcal{O}(c_3\, q^{-4})$.
	Hence
	\[
		\Diffl{0}{q}{m} - \Diffl{0}{q}{m}\big|_{\varepsilon_i = 0}
		= -2\, c_3\,\bigl(1 - m^{-2}\bigr)\, q^{-2} + \mathcal{O}(c_3\, q^{-4}),
	\]
	and summing against the weights $x_m^{(q)}$ cancels the leading term once again, leaving $\abs{\lambda_{0, q} - \lambda^{\infty}_{0, q}} = \mathcal{O}(c_3\, q^{-4})$.
\end{proof}

The next lemma is the main estimate of the tail analysis.

\begin{lemma}\label{lem:tail invertibility large a}
	Let $q_0 = 2$ and $\gamma = \tfrac{7}{2}$.
	The limiting matrix $K^{\infty}$ satisfies $\matrixnorm{K^{\infty}}[\gamma, q_0] \leqslant 0.7323$.
	Moreover, there exists $a_K \geqslant a_{\mathrm{low}}(q_0)$, with $a_{\mathrm{low}}(q_0)$ as in Lemma~\ref{lem:low mode estimates}, such that for each $a \geqslant a_K$ the following hold:
	\begin{enumerate}
		\smallskip

		\item\label{itm:tail invertibility orbits}
		the hybrid periodic orbits $\Gamma^{q, mq}$ exist and are regular for all $q \geqslant q_0$ and all $m \in \set{2, \, 3, \, 4}$;

		\smallskip

		\item\label{itm:tail invertibility comparison}
		$\matrixnorm{K - K^{\infty}}[\gamma, q_0] \leqslant C_K\, c_3$ for a constant $C_K$ depending only on $q_0$; in particular,
		\[
			\matrixnorm{K - K^{\infty}}[\gamma, q_0] \leqslant 0.0677
			\qquad \text{and} \qquad
			\matrixnorm{K}[\gamma, q_0] \leqslant 0.8.
		\]
	\end{enumerate}
\end{lemma}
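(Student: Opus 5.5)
The proof splits into the explicit bound on $K^{\infty}$ and a perturbative comparison of $K$ with $K^{\infty}$.

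\emph{The limiting matrix.} By~\eqref{eq:thm:rigidity:leading order value}, $\lambda^{\infty}_{j,q}$ vanishes unless $q \mid j$. So the row $q$ of $K^{\infty}$ is supported on $j = sq$ with $s \geqslant 2$. There, evaluating the limiting values with $p_1 = q$ and $p_2 = mq$ gives
\[
	\lambda^{\infty}_{sq,q} = \pm\bigl[\, q\sin\tfrac{\pi}{2q} \textstyle\sum_m x_m^{(q)} (-1)^{sq+s} - \sum_m x_m^{(q)} \mathbf{1}_{m\mid s} (-1)^{sq+s/m}\, mq\sin\tfrac{\pi}{2mq} \bigr].
\]
I would bound $\abs{\lambda^{\infty}_{sq,q}}$ by a variant of $\tfrac{2}{\pi}\,\abs{P(s)}$-type quantities. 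The factors $p\sin\frac{\pi}{2p}$ lie in $[\sqrt{2},\tfrac{\pi}{2}]$ and are increasing in $p$, so at $q = 2$ they are not all equal to $\frac{\pi}{2}$. This forces a case analysis on $s \bmod 8$ and $3 \mid s$, exactly as in Lemma~\ref{lem:P bounds}(i), but with each $d_m$ term carrying the factor $\tfrac{2}{\pi}\,mq\sin\tfrac{\pi}{2mq} \leqslant 1$. Dividing by $\lambda^{\infty}_{q,q} = \tfrac{2}{\pi} q\sin\tfrac{\pi}{2q} \geqslant \tfrac{2\sqrt2}{\pi}$, the worst ratio occurs at $q = 2$. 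Summing $s^{-\gamma}$ times these case bounds with the same $\zeta$-function grouping as in Lemma~\ref{lem:P bounds}(ii) should give roughly $0.6252 \cdot \tfrac{\pi}{2\sqrt 2} \approx 0.69$, plus a small loss from the mismatch terms. I would organize this so the final constant is $\leqslant 0.7323$, possibly by treating $q = 2, 3$ explicitly and $q \geqslant 4$ uniformly.

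\emph{Existence.} Part~\ref{itm:tail invertibility orbits} is immediate from Lemma~\ref{lem:tail orbit asymptotics}, since $a_K \geqslant a_{\mathrm{low}} \geqslant 12$.

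\emph{Comparison.} Write
\[
	K_{q,j} - K^{\infty}_{q,j} = \frac{\lambda_{j,q}-\lambda^{\infty}_{j,q}}{\lambda_{q,q}} + \lambda^{\infty}_{j,q}\Bigl(\frac{1}{\lambda_{q,q}} - \frac{1}{\lambda^{\infty}_{q,q}}\Bigr).
\]
The second term is at most $\mathcal{O}(c_3 q^{-2})\,\abs{K^{\infty}_{q,j}}$ by Lemma~\ref{lem:low mode estimates}\ref{itm:low mode diagonal} and $\abs{\lambda_{q,q}} \geqslant \tfrac78$. Its contribution to the norm is therefore $\mathcal{O}(c_3)$.

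For the first term I would use~\eqref{eq:lem:low mode:taylor} with $\abs{\varepsilon_i} \leqslant C_0 c_3 q^{-3}$ from Lemma~\ref{lem:tail orbit asymptotics}, and split the $j$-range as in the proof of Theorem~\ref{thm:standard stadium finite-dimensional deformations}.
\begin{itemize}
	\item For $j = sq$, the bounds $D_1 = \mathcal{O}(jp)$ and $\Phi'' = \mathcal{O}(j^2p^2)$ give $\mathcal{O}(c_3 s^2 q^{-2} + c_3^2 s^2 q^{-2})$. Summed against $s^{-\gamma}$ this is $\mathcal{O}(c_3)$, using $\gamma > 3$.
	\item For $q \nmid j$ with $j \leqslant q^2$, the difference is $\lambda_{j,q}$ itself. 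I would reuse~\eqref{eq:thm:finite dim:nonmultiples lambda cancellation}, where every term now carries an explicit factor $c_3$ with absolute constants, together with the $\sin$-sum bound~\eqref{eq:sin sum} and the sinc claim.
	\item For $j > q^2$, I would use the universal bound~\eqref{eq:triviality:universal bound} minus the limiting value, giving $\mathcal{O}(\min(1, j q\, c_3 q^{-3}))$.
\end{itemize}

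\emph{Main obstacle.} The hard part is making every remainder genuinely linear in $c_3$ and uniform down to $q = q_0 = 2$, rather than only asymptotically in $q$. The previous section absorbed remainders by enlarging $q_0$, which is no longer allowed. The large-$j$ ranges are the delicate ones: the crude bound for $j > q^2$ has no $c_3$ factor, so I would instead use $\abs{\Phi(\varepsilon) - \Phi(0)} \leqslant \sup\abs{\Phi'}\,\abs{\varepsilon} = \mathcal{O}(j q \cdot c_3 q^{-3})$. The resulting weighted sum $q^{\gamma}\sum_j j^{1-\gamma} c_3 q^{-2}$ converges because $\gamma > 2$. Column decay of $K$ follows from the same bounds.

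Once $\matrixnorm{K-K^{\infty}}[\gamma,q_0] \leqslant C_K c_3$ is established, choose $a_K$ so that $C_K \pi^2/(8a_K) \leqslant 0.0677$. Then $\matrixnorm{K}[\gamma,q_0] \leqslant 0.7323 + 0.0677 = 0.8$.
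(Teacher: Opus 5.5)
Your proposal follows the paper's proof essentially step for step: $K^{\infty}$ is bounded through the $P(s)$ case analysis and $\zeta$-grouping of Lemma~\ref{lem:P bounds}, divided by the diagonal value at $q=2$, and $K-K^{\infty}$ is bounded through the same quotient identity, the Taylor expansion~\eqref{eq:lem:low mode:taylor} with $\lvert\varepsilon_i\rvert\leqslant C_0c_3q^{-3}$, the split into multiples $j=sq$ and non-multiples over the ranges $j\leqslant J$, $J<j\leqslant q^2$, $j>q^2$, and the same choice of $a_K$. Three differences of detail: the paper makes your ``small loss from the mismatch terms'' concrete by replacing every factor $p\sin\frac{\pi}{2p}$ by $\frac{\pi}{2}$ with an error $\lvert E(s,q)\rvert\leqslant 1.07\,q^{-2}$ uniform in $s$, which gives $(0.6252+0.0340)/0.9003<0.7323$; for $j>q^2$ the paper splits at $q^2/c_3$, although, as you observe, the mean-value bound alone suffices because $\gamma>2$; and for $j=sq$ your crude bound $D_1=\mathcal{O}(jp)$ actually gives a first-order term $\mathcal{O}(c_3sq^{-1})$ rather than $\mathcal{O}(c_3s^2q^{-2})$ (the paper uses the sharper bounds $\lvert D_1^{sq,q}\rvert\leqslant q$ and $\lvert D_1^{sq,mq}\rvert\leqslant\max\{mq,\sqrt{2}\pi sq\}$), but this is still summable against $s^{-\gamma}$.
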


Since $0.7323 < 1$ and $0.8 < 1$, the Neumann series shows that $\mathrm{Id} + K^{\infty}$ is invertible on $h_{\gamma, q_0}$, and $\mathrm{Id} + K$ is invertible for each $a \geqslant a_K$, with
\begin{equation}\label{eq:triviality:resolvent bounds}
	\matrixnorm[\big]{(\mathrm{Id} + K^{\infty})^{-1}}[\gamma, q_0] \leqslant \frac{1}{1 - 0.7323} \leqslant 4,
	\qquad
	\matrixnorm[\big]{(\mathrm{Id} + K)^{-1}}[\gamma, q_0] \leqslant \frac{1}{1 - 0.8} = 5.
\end{equation}

\begin{proof}
	We bound the limiting matrix, then the difference $K - K^{\infty}$, and conclude by fixing $a_K$.

	Since $\lambda^{\infty}_{j, q} = 0$ for $q \nmid j$ and the diagonal entry of $K^{\infty}$ is $0$ by definition, row $q$ of $K^{\infty}$ is supported on the multiples $j = sq$ with $s \geqslant 2$; since $q^{\gamma} (sq)^{-\gamma} = s^{-\gamma}$, we have
	\[
		\matrixnorm{K^{\infty}}[\gamma, q_0]
		= \sup_{q \geqslant q_0}\, \frac{1}{\lambda^{\infty}_{q, q}} \sum_{s = 2}^{+\infty} s^{-\gamma}\, \abs[\big]{\lambda^{\infty}_{sq, q}}.
	\]
	By the limiting values~\eqref{eq:thm:rigidity:leading order value},
	\begin{equation}\label{eq:lem:tail invertibility:frozen multiples}
		\lambda^{\infty}_{sq, q}
		= \sum_{m \in \set{2, \, 3, \, 4}} x_m^{(q)} \parentheses[\Big]{ (-1)^{sq + s}\, q \sin\frac{\pi}{2q} - \mathbf{1}_{m \mid s}\, (-1)^{sq + s/m}\, mq \sin\frac{\pi}{2mq} }.
	\end{equation}
	We approximate both factors $p \sin\frac{\pi}{2p}$ in~\eqref{eq:lem:tail invertibility:frozen multiples} by $\frac{\pi}{2}$. From $u - \tfrac{u^3}{6} \leqslant \sin u < u$ at $u = \frac{\pi}{2p}$, we obtain
	\begin{equation}\label{eq:lem:tail invertibility:sine replacement}
		0 < \frac{\pi}{2} - p \sin\frac{\pi}{2p} \leqslant \frac{\pi^3}{48}\, p^{-2}
		\qquad \text{for every real } p \geqslant 1.
	\end{equation}
	Using $(-1)^{sq + s} - \mathbf{1}_{m \mid s}\,(-1)^{sq + s/m} = (-1)^{sq}\, d_m(s)$ with $d_m$ as in~\eqref{eq:def:P}, and the products $x_m^{(q)} \cdot \frac{\pi}{2} = (-1)^{q+1}\bigl(\tfrac{128}{27}, \, 1, \, -\tfrac{128}{27}\bigr)$ for $m = 2, 3, 4$, we obtain
	\begin{equation}\label{eq:lem:tail invertibility:P decomposition}
		\lambda^{\infty}_{sq, q} = (-1)^{sq + q + 1}\, P(s) + E(s, q),
	\end{equation}
	The error $E(s, q)$ collects the two replacements~\eqref{eq:lem:tail invertibility:sine replacement}. The $q$-replacement is common to the three values of $m$ and carries the weight $\abs[\big]{\sum_m x_m^{(q)}} = \frac{2}{\pi}$, while the $mq$-replacements carry the individual weights $\abs[\big]{x_m^{(q)}}$ and gain a factor $m^{-2}$, with $\sum_m \abs[\big]{x_m^{(q)}}\, m^{-2} = \frac{2}{\pi} \cdot \frac{43}{27}$.
	Hence
	\begin{equation}\label{eq:lem:tail invertibility:E bound}
		\abs{E(s, q)}
		\leqslant \frac{\pi^2}{24} \parentheses[\Big]{ 1 + \frac{43}{27} } q^{-2}
		= \frac{35\pi^2}{324}\, q^{-2}
		< 1.07\, q^{-2},
		\qquad \text{uniformly in } s.
	\end{equation}
	Summing the bounds of Lemma~\ref{lem:P bounds}~(i) and the error~\eqref{eq:lem:tail invertibility:E bound} against $s^{-\gamma}$, and evaluating the resulting series by Lemma~\ref{lem:P bounds}~(ii), we obtain
	\begin{equation}\label{eq:lem:tail invertibility:K infty bound}
		\sum_{s = 2}^{+\infty} s^{-\gamma}\, \abs[\big]{\lambda^{\infty}_{sq, q}}
		\leqslant 0.6252 + 1.07\, \bigl(\zeta(\gamma) - 1\bigr)\, q^{-2}
		\qquad \bigl(\gamma = \tfrac{7}{2}\bigr).
	\end{equation}
	For $q \geqslant q_0 = 2$ the remainder is at most $1.07\,\bigl(\zeta(\tfrac{7}{2}) - 1\bigr)/4 < 0.0340$, so the right-hand side is at most $0.6252 + 0.0340 = 0.6592$.
	Dividing by the diagonal lower bound~\eqref{eq:triviality:frozen diagonal} gives
	\[
		\matrixnorm{K^{\infty}}[\gamma, q_0]
		\;\leqslant\; \frac{0.6592}{0.9003}
		\;<\; 0.7323.
	\]

	Assume $a \geqslant a_{\mathrm{low}}(q_0)$, so that the orbits exist, Lemmas~\ref{lem:tail orbit asymptotics} and~\ref{lem:low mode estimates} apply, $c_3 \leqslant 1$, and $\abs{\lambda_{q, q}} \geqslant \tfrac{7}{8}$.
	We show that
	\begin{equation}\label{eq:lem:tail invertibility:K comparison}
		\matrixnorm{K - K^{\infty}}[\gamma, q_0] \leqslant C_K\, c_3.
	\end{equation}
	Fix $q \geqslant q_0$ and split the column index $j \neq q$ into non-multiples and multiples of $q$.

	\smallskip

	\emph{Non-multiples ($q \nmid j$).}
	Here we have $\lambda^{\infty}_{j, q} = 0$ and hence $K^{\infty}_{q, j} = 0$; since $\abs{\lambda_{q, q}} \geqslant \tfrac{7}{8}$, it suffices to prove
	\begin{equation}\label{eq:lem:tail invertibility:nonmultiple goal}
		\sum_{\substack{j \geqslant q_0 \\ q \nmid j}} q^{\gamma}\, j^{-\gamma}\, \abs{\lambda_{j, q}}
		= \mathcal{O}(c_3).
	\end{equation}
	We treat the ranges $j \leqslant J \define \lfloor q^{2/3} \rfloor$, $J < j \leqslant q^2$, and $j > q^2$ in turn.

	For $j \leqslant q^2$ we expand $\lambda_{j, q}$ to first order.
	Both functionals vanish at $\varepsilon_i = 0$ (as $q \nmid j$ forces $mq \nmid j$), and the coefficients of Lemma~\ref{lem:Taylor expansion of auxiliary variable} satisfy, for $p \in \set{q, \, mq}$ and $p \nmid j$,
	\[
		D_1^{j, p} = (-1)^j\, p\, j\, \frac{2 \sin\frac{\pi}{2p}}{\sin\frac{j\pi}{p}},
		\qquad
		\abs[\big]{D_1^{j, p}} \leqslant \frac{\pi\, j}{\abs[\big]{\sin\frac{j\pi}{p}}} \leqslant \frac{\pi\, j}{\sin\frac{\pi}{p}} \leqslant \frac{\pi}{2}\, p\, j,
	\]
	where $\abs{\sin\frac{j\pi}{p}} \geqslant \sin\frac{\pi}{p} \geqslant \frac{2}{p}$ for $p \nmid j$ (write $j = kp + r$ with $0 < r < p$).
	Substituting the expansions of Lemma~\ref{lem:tail orbit asymptotics} into~\eqref{eq:lem:low mode:taylor}, we find that the leading terms carry the sine ratios, while the $\mathcal{O}(c_3\, q^{-5})$ remainders of $\varepsilon_i$ contribute $\abs{D_1^{j, p}} \cdot \mathcal{O}(c_3\, q^{-5}) = \mathcal{O}(c_3\, j\, q^{-4})$ and the second-order terms contribute $\mathcal{O}(j^2 q^2) \cdot \mathcal{O}(c_3^2\, q^{-6}) = \mathcal{O}(c_3\, j^2\, q^{-4})$; altogether
	\begin{equation}\label{eq:lem:tail invertibility:nonmultiple expansion}
		\lambda_{j, q}
		= (-1)^{j+1}\, \frac{2\, c_3\, j}{q^2}
		\sum_{m \in \set{2, \, 3, \, 4}} x_m^{(q)}\, \bigl(1 - m^{-2}\bigr)
		\parentheses[\Bigg]{
			\frac{\sin\frac{\pi}{2q}}{\sin\frac{j\pi}{q}}
			+ \frac{\sin\frac{\pi}{2mq}}{\sin\frac{j\pi}{mq}}
		}
		+ \mathcal{O}\bigl(c_3\, j^2\, q^{-4}\bigr).
	\end{equation}
	The first fraction is independent of $m$, so the second weight identity in~\eqref{eq:weights identities} cancels it; the same identity also allows us to insert $-\tfrac{1}{2j}$ into the second fraction:
	\begin{equation}\label{eq:lem:tail invertibility:nonmultiple cancellation}
		\lambda_{j, q}
		= (-1)^{j+1}\, \frac{2\, c_3\, j}{q^2}
		\sum_{m \in \set{2, \, 3, \, 4}} x_m^{(q)}\, \bigl(1 - m^{-2}\bigr)
		\parentheses[\Bigg]{
			\frac{\sin\frac{\pi}{2mq}}{\sin\frac{j\pi}{mq}} - \frac{1}{2j}
		}
		+ \mathcal{O}\bigl(c_3\, j^2\, q^{-4}\bigr).
	\end{equation}

	\emph{Range $q_0 \leqslant j \leqslant J$.}
	We claim
	\begin{equation}\label{eq:lem:tail invertibility:sin ratio}
		\abs[\Bigg]{
			\frac{\sin\frac{\pi}{2mq}}{\sin\frac{j\pi}{mq}} - \frac{1}{2j}
		}
		\leqslant C\,\frac{j}{q^2}
		\qquad (m \in \set{2, \, 3, \, 4},\ 1 \leqslant j \leqslant J).
	\end{equation}
	Set $\varpi \define \pi/(mq)$ and $\operatorname{sinc} u \define \sin u / u$, so that $\sin\frac{\varpi}{2}\big/\sin(j\varpi) = \frac{1}{2j}\, \operatorname{sinc}(\varpi/2)\big/\operatorname{sinc}(j\varpi)$.
	For $1 \leqslant j \leqslant J$ we have $j\varpi \leqslant \frac{\pi}{2}\, q^{-1/3} \leqslant \frac{\pi}{2}$; on $\bigl[0, \frac{\pi}{2}\bigr]$ the function $\operatorname{sinc}$ is smooth, takes the value $1$ and has vanishing derivative at $0$, and satisfies $\operatorname{sinc} \geqslant \operatorname{sinc}\bigl(\frac{\pi}{2}\bigr) = \frac{2}{\pi}$, so $\operatorname{sinc}(\varpi/2)/\operatorname{sinc}(j\varpi) - 1 = \mathcal{O}(j^2 \varpi^2)$, and the left-hand side of~\eqref{eq:lem:tail invertibility:sin ratio} equals $\frac{1}{2j}\,\mathcal{O}(j^2 \varpi^2) = \mathcal{O}(j\, q^{-2})$.
	Substituting~\eqref{eq:lem:tail invertibility:sin ratio} into~\eqref{eq:lem:tail invertibility:nonmultiple cancellation} gives the pointwise bound $\lambda_{j, q} = \mathcal{O}(c_3\, j^2\, q^{-4})$ on this range, whose contribution to~\eqref{eq:lem:tail invertibility:nonmultiple goal} is
	\[
		q^{\gamma} \sum_{q_0 \leqslant j \leqslant J} j^{-\gamma}\, \mathcal{O}\bigl(c_3\, j^2\, q^{-4}\bigr)
		= \mathcal{O}\bigl(c_3\, q^{\gamma - 4}\bigr) \sum_{j \geqslant q_0} j^{2 - \gamma}
		= \mathcal{O}(c_3),
	\]
	since $3 < \gamma < 4$.

	\emph{Range $J < j \leqslant q^2$.}
	Bounding the bracket in~\eqref{eq:lem:tail invertibility:nonmultiple cancellation} term by term, with $\sin\frac{\pi}{2mq} \leqslant \frac{\pi}{2mq}$ for the sine ratio and $\frac{2 c_3 j}{q^2} \cdot \frac{1}{2j} = c_3\, q^{-2}$ for the inserted term, gives
	\[
		\abs{\lambda_{j, q}}
		\leqslant C\, c_3 \parentheses[\Bigg]{
			j\, q^{-3} \sum_{m = 2}^{4} \frac{1}{\abs[\big]{\sin\frac{j\pi}{mq}}}
			+ q^{-2}
			+ j^2\, q^{-4}
		}
		\leqslant C\, c_3 \parentheses[\Bigg]{
			j\, q^{-3} \sum_{m = 2}^{4} \frac{1}{\abs[\big]{\sin\frac{j\pi}{mq}}}
			+ j^2\, q^{-4}
		},
	\]
	where the middle term was absorbed into the first term, using $1\big/\abs[\big]{\sin\frac{j\pi}{mq}} \geqslant mq/(\pi j)$, which gives $j\, q^{-3} \big/ \abs[\big]{\sin\frac{j\pi}{mq}} \geqslant \frac{2}{\pi}\, q^{-2}$.
	The $j^2 q^{-4}$ term contributes $\mathcal{O}\bigl(c_3\, q^{\gamma - 4} \sum_{j > J} j^{2 - \gamma}\bigr) = \mathcal{O}(c_3)$ to~\eqref{eq:lem:tail invertibility:nonmultiple goal}, as before.
	For the first term we bound, for each $m \in \set{2, \, 3, \, 4}$,
	\[
		S_m \define \sum_{\substack{J < j \leqslant q^2 \\ q \nmid j}} \frac{j^{1 - \gamma}}{\abs[\big]{\sin\frac{j\pi}{mq}}}.
	\]
	Writing $j = k(mq) + r$ with $k \geqslant 0$ and $1 \leqslant r \leqslant mq - 1$ gives $\abs[\big]{\sin\frac{j\pi}{mq}} = \sin\frac{r\pi}{mq}$, and the bound $\sin\theta \geqslant \tfrac{2}{\pi}\min\set{\theta, \, \pi - \theta}$ on $[0, \pi]$ gives $1\big/\sin\frac{r\pi}{mq} \leqslant mq\big/\bigl(2\min\set{r, \, mq - r}\bigr)$, and therefore
	\begin{equation}\label{eq:lem:tail invertibility:cosecant sum}
		\sum_{r = 1}^{mq - 1} \frac{1}{\sin\frac{r\pi}{mq}} = \mathcal{O}(q \log q).
	\end{equation}
	For $k \geqslant 1$ we have $j \geqslant kmq$, so $j^{1 - \gamma} \leqslant (kmq)^{1 - \gamma}$; combined with~\eqref{eq:lem:tail invertibility:cosecant sum} and the convergence of $\sum_{k \geqslant 1} k^{1 - \gamma}$, the terms with $k \geqslant 1$ contribute $\mathcal{O}(q^{2 - \gamma}\log q)$ to $S_m$.
	For $k = 0$ we have $j = r$ with $J < r \leqslant mq - 1$, and we split at $\lfloor mq/2 \rfloor$; the range $r \leqslant \lfloor mq/2 \rfloor$ contributes at most $\tfrac{mq}{2}\sum_{r > J} r^{-\gamma} = \mathcal{O}\bigl(q\, J^{1 - \gamma}\bigr)$, and the range $r > \lfloor mq/2 \rfloor$, after the substitution $r' = mq - r$, contributes $\mathcal{O}(q^{2 - \gamma}\log q)$.
	Hence $S_m = \mathcal{O}(q^{2 - \gamma}\log q) + \mathcal{O}\bigl(q\, J^{1 - \gamma}\bigr)$, and since $q^{\gamma} j^{-\gamma} \cdot c_3\, j\, q^{-3} = c_3\, q^{\gamma - 3}\, j^{1 - \gamma}$, the first term contributes
	\[
		c_3\, q^{\gamma - 3} \sum_{m = 2}^{4} S_m
		= \mathcal{O}\bigl(c_3\, q^{-1}\log q\bigr) + \mathcal{O}\bigl(c_3\, q^{\gamma/3 - 4/3}\bigr)
		= \mathcal{O}(c_3),
	\]
	where $\gamma/3 - 4/3 = -\tfrac{1}{6} < 0$ at $\gamma = \tfrac{7}{2}$.

	\emph{Range $j > q^2$.}
	Each functional satisfies two bounds: the universal bound~\eqref{eq:triviality:universal bound}, which is $\mathcal{O}(1)$, since $p_i \abs{\varepsilon_i} \leqslant 4\, C_0\, c_3\, q^{-2} \leqslant C_0$ for $c_3 \leqslant 1$ and $q \geqslant 2$; and the mean value bound $\abs[\big]{\Phi^{j, p}(\varepsilon_i)} \leqslant \sup \abs[\big]{(\Phi^{j, p})'} \cdot \abs{\varepsilon_i} = \mathcal{O}(j p)\cdot\mathcal{O}(c_3 q^{-3}) = \mathcal{O}(c_3\, j\, q^{-2})$, using $\Phi^{j, p}(0) = 0$ and the derivative estimate $\abs[\big]{(\Phi^{j, p})'} = \mathcal{O}(j p)$.
	Hence
	\[
		\abs{\lambda_{j, q}}
		\leqslant C \min\set[\big]{1, \, c_3\, j\, q^{-2}},
	\]
	and we split the range at $j_* \define q^2 / c_3$.
	On $q^2 < j \leqslant j_*$, the bound $c_3\, j\, q^{-2}$ contributes
	\[
		q^{\gamma} \cdot c_3\, q^{-2} \sum_{j > q^2} j^{1 - \gamma}
		= \mathcal{O}\bigl(c_3\, q^{2 - \gamma}\bigr).
	\]
	On $j > j_*$, the universal bound contributes
	\[
		q^{\gamma} \sum_{j > j_*} j^{-\gamma}
		= \mathcal{O}\bigl(q^{\gamma}\, j_*^{1 - \gamma}\bigr)
		= \mathcal{O}\bigl(c_3^{\gamma - 1}\, q^{2 - \gamma}\bigr)
		= \mathcal{O}\bigl(c_3\, q^{2 - \gamma}\bigr),
	\]
	where the last step uses $c_3 \leqslant 1$ and $\gamma \geqslant 2$.
	Both contributions are $\mathcal{O}(c_3)$, and~\eqref{eq:lem:tail invertibility:nonmultiple goal} follows.

	\smallskip

	\emph{Multiples ($j = sq$, $s \geqslant 2$).}
	We claim that
	\begin{equation}\label{eq:lem:tail invertibility:multiple comparison}
		\abs[\big]{\lambda_{sq, q} - \lambda^{\infty}_{sq, q}}
		\leqslant C\, c_3\, (s^2 + 1)\, q^{-2}
		\qquad (s \geqslant 1),
	\end{equation}
	using the expansion~\eqref{eq:lem:low mode:taylor} at $j = sq$.
	For the first functional, $q \mid sq$ gives $\abs[\big]{D_1^{sq, q}} \leqslant q$; for the second, either $m \mid s$, in which case $mq \mid sq$ and $\abs[\big]{D_1^{sq, mq}} \leqslant mq$, or $m \nmid s$, in which case
	\[
		D_1^{sq, mq} = (-1)^{sq}\, mq \cdot sq\, \frac{2\sin\frac{\pi}{2mq}}{\sin\frac{s\pi}{m}},
		\qquad
		\abs[\big]{D_1^{sq, mq}} \leqslant \frac{\pi\, s q}{\sin\frac{\pi}{m}} \leqslant \sqrt{2}\,\pi\, s q,
	\]
	since $\abs[\big]{\sin\frac{s\pi}{m}} \geqslant \sin\frac{\pi}{m} \geqslant \frac{\sqrt{2}}{2}$ for $m \in \set{2, \, 3, \, 4}$ and $m \nmid s$.
	With $\abs{\varepsilon_i} \leqslant C_0\, c_3\, q^{-3}$, the first-order terms are $\mathcal{O}(c_3\, s\, q^{-2})$, and the second-order terms are $\mathcal{O}(s^2 q^4) \cdot \mathcal{O}(c_3^2\, q^{-6}) = \mathcal{O}(c_3\, s^2\, q^{-2})$ using $c_3 \leqslant 1$; this proves~\eqref{eq:lem:tail invertibility:multiple comparison}.
	We also record, from the decomposition~\eqref{eq:lem:tail invertibility:P decomposition} and the case bounds,
	\begin{equation}\label{eq:lem:tail invertibility:frozen multiples bound}
		\abs[\big]{\lambda^{\infty}_{sq, q}}
		\leqslant \abs{P(s)} + 1.07\, q^{-2}
		\leqslant \frac{256}{27} + \frac{1.07}{4}
		< 9.75
		\qquad (s \geqslant 2,\ q \geqslant q_0).
	\end{equation}

	For $s \geqslant 2$, combining the algebraic identity
	\[
		K_{q, sq} - K_{q, sq}^{\infty}
		= \frac{\lambda_{sq, q} - \lambda_{sq, q}^{\infty}}{\lambda_{q, q}}
		+ \lambda_{sq, q}^{\infty}\, \frac{\lambda_{q, q}^{\infty} - \lambda_{q, q}}{\lambda_{q, q}\, \lambda_{q, q}^{\infty}}
	\]
	with~\eqref{eq:lem:tail invertibility:multiple comparison}, \eqref{eq:lem:tail invertibility:frozen multiples bound}, and the diagonal bounds of Lemma~\ref{lem:low mode estimates}\,\ref{itm:low mode diagonal} and~\eqref{eq:triviality:frozen diagonal}, we obtain
	\[
		\abs[\big]{K_{q, sq} - K_{q, sq}^{\infty}} \leqslant C\, c_3\, (s^2 + 1)\, q^{-2}.
	\]
	Summing against $s^{-\gamma}$ (convergent, since $\gamma = \tfrac{7}{2} > 3$) gives
	\[
		\sup_{q \geqslant q_0} \sum_{s \geqslant 2} s^{-\gamma}\, \abs[\big]{K_{q, sq} - K_{q, sq}^{\infty}}
		= \mathcal{O}(c_3).
	\]
	Together with~\eqref{eq:lem:tail invertibility:nonmultiple goal} this proves~\eqref{eq:lem:tail invertibility:K comparison}.

	Every requirement so far bounds $a$ from below by a threshold depending only on $q_0$, and $c_3 = \pi^2/(8a)$ decreases in $a$.
	Define
	\[
		a_K \define \max \set[\bigg]{ a_{\mathrm{low}}(q_0), \, \frac{\pi^2\, C_K}{8 \times 0.0677} },
	\]
	with $C_K$ the constant of~\eqref{eq:lem:tail invertibility:K comparison}.
	For $a \geqslant a_K$, clause~\ref{itm:tail invertibility orbits} holds by Lemma~\ref{lem:tail orbit asymptotics}, since $a_{\mathrm{low}}(q_0) \geqslant 12$; the comparison~\eqref{eq:lem:tail invertibility:K comparison} gives $\matrixnorm{K - K^{\infty}}[\gamma, q_0] \leqslant C_K\, c_3 \leqslant 0.0677$; and the triangle inequality gives
	\[
		\matrixnorm{K}[\gamma, q_0]
		\leqslant \matrixnorm{K^{\infty}}[\gamma, q_0] + \matrixnorm{K - K^{\infty}}[\gamma, q_0]
		\leqslant 0.7323 + 0.0677
		= 0.8.
	\]
	The proof is complete.
\end{proof}

For $a \geqslant a_K$ we set
\[
	M \define (\mathrm{Id} + K)^{-1} K_{\mathrm{low}},
	\qquad
	M^{\infty} \define (\mathrm{Id} + K^{\infty})^{-1} K^{\infty}_{\mathrm{low}}.
\]
The matrix $M^{\infty}$ is independent of $a$.
By Lemma~\ref{lem:low mode estimates}, together with $c_3 \leqslant 1$ and the diagonal lower bounds of Lemma~\ref{lem:low mode estimates}\,\ref{itm:low mode diagonal} and~\eqref{eq:triviality:frozen diagonal}, the columns of $K_{\mathrm{low}}$ and $K^{\infty}_{\mathrm{low}}$ satisfy
\begin{equation}\label{eq:thm:rigidity:lambda jq decay claim}
	\abs[\big]{(K_{\mathrm{low}})_{q, j}} = \mathcal{O}\bigl(q^{-4}\bigr),
	\quad
	\abs[\big]{(K^{\infty}_{\mathrm{low}})_{q, 0}} = \mathcal{O}\bigl(q^{-4}\bigr),
	\quad
	(K^{\infty}_{\mathrm{low}})_{q, 1} = 0
	\quad (q \geqslant q_0,\ j \in \set{0, \, 1}).
\end{equation}
Since $\gamma < 4$, each column lies in $h_{\gamma, q_0}$, with
\begin{equation}\label{eq:thm:rigidity:Klow column norm}
	\matrixnorm{(K_{\mathrm{low}})_{\cdot, j}}[\gamma, q_0]
	= \sup_{q \geqslant q_0} q^{\gamma}\, \abs[\big]{(K_{\mathrm{low}})_{q, j}}
	= \mathcal{O}(1),
\end{equation}
and likewise for $K^{\infty}_{\mathrm{low}}$.
Applying the resolvent bounds~\eqref{eq:triviality:resolvent bounds} and the entrywise bound $\abs{v_{q}} \leqslant \matrixnorm{v}[\gamma, q_0]\, q^{-\gamma}$ gives
\begin{equation}\label{eq:thm:rigidity:decay of inv K low}
	\abs[\big]{M_{q, j}} = \mathcal{O}\bigl(q^{-\gamma}\bigr)
	\qquad (q \geqslant q_0,\ j \in \set{0, \, 1}).
\end{equation}
Left multiplication by $(\mathrm{Id} + K^{\infty})^{-1}$ acts on each column separately; since the $j = 1$ column of $K^{\infty}_{\mathrm{low}}$ vanishes by~\eqref{eq:thm:rigidity:lambda jq decay claim}, so does that of $M^{\infty}$:
\begin{equation}\label{eq:thm:rigidity:Mfix column decay}
	M^{\infty}_{q, 1} = 0,
	\qquad
	\abs[\big]{M^{\infty}_{q, 0}} = \mathcal{O}\bigl(q^{-\gamma}\bigr)
	\qquad (q \geqslant q_0).
\end{equation}

The \emph{limiting tail correction} is the $2 \times 2$ matrix
\begin{equation}\label{eq:thm:rigidity:Efix def}
	(E^{\infty})_{(1, m), j}
	\define
	-\sum_{q' = q_0}^{+\infty} \Diffl{q'}{1}{m}\big|_{\varepsilon_i = 0}\; M^{\infty}_{q', j}
	\qquad (m \in \set{2, \, 3},\ j \in \set{0, \, 1}).
\end{equation}
The series converges absolutely, since $\abs[\big]{\Diffl{q'}{1}{m}\big|_{\varepsilon_i = 0}} \leqslant \pi$ by~\eqref{eq:triviality:universal bound} and $\sum_{q'} q'^{-\gamma} < +\infty$.
By~\eqref{eq:thm:rigidity:Mfix column decay}, only the $j = 0$ column of $E^{\infty}$ is non-zero; in particular, the matrix $E^{\infty}$ has rank at most one.
Like $\widetilde{A}^{\flat}$, the matrix $E^{\infty}$ is independent of $a$.

The second supporting lemma bounds this tail correction matrix.

\begin{lemma}\label{lem:admissible threshold}
	Let $q_0 = 2$ and let $E^{\infty}$ be the limiting tail correction~\eqref{eq:thm:rigidity:Efix def}. Then
	\[
		\norm{E^{\infty}}_{\mathrm{op}} \leqslant 4.6 \times 10^{-2}.
	\]
\end{lemma}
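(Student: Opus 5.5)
Since $M^{\infty}_{q, 1} = 0$ by~\eqref{eq:thm:rigidity:Mfix column decay}, only the $j = 0$ column of $E^{\infty}$ is non-zero. Hence $\norm{E^{\infty}}_{\mathrm{op}}$ equals the Euclidean norm of the vector $\bigl(E^{\infty}_{(1,2),0}, E^{\infty}_{(1,3),0}\bigr)$. The plan is to make both ingredients of~\eqref{eq:thm:rigidity:Efix def} explicit and then bound the series numerically, isolating the dominant index $q' = 2$.

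\emph{Step 1: the coefficients.} By the limiting values~\eqref{eq:thm:rigidity:leading order value} with $p_1 = 1$, $p_2 = m$, we get
\[
	c_m(q') \define \Diffl{q'}{1}{m}\big|_{\varepsilon_i = 0} = 1 - \mathbf{1}_{m \mid q'}\,(-1)^{q' + q'/m}\, m \sin\tfrac{\pi}{2m}.
\]
Thus $c_m(q') = 1$ when $m \nmid q'$, and $\abs{c_m(q')} \leqslant 1 + \tfrac{\pi}{2}$ always. For $q' = 2$ we have $c_2(2) = 1 + \sqrt2$ and $c_3(2) = 1$.

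\emph{Step 2: the vector $M^{\infty}_{\cdot,0}$.} Put $v_q \define (K^{\infty}_{\mathrm{low}})_{q,0} = \lambda^{\infty}_{0,q}/\lambda^{\infty}_{q,q}$. Write $f(p) \define p\sin\frac{\pi}{2p}$. Then $\lambda^{\infty}_{0,q} = \sum_m x_m^{(q)}\bigl(f(q) - f(mq)\bigr)$. Expanding $f(p) = \frac{\pi}{2} - \frac{\pi^3}{48}p^{-2} + \frac{\pi^5}{3840}p^{-4} - \dots$, the $p^{-2}$ terms cancel by~\eqref{eq:weights identities}. What remains is an explicit $q^{-4}$ term with coefficient $\frac{\pi^5}{3840}\sum_m \abs{x_m}(1 - m^{-4})$ plus a tail, giving a numerical bound on $\sup_q q^{\gamma}\abs{v_q}$. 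For $q = 2$ I will evaluate exactly: $f(2), f(4), f(6), f(8) \approx 1.41421, 1.53073, 1.55291, 1.56072$ give $\lambda^{\infty}_{0,2} \approx -2.2\times 10^{-3}$ and $v_2 \approx -2.45 \times 10^{-3}$.

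Next I use the structure of $K^{\infty}$: row $q$ is supported on $j = sq$ with $s \geqslant 2$. The equation $(\mathrm{Id} + K^{\infty})M^{\infty}_{\cdot,0} = v$ therefore reads
\[
	M^{\infty}_{q,0} = v_q - \sum_{s \geqslant 2} K^{\infty}_{q,sq}\,M^{\infty}_{sq,0}.
\]
By~\eqref{eq:triviality:resolvent bounds} we have $\abs{M^{\infty}_{q,0}} \leqslant 4\,\matrixnorm{v}[\gamma,q_0]\,q^{-\gamma}$. Feeding this into the right-hand side at $q = 2$ makes the correction $\sum_s \abs{K^{\infty}_{2,2s}}\abs{M^{\infty}_{2s,0}}$ small, of order $4\matrixnorm{v}[\gamma,q_0]\,2^{-\gamma}\cdot 0.73$. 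So $M^{\infty}_{2,0}$ is pinned to $v_2$ up to a controlled error.

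\emph{Step 3: assembling.} Split
\[
	E^{\infty}_{(1,m),0} = -c_m(2)M^{\infty}_{2,0} - \sum_{q' \geqslant 3} c_m(q')M^{\infty}_{q',0}.
\]
Bound the tail by $(1 + \frac{\pi}{2})\cdot 4\matrixnorm{v}[\gamma,q_0]\,\bigl(\zeta(\gamma) - 1 - 2^{-\gamma}\bigr)$. The head uses the sharpened value of $M^{\infty}_{2,0}$. Finally take the Euclidean norm of the two entries.

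\emph{Main obstacle.} The crude estimate $(1 + \frac{\pi}{2})\cdot 4\cdot(\zeta(\frac72) - 1)\matrixnorm{v}[\gamma,q_0]$ is about $0.04$ per entry. The $\sqrt2$ from two entries would then overshoot $4.6\times 10^{-2}$. So the work lies in the sharp numerics: computing $\matrixnorm{v}[\gamma,q_0]$ precisely, treating $q' = 2$ (and if needed $q' = 3, 4$) exactly rather than through the resolvent bound, and using that $c_3(q') = 1$ off multiples of $3$ so the $m = 3$ entry is smaller. I would first tabulate $v_q$ for $2 \leqslant q \leqslant 8$ exactly and use the asymptotic $q^{-4}$ bound beyond that. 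If the margin is still too thin, I would replace the factor $4$ by a two-term Neumann expansion with an explicit remainder.
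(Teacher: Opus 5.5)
\paragraph{Main issue: the $q^{-4}$ coefficient.} Your overall architecture is the paper's. Only the $j=0$ column survives, $M^{\infty}_{\cdot,0}=(\mathrm{Id}+K^{\infty})^{-1}v$ is controlled through the resolvent bound, and it is then summed against the explicit coefficients $c_m(q')$. The problem is that the one input carrying the whole estimate is mis-stated.

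You give the surviving $q^{-4}$ coefficient of $\lambda^{\infty}_{0,q}$ as $\frac{\pi^5}{3840}\sum_m\abs{x_m}(1-m^{-4})$. The coefficient is signed, and the signs are essential. By the second identity in~\eqref{eq:weights identities},
\[
\sum_m x_m^{(q)}(1-m^{-4})=\sum_m x_m^{(q)}(1-m^{-2})m^{-2}=(-1)^{q+1}\tfrac{115}{81\pi}\approx\pm 0.452,
\]
whereas $\sum_m\abs{x_m^{(q)}}(1-m^{-4})\approx 6.46$. The difference arises because $x_4^{(q)}=-x_2^{(q)}$ nearly cancels $x_2^{(q)}$ at this order. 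Your coefficient is therefore $\approx 0.515$ instead of $\frac{115\pi^4}{311040}\approx 0.036$.

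Used for $q\geqslant 9$ as you plan, it allows $q^{\gamma}\abs{v_q}$ up to about $0.17$. That is six times the true value $\matrixnorm{v}[\gamma,q_0]\approx 0.0278$, which is attained at $q=2$ where $v_2\approx -2.45\times 10^{-3}$, as you computed. Your Step~3 would then give about $0.25$, far above $4.6\times 10^{-2}$. With the signed coefficient and an explicit $\mathcal{O}(p^{-6})$ remainder, the range $q\geqslant 9$ is harmless. The paper obtains that remainder by truncating the alternating series of $\sin x/x$ after the $x^4$ term.

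\paragraph{The pinning step does not sharpen anything.} Because $\abs{v_2}=2^{-\gamma}\matrixnorm{v}[\gamma,q_0]$, your correction $4\cdot 0.7323\cdot 2^{-\gamma}\matrixnorm{v}[\gamma,q_0]$ is almost three times $\abs{v_2}$. So it only gives $\abs{M^{\infty}_{2,0}}\leqslant 3.93\cdot 2^{-\gamma}\matrixnorm{v}[\gamma,q_0]$, no better than the plain resolvent bound.

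Your route does close, but for a different reason. Use the exact value $\matrixnorm{v}[\gamma,q_0]\approx 0.0278$, the exact head values $c_2(2)=1+\sqrt{2}$ and $c_3(2)=1$, and the crude bound $1+\frac{\pi}{2}$ for $q'\geqslant 3$. Keeping the two entries separate then gives about $0.034$ and $0.021$, hence $\norm{E^{\infty}}_{\mathrm{op}}\approx 0.040$. With $\matrixnorm{v}[\gamma,q_0]\leqslant 0.0346$ instead, the same scheme gives about $0.050$, so the small-$q$ tabulation is indispensable for you.

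\paragraph{How the paper does it.} The paper needs no tabulation. It keeps $\matrixnorm{v}[\gamma,q_0]\leqslant 0.0346$ and uses $1/(1-0.7323)<3.74$ instead of $4$. It then sums the exact values $\abs{c_m(q')}$ by residue classes: $1$, $1+\sqrt2$, $\sqrt2-1$ for $m=2$, and $1$, $\frac12$ for $m=3$ (since $(-1)^{q'+q'/3}=1$ whenever $3\mid q'$). This gives $\sum_{q'\geqslant 2}q'^{-\gamma}\abs{c_2(q')}\leqslant 0.2503$, hence $\abs{E^{\infty}_{(1,m),0}}\leqslant 0.1295\cdot 0.2503$ for both $m$.
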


\begin{proof}
	By~\eqref{eq:thm:rigidity:Mfix column decay}, only the $j = 0$ column of $E^{\infty}$ is non-zero, and it is given by
	\begin{equation}\label{eq:lem:Efix:column}
		(E^{\infty})_{(1, m), 0}
		= -\sum_{q' \geqslant q_0} \Diffl{q'}{1}{m}\big|_{\varepsilon_i = 0}\, g_{q'},
		\qquad
		g \define M^{\infty}_{\cdot, 0} = (\mathrm{Id} + K^{\infty})^{-1} \parentheses[\big]{ K_{\mathrm{low}}^{\infty} }_{\cdot, 0} .
	\end{equation}
	We first bound $g$ in $h_{\gamma, q_0}$ and then sum against the limiting coefficients.

	\smallskip

	\emph{The column $\parentheses[\big]{ K_{\mathrm{low}}^{\infty} }_{\cdot, 0}$.}
	For $p \geqslant 1$, set $x \define \frac{\pi}{2p} \in (0, \tfrac{\pi}{2}]$, so that $p \sin\frac{\pi}{2p} = \frac{\pi}{2}\, \frac{\sin x}{x}$.
	On this interval, the terms of the alternating series $\frac{\sin x}{x} = \sum_{k = 0}^{\infty} \frac{(-1)^k x^{2k}}{(2k + 1)!}$ strictly decrease in absolute value. Truncating after the term $k = 2$ yields
	\begin{equation}\label{eq:lem:Efix:sine expansion}
		p \sin\tfrac{\pi}{2p}
		= \tfrac{\pi}{2} - \tfrac{\pi^3}{48}\, p^{-2} + \tfrac{\pi^5}{3840}\, p^{-4} - \mathcal{R}(p),
		\qquad
		0 \leqslant \mathcal{R}(p) \leqslant \tfrac{\pi^7}{645120}\, p^{-6}.
	\end{equation}
	By the limiting values~\eqref{eq:thm:rigidity:leading order value} at $j = 0$, where both $p_1 = q'$ and $p_2 = mq'$ divide $j$,
	\[
		\Diffl{0}{q'}{m}\big|_{\varepsilon_i = 0}
		= q' \sin\tfrac{\pi}{2q'} - mq' \sin\tfrac{\pi}{2mq'}
		= -\tfrac{\pi^3}{48}\, (1 - m^{-2})\, q'^{-2} + \tfrac{\pi^5}{3840}\, (1 - m^{-4})\, q'^{-4} - \bigl( \mathcal{R}(q') - \mathcal{R}(mq') \bigr).
	\]
	Summing against the weights $x_m^{(q')}$ over $m \in \set{2, \, 3, \, 4}$, the second identity in~\eqref{eq:weights identities} cancels the term proportional to $q'^{-2}$.
	Using $\sum_m x_m^{(q')} (1 - m^{-4}) = (-1)^{q' + 1}\, \frac{115}{81 \pi}$, we obtain
	\[
		\lambda^{\infty}_{0, q'}
		= (-1)^{q' + 1}\, \frac{115\, \pi^4}{311040}\, q'^{-4}
		- \sum_{m \in \set{2, \, 3, \, 4}} x_m^{(q')} \bigl( \mathcal{R}(q') - \mathcal{R}(mq') \bigr),
	\]
	with the remainder bounded by $\frac{\pi^7}{645120} \bigl( \sum_{m \in \set{2, \, 3, \, 4}} \abs[\big]{x_m^{(q')}} (1 + m^{-6}) \bigr)\, q'^{-6}$.
	Since $q'^{-2} \leqslant \tfrac{1}{4}$ for $q' \geqslant q_0 = 2$, this gives
	\[
		\sup_{q' \geqslant q_0} q'^{4}\, \abs[\big]{\lambda^{\infty}_{0, q'}}
		\leqslant
		\frac{115\, \pi^4}{311040}
		+ \frac{1}{4} \cdot \frac{\pi^7}{645120} \sum_{m \in \set{2, \, 3, \, 4}} \abs[\big]{x_m^{(q')}} (1 + m^{-6})
		\leqslant 4.39 \times 10^{-2}.
	\]
	Since $q'^{\gamma} = q'^{4}\, q'^{-1/2} \leqslant q'^{4} / \sqrt{2}$ for $q' \geqslant q_0 = 2$, combining this estimate with the diagonal lower bound~\eqref{eq:triviality:frozen diagonal} yields
	\begin{equation}\label{eq:lem:Efix:Klow column}
		\matrixnorm[\big]{ \parentheses[\big]{ K_{\mathrm{low}}^{\infty} }_{\cdot, 0} }[\gamma, q_0]
		= \sup_{q' \geqslant q_0} q'^{\gamma}\, \frac{ \abs[\big]{\lambda^{\infty}_{0, q'}} }{ \lambda^{\infty}_{q', q'} }
		\leqslant \frac{4.39 \times 10^{-2}}{\sqrt{2} \cdot 0.9003}
		\leqslant 3.46 \times 10^{-2}.
	\end{equation}

	\smallskip

	\emph{The vector $g$.}
	By the resolvent bound~\eqref{eq:triviality:resolvent bounds}, $\matrixnorm{(\mathrm{Id} + K^{\infty})^{-1}}[\gamma, q_0] \leqslant 1/(1 - 0.7323) < 3.74$, so~\eqref{eq:lem:Efix:Klow column} gives $\matrixnorm{g}[\gamma, q_0] \leqslant 3.74 \cdot 3.46 \times 10^{-2} \leqslant 0.1295$, that is,
	\begin{equation}\label{eq:lem:Efix:g bound}
		\abs{g_{q'}} \leqslant 0.1295\, q'^{-\gamma}
		\qquad (q' \geqslant q_0).
	\end{equation}

	\smallskip

	\emph{The limiting values.}
	By~\eqref{eq:def:lambda j q} and~\eqref{eq:thm:rigidity:leading order value} with $(p_1, p_2) = (1, m)$, where $p_1 = 1$ divides every $q'$,
	\[
		\Diffl{q'}{1}{2}\big|_{\varepsilon_i = 0} = 1 - \mathbf{1}_{2 \mid q'}\, (-1)^{q' + q'/2}\, \sqrt{2},
		\qquad
		\Diffl{q'}{1}{3}\big|_{\varepsilon_i = 0} = 1 - \mathbf{1}_{3 \mid q'}\, (-1)^{q' + q'/3}\, \tfrac{3}{2}.
	\]
	Thus $\abs[\big]{\Diffl{q'}{1}{2}\big|_{\varepsilon_i = 0}}$ equals $1$ for $q'$ odd, $\sqrt{2} + 1$ for $q' \equiv 2 \pmod 4$, and $\sqrt{2} - 1$ for $4 \mid q'$; moreover, since $(-1)^{q' + q'/3} = 1$ whenever $3 \mid q'$, the coefficient $\abs[\big]{\Diffl{q'}{1}{3}\big|_{\varepsilon_i = 0}}$ equals $\tfrac{1}{2}$ for $3 \mid q'$ and $1$ otherwise.
	Summing against $q'^{-\gamma}$ over $q' \geqslant 2$ by residue classes, using $\sum_{t \geqslant 1,\, t\, \mathrm{odd}} t^{-\gamma} = (1 - 2^{-\gamma})\, \zeta(\gamma)$, we obtain
	\begin{align*}
		T_{1, 2} &\define \sum_{q' \geqslant q_0} q'^{-\gamma}\, \abs[\big]{\Diffl{q'}{1}{2}\big|_{\varepsilon_i = 0}}
		= \Bigl[ (1 - 2^{-\gamma}) \bigl( 1 + (\sqrt{2} + 1)\, 2^{-\gamma} \bigr) + (\sqrt{2} - 1)\, 4^{-\gamma} \Bigr]\, \zeta(\gamma) - 1
		\leqslant 0.2503, \\
		T_{1, 3} &\define \sum_{q' \geqslant q_0} q'^{-\gamma}\, \abs[\big]{\Diffl{q'}{1}{3}\big|_{\varepsilon_i = 0}}
		= \zeta(\gamma) - 1 - \tfrac{1}{2}\, 3^{-\gamma}\, \zeta(\gamma)
		\leqslant 0.115,
	\end{align*}
	where the numerical bounds use $\gamma = \tfrac{7}{2}$ and $\zeta(\tfrac{7}{2}) < 1.127$ (as $\sum_{n \leqslant 10} n^{-7/2} + \int_{10}^{+\infty} x^{-7/2}\, \mathrm{d}x < 1.127$).

	\smallskip

	Inserting~\eqref{eq:lem:Efix:g bound} into~\eqref{eq:lem:Efix:column} gives, for $m \in \set{2, \, 3}$,
	\[
		\abs[\big]{(E^{\infty})_{(1, m), 0}}
		\leqslant 0.1295\, T_{1, m}
		\leqslant 0.1295 \cdot 0.2503
		\leqslant 3.25 \times 10^{-2}.
	\]
	Since only the $j = 0$ column of $E^{\infty}$ is non-zero, its operator norm is the Euclidean norm of that column, and
	\[
		\norm{E^{\infty}}_{\mathrm{op}}
		= \norm[\big]{(E^{\infty})_{\cdot, 0}}_{\ell^2}
		\leqslant \sqrt{2} \cdot 3.25 \times 10^{-2}
		\leqslant 4.6 \times 10^{-2}.
		\qedhere
	\]
\end{proof}

We are ready to prove Theorem~\ref{thm:standard stadium rigidity}.

\begin{proof}[Proof of Theorem~\ref{thm:standard stadium rigidity}]
	Let $\mathbf{n} = \mathbf{n}_1 \cup \mathbf{n}_2 \in C^r_*(S)$ satisfy $\mathcal{J}(\mathbf{n}) = 0$, and fix $a \geqslant a_K$; the final threshold $a_0$ is determined at the end of the proof.
	The proof reduces the kernel equation to a $2 \times 2$ system $\widetilde{A}\, \mathbf{w}^{\,\mathrm{low}} = 0$ and proves $\widetilde{A}$ non-singular by comparison with the limiting matrix $\widetilde{A}^{(0)} = \widetilde{A}^{\flat} + E^{\infty}$.

	By Lemma~\ref{lem:standard stadium circular orbits},
	\[
		\widehat{\mathbf{n}}_{1, j} = -\widehat{\mathbf{n}}_{2, j}
		\qquad \text{for all } j \in \n_0.
	\]
	Since $\mathbf{n} \in C^{4}_*(S) \subseteq X_{*, \gamma}$ (Remark~\ref{rem:operator well-defined on subspace}), the isospectral identity $\ell^{q, mq}(\mathbf{n}) = 0$ may be expanded termwise whenever the orbit $\Gamma^{q, mq}$ exists. Substituting $\widehat{\mathbf{n}}_{2, j} = -\widehat{\mathbf{n}}_{1, j}$, we obtain
	\begin{equation}\label{eq:thm:rigidity:basic equations}
		\sum_{j = 0}^{+\infty} \widehat{\mathbf{n}}_{1, j}\, \Diffl{j}{q}{m} = 0.
	\end{equation}
	For $q \geqslant q_0$, all three orbits $\Gamma^{q, mq}$ with $m \in \set{2, \, 3, \, 4}$ exist and are regular by Lemma~\ref{lem:tail invertibility large a}\,\ref{itm:tail invertibility orbits}; combining~\eqref{eq:thm:rigidity:basic equations} over $m$ with the weights $x_m^{(q)}$ of~\eqref{eq:def:lambda j q} gives
	\begin{equation}\label{eq:thm:rigidity:lambda system}
		\sum_{j=0}^{+\infty}\lambda_{j,q}\,\widehat{\mathbf{n}}_{1, j}=0
		\qquad (q \geqslant q_0).
	\end{equation}
	Write
	\[
		\mathbf{w}^{\,\mathrm{low}} \define (\widehat{\mathbf{n}}_{1, 0}, \, \widehat{\mathbf{n}}_{1, 1})^T,
		\qquad
		\mathbf{w}^{\,\mathrm{high}} \define (\widehat{\mathbf{n}}_{1, 2}, \, \widehat{\mathbf{n}}_{1, 3}, \, \ldots)^T
	\]
	for the \emph{low-frequency coefficients} and the \emph{high-frequency tail}.
	The inclusion $C^{4}_*(S) \subseteq X_{*, \gamma}$ also gives $j^{\gamma}\, \widehat{\mathbf{n}}_{1, j} \to 0$, that is, $\mathbf{w}^{\,\mathrm{high}} \in h_{\gamma, q_0}$.
	Dividing row $q$ of~\eqref{eq:thm:rigidity:lambda system} by $\lambda_{q, q}$ (non-zero by Lemma~\ref{lem:low mode estimates}\,\ref{itm:low mode diagonal}) and splitting the sum at $q_0$ and at the diagonal gives $\bigl(K_{\mathrm{low}}\, \mathbf{w}^{\,\mathrm{low}}\bigr)_q + \bigl((\mathrm{Id} + K)\,\mathbf{w}^{\,\mathrm{high}}\bigr)_q = 0$ for each $q \geqslant q_0$.
	Applying the inverse from~\eqref{eq:triviality:resolvent bounds} gives the \emph{tail relation}
	\begin{equation}\label{eq:thm:rigidity:tail relation}
		\mathbf{w}^{\,\mathrm{high}} = -(\mathrm{Id}+K)^{-1} K_{\mathrm{low}}\, \mathbf{w}^{\,\mathrm{low}} = -M\, \mathbf{w}^{\,\mathrm{low}}.
	\end{equation}
	In particular, the tail $\mathbf{w}^{\,\mathrm{high}}$ is determined by $\mathbf{w}^{\,\mathrm{low}}$, and it remains to show $\mathbf{w}^{\,\mathrm{low}} = 0$.

	The short orbits $\Gamma^{1, 2}$ and $\Gamma^{1, 3}$ exist and are regular by Lemma~\ref{lem:short orbit control}, since $a \geqslant a_K \geqslant 12$, and their collision-angle corrections satisfy $\abs{\varepsilon_i} \leqslant 3 c_3$; by~\eqref{eq:triviality:universal bound}, we have $\abs[\big]{\Diffl{q'}{1}{m}} = \mathcal{O}(1)$ uniformly in $q' \in \n_0$.
	For $(q, m) = (1, m)$ with $m \in \set{2, \, 3}$, splitting~\eqref{eq:thm:rigidity:basic equations} at $q_0$ and substituting the tail relation~\eqref{eq:thm:rigidity:tail relation} termwise gives
	\begin{equation}\label{eq:thm:rigidity:head system}
		\sum_{j=0}^{1} \widetilde{A}_{(1,m),j}\, \widehat{\mathbf{n}}_{1, j} = 0,
		\qquad
		m \in \set{2, \, 3},
	\end{equation}
	where
	\begin{equation}\label{eq:thm:rigidity:head matrix}
		\widetilde{A}_{(1,m),j} \define \Diffl{j}{1}{m} - \sum_{q'=q_0}^{+\infty} \Diffl{q'}{1}{m}\, M_{q', j}
		\qquad (j \in \set{0, \, 1}).
	\end{equation}
	The interchange of the finite $j$-sum with the $q'$-series is justified by absolute convergence, since $\abs[\big]{\Diffl{q'}{1}{m}} = \mathcal{O}(1)$ and $\abs{M_{q', j}} = \mathcal{O}(q'^{-\gamma})$ by~\eqref{eq:thm:rigidity:decay of inv K low}.
	Since $q_0 = 2$, the system~\eqref{eq:thm:rigidity:head system} consists of two equations in the two unknowns $\widehat{\mathbf{n}}_{1, 0}$, $\widehat{\mathbf{n}}_{1, 1}$, with square matrix $\widetilde{A}$.

	Replacing every functional in~\eqref{eq:thm:rigidity:head matrix} by its limiting value~\eqref{eq:thm:rigidity:leading order value} turns $\Diffl{j}{1}{m}$ into $\widetilde{A}^{\flat}_{(1,m),j}$ and the tail sum into $-(E^{\infty})_{(1,m),j}$, by the definitions~\eqref{eq:triviality:A flat} and~\eqref{eq:thm:rigidity:Efix def}.
	The limiting head matrix is therefore
	\begin{equation}\label{eq:thm:rigidity:A0 decomposition}
		\widetilde{A}^{(0)} \define \widetilde{A}^{\flat} + E^{\infty}.
	\end{equation}
	By~\eqref{eq:triviality:sigma flat}, Lemma~\ref{lem:admissible threshold}, and Weyl's singular-value perturbation inequality (cf.~\cite[Corollary~7.3.5]{Horn2013MatrixAnalysis}),
	\begin{equation}\label{eq:thm:rigidity:sigma0}
		\sigma_0 \define \sigma_{\min}\bigl(\widetilde{A}^{(0)}\bigr) \;\geqslant\; \sigma_{\min}\bigl(\widetilde{A}^{\flat}\bigr) - \norm{E^{\infty}}_{\mathrm{op}} \;\geqslant\; 5.5 \times 10^{-2} - 4.6 \times 10^{-2} \;\geqslant\; 9 \times 10^{-3} \;>\; 0.
	\end{equation}

	Set $\Delta \widetilde{A} \define \widetilde{A} - \widetilde{A}^{(0)}$.
	By~\eqref{eq:thm:rigidity:head matrix} and~\eqref{eq:thm:rigidity:A0 decomposition}, each entry decomposes as
	\begin{equation}\label{eq:thm:rigidity:perturbation decomposition}
		(\Delta \widetilde{A})_{(1,m),j}
		= \underbrace{ \Diffl{j}{1}{m} - \Diffl{j}{1}{m}\big|_{\varepsilon_i=0} }_{\text{collision-angle error}}
		\;-\; \underbrace{\sum_{q'=q_0}^{+\infty} \bigl( \Diffl{q'}{1}{m}\, M_{q',j} - \Diffl{q'}{1}{m}\big|_{\varepsilon_i=0}\, M^\infty_{q',j} \bigr)}_{\text{tail-correction variation}}.
	\end{equation}
	We show that every entry is $\mathcal{O}(c_3)$.

	\emph{Collision-angle error.}
	Here $j \in \set{0, \, 1}$ and $p_i \in \set{1, \, m}$ are bounded, the coefficients $D_1^{j, p_i}$ of Lemma~\ref{lem:Taylor expansion of auxiliary variable} are bounded by an absolute constant, as is $\bigl(\Phi^{j, p_i}\bigr)''$, and $\abs{\varepsilon_i} \leqslant 3 c_3$; the expansion~\eqref{eq:lem:low mode:taylor} gives
	\[
		\abs[\Big]{ \Diffl{j}{1}{m} - \Diffl{j}{1}{m}\big|_{\varepsilon_i=0} } = \mathcal{O}(c_3).
	\]

	\emph{Tail-correction variation.}
	The sum in~\eqref{eq:thm:rigidity:perturbation decomposition} splits as $\mathrm{(I)} + \mathrm{(II)}$, where
	\[
		\mathrm{(I)} \define \sum_{q' \geqslant q_0} \Diffl{q'}{1}{m}\,\bigl( M_{q',j} - M^{\infty}_{q',j} \bigr),
		\qquad
		\mathrm{(II)} \define \sum_{q' \geqslant q_0} \bigl( \Diffl{q'}{1}{m} - \Diffl{q'}{1}{m}\big|_{\varepsilon_i=0} \bigr)\, M^{\infty}_{q',j}.
	\]

	\emph{Bound on $\mathrm{(I)}$.}
	Write $\Delta K \define K - K^{\infty}$ and $\Delta K_{\mathrm{low}} \define K_{\mathrm{low}} - K^{\infty}_{\mathrm{low}}$.
	Lemma~\ref{lem:tail invertibility large a}\,\ref{itm:tail invertibility comparison} gives $\matrixnorm{\Delta K}[\gamma, q_0] \leqslant C_K\, c_3$.
	For $\Delta K_{\mathrm{low}}$, combining the algebraic identity
	\[
		(\Delta K_{\mathrm{low}})_{q', j}
		= \frac{\lambda_{j, q'} - \lambda^{\infty}_{j, q'}}{\lambda_{q', q'}}
		+ \lambda^{\infty}_{j, q'}\, \frac{\lambda^{\infty}_{q', q'} - \lambda_{q', q'}}{\lambda_{q', q'}\, \lambda^{\infty}_{q', q'}}
	\]
	with the column bounds of Lemma~\ref{lem:low mode estimates}\,\ref{itm:low mode one}--\ref{itm:low mode zero} and the diagonal bounds of Lemma~\ref{lem:low mode estimates}\,\ref{itm:low mode diagonal} and~\eqref{eq:triviality:frozen diagonal}, we obtain
	\begin{equation}\label{eq:thm:rigidity:delta K bounds}
		\abs[\big]{(\Delta K_{\mathrm{low}})_{q', j}} = \mathcal{O}\bigl(c_3\, q'^{-4}\bigr),
		\qquad \text{hence} \qquad
		\matrixnorm{(\Delta K_{\mathrm{low}})_{\cdot, j}}[\gamma, q_0] = \mathcal{O}(c_3)
		\qquad (j \in \set{0, \, 1}).
	\end{equation}
	Combining the resolvent identity
	\[
		M - M^{\infty}
		=
		-(\mathrm{Id}+K)^{-1}\,\Delta K\,(\mathrm{Id}+K^{\infty})^{-1} K_{\mathrm{low}}
		\;+\;
		(\mathrm{Id}+K^{\infty})^{-1}\,\Delta K_{\mathrm{low}},
	\]
	column-wise with the resolvent bounds~\eqref{eq:triviality:resolvent bounds}, the column norms~\eqref{eq:thm:rigidity:Klow column norm}, and~\eqref{eq:thm:rigidity:delta K bounds}, we obtain
	\[
		\matrixnorm{(M - M^{\infty})_{\cdot,j}}[\gamma, q_0]
		= \mathcal{O}(c_3),
		\qquad\text{hence}\qquad
		\abs[\big]{M_{q',j} - M^{\infty}_{q',j}} = \mathcal{O}\bigl(c_3\, q'^{-\gamma}\bigr).
	\]
	Since $\abs[\big]{\Diffl{q'}{1}{m}} = \mathcal{O}(1)$,
	\[
		\abs{\mathrm{(I)}}
		= \mathcal{O}(c_3) \sum_{q' \geqslant q_0} q'^{-\gamma}
		= \mathcal{O}(c_3).
	\]

	\emph{Bound on $\mathrm{(II)}$.}
	By~\eqref{eq:thm:rigidity:Mfix column decay}, we have $M^{\infty}_{q', 1} = 0$, so $\mathrm{(II)} = 0$ for $j = 1$.
	For $j = 0$ we claim the linear bound
	\begin{equation}\label{eq:thm:rigidity:II linear bound}
		\abs[\big]{\Diffl{q'}{1}{m} - \Diffl{q'}{1}{m}\big|_{\varepsilon_i=0}}
		\leqslant
		C(q_0)\, c_3\, q'
		\qquad (q' \geqslant q_0).
	\end{equation}
	Consider the first-order coefficients at $j = q'$, where $p_i \in \set{1, \, m}$.
	If $p_i \mid q'$ (in particular whenever $p_i = 1$) then $\abs[\big]{D_1^{q', p_i}} = p_i \cos\frac{\pi}{2 p_i} \leqslant 3$.
	If $p_i \nmid q'$, which forces $p_i = m \geqslant 2$, then, by the double-angle identity as in~\eqref{eq:lem:low mode:D1 at one} and $\abs[\big]{\sin\frac{q'\pi}{p_i}} \geqslant \sin\frac{\pi}{p_i}$,
	\[
		\abs[\big]{D_1^{q', p_i}}
		= p_i\, q'\, \frac{2 \sin\frac{\pi}{2 p_i}}{\abs[\big]{\sin\frac{q' \pi}{p_i}}}
		\leqslant \frac{p_i\, q'}{\cos\frac{\pi}{2 p_i}}
		\leqslant 2\sqrt{3}\, q'.
	\]
	In either case $\abs[\big]{D_1^{q', p_i}} \leqslant 2\sqrt{3}\, q'$, while $\bigl(\Phi^{q', p_i}\bigr)'' = \mathcal{O}(q'^2)$ and $\abs{\varepsilon_i} \leqslant 3 c_3$.
	For $q' \leqslant 1/c_3$, the expansion~\eqref{eq:lem:low mode:taylor} gives
	\[
		\abs[\big]{\Diffl{q'}{1}{m} - \Diffl{q'}{1}{m}\big|_{\varepsilon_i=0}}
		\leqslant C(q_0)\,\bigl( c_3\, q' + c_3^2\, q'^2 \bigr)
		\leqslant 2\, C(q_0)\, c_3\, q',
	\]
	using $c_3\, q' \leqslant 1$; for $q' > 1/c_3$, the universal bound~\eqref{eq:triviality:universal bound} gives $\abs[\big]{\Diffl{q'}{1}{m} - \Diffl{q'}{1}{m}\big|_{\varepsilon_i=0}} \leqslant C(q_0) \leqslant C(q_0)\, c_3\, q'$.
	This proves~\eqref{eq:thm:rigidity:II linear bound}.
	Combining this with the column decay~\eqref{eq:thm:rigidity:Mfix column decay}, we obtain, for $j = 0$,
	\[
		\abs{\mathrm{(II)}}
		= \mathcal{O}(c_3) \sum_{q' \geqslant q_0} q'^{\,1-\gamma}
		= \mathcal{O}(c_3),
	\]
	where the series converges since $\gamma = \tfrac{7}{2} > 2$.

	Combining the three estimates, we conclude that every entry of $\Delta \widetilde{A}$ is $\mathcal{O}(c_3)$; since $\widetilde{A}$ is a $2 \times 2$ matrix,
	\[
		\norm{\Delta \widetilde{A}}_{\mathrm{op}} \leqslant C_1\, c_3
	\]
	for a constant $C_1$ depending only on $q_0$.
	Choose
	\[
		a_0 \define
		\max \set[\bigg]{
			a_K, \,
			\frac{\pi^2\, C_1}{4\, \sigma_0}
		}.
	\]
	For each $a \geqslant a_0$,
	\[
		\norm{\Delta \widetilde{A}}_{\mathrm{op}}
		\leqslant C_1\, c_3
		= \frac{\pi^2\, C_1}{8\, a}
		\leqslant \frac{\sigma_0}{2},
	\]
	and Weyl's inequality applied to $\widetilde{A} = \widetilde{A}^{(0)} + \Delta \widetilde{A}$ gives
	\[
		\sigma_{\min}\bigl(\widetilde{A}\bigr) \;\geqslant\; \sigma_0 - \norm{\Delta \widetilde{A}}_{\mathrm{op}} \;\geqslant\; \frac{\sigma_0}{2} \;>\; 0.
	\]
	Hence $\widetilde{A}$ is non-singular, and the head system~\eqref{eq:thm:rigidity:head system} forces $\mathbf{w}^{\,\mathrm{low}} = 0$; the tail relation~\eqref{eq:thm:rigidity:tail relation} then gives $\mathbf{w}^{\,\mathrm{high}} = 0$.
	Thus every Fourier coefficient of $\mathbf{n}_1$, and hence of $\mathbf{n}_2$, vanishes; since the Fourier series of $\mathbf{n}_i$ converges uniformly to $\mathbf{n}_i$ (Remark~\ref{rem:operator well-defined on subspace}), we conclude $\mathbf{n}_1 \equiv \mathbf{n}_2 \equiv 0$.
	The proof is complete.
\end{proof}

Theorem~\ref{thm:main rigidity} now follows.

\begin{proof}[Proof of Theorem~\ref{thm:main rigidity}]
	Take $a_{0}$ as in Theorem~\ref{thm:standard stadium rigidity}, and let $\set{\Omega_{\tau}}_{\abs{\tau} \leqslant 1}$ be a dynamically isospectral $C^{1}$ family in $\mathcal{M}^{r}$ with $\Omega_{0} = \Omega_{*}$.
	By Lemma~\ref{lem:isospectral deformation in kernel}, its deformation function lies in $\ker \mathcal{J}$, which is trivial by Theorem~\ref{thm:standard stadium rigidity}.
\end{proof}

%% file: section/Acknowledgements.tex
\section*{Acknowledgements}

The author is deeply grateful to Vadim Kaloshin for suggesting this problem, and for his insightful perspectives and invaluable advice throughout this research. 
He expresses his sincere gratitude to Jianyu Chen for illuminating discussions and generous guidance during the early stages of this work, and for his warm hospitality during the author's visit to Soochow University.

The author would like to thank his Ph.D. advisor, Zhiqiang Li, for his support in the preliminary stage of the project, and Boxi Liu for helpful discussions on the preliminary notes.

The author is especially indebted to Otto Vaughn Osterman for numerous insightful discussions and valuable suggestions on the manuscript. 

Part of this work was carried out while the author was visiting the Institute of Science and Technology Austria (ISTA); the author is grateful to the institute for its warm hospitality and excellent research environment.

%% file: ref.bib
@article{MR3665005,
  title={Dynamical spectral rigidity among {$\mathbb{Z}_2$}-symmetric strictly convex domains close to a circle},
  author={De Simoi, Jacopo and Kaloshin, Vadim and Wei, Qiaoling},
  addendum={Appendix B coauthored with H. Hezari},
  journal={Annals of Mathematics},
  shortjournal={{Ann. of Math. (2)}},
  volume={186},
  number={1},
  pages={277--314},
  year={2017},
  doi={10.4007/annals.2017.186.1.7},
  url={https://doi.org/10.4007/annals.2017.186.1.7}
}

@article{ChenKaloshinZhang2023,
  title={Length spectrum rigidity for piecewise analytic {B}unimovich billiards},
  author={Chen, Jianyu and Kaloshin, Vadim and Zhang, Hong-Kun},
  journal={Communications in Mathematical Physics},
  shortjournal={{Comm. Math. Phys.}},
  volume={404},
  number={1},
  pages={1--50},
  year={2023}
}

@article{Kac_1966,
  title={Can one hear the shape of a drum?},
  author={Kac, Mark},
  journal={The American Mathematical Monthly},
  shortjournal={{Amer. Math. Monthly}},
  volume={73},
  number={4},
  pages={1--23},
  year={1966},
  doi={10.1080/00029890.1966.11970915},
  url={https://doi.org/10.1080/00029890.1966.11970915}
}

@article{Bunimovich_1979,
  title={On the ergodic properties of nowhere dispersing billiards},
  author={Bunimovich, Leonid A.},
  journal={Communications in Mathematical Physics},
  shortjournal={{Comm. Math. Phys.}},
  volume={65},
  number={3},
  pages={295--312},
  year={1979},
  doi={10.1007/BF01197884},
  url={https://doi.org/10.1007/BF01197884}
}

@article{Lazutkin_1973,
  title={The existence of caustics for a billiard problem in a convex domain},
  author={Lazutkin, Vladimir F.},
  journal={Mathematics of the USSR-Izvestiya},
  shortjournal={{Math. USSR-Izv.}},
  volume={7},
  number={1},
  pages={185--214},
  year={1973},
  doi={10.1070/IM1973v007n01ABEH001932},
  url={https://doi.org/10.1070/IM1973v007n01ABEH001932}
}

@article{AnderssonMelrose1977,
  title={The propagation of singularities along gliding rays},
  author={Andersson, Karl Gustav and Melrose, Richard B.},
  journal={Inventiones Mathematicae},
  shortjournal={{Invent. Math.}},
  volume={41},
  number={3},
  pages={197--232},
  year={1977}
}

@book{PetkovStoyanov1992,
  title={Geometry of Reflecting Rays and Inverse Spectral Problems},
  author={Petkov, Vesselin M. and Stoyanov, Luchezar N.},
  series={Pure and Applied Mathematics (New York)},
  publisher={John Wiley \& Sons, Ltd.},
  address={Chichester},
  year={1992}
}

@article{GordonWebbWolpert1992,
  title={One cannot hear the shape of a drum},
  author={Gordon, Carolyn and Webb, David L. and Wolpert, Scott},
  journal={Bulletin of the American Mathematical Society. New Series},
  shortjournal={{Bull. Amer. Math. Soc. (N.S.)}},
  volume={27},
  number={1},
  pages={134--138},
  year={1992}
}

@article{HezariZelditch2012,
  title={{$C^{\infty}$} spectral rigidity of the ellipse},
  author={Hezari, Hamid and Zelditch, Steve},
  journal={Analysis \& PDE},
  shortjournal={{Anal. PDE}},
  volume={5},
  number={5},
  pages={1105--1132},
  year={2012}
}

@article{HezariZelditch2022,
  title={One can hear the shape of ellipses of small eccentricity},
  author={Hezari, Hamid and Zelditch, Steve},
  journal={Annals of Mathematics},
  shortjournal={{Ann. of Math. (2)}},
  volume={196},
  number={3},
  pages={1083--1134},
  year={2022}
}

@article{Zelditch2014survey,
  title={Survey on the inverse spectral problem},
  author={Zelditch, Steve},
  journal={ICCM Notices},
  shortjournal={{ICCM Not.}},
  volume={2},
  number={2},
  pages={1--20},
  year={2014}
}

@article{AvilaDeSimoiKaloshin2016,
  title={An integrable deformation of an ellipse of small eccentricity is an ellipse},
  author={Avila, Artur and De Simoi, Jacopo and Kaloshin, Vadim},
  journal={Annals of Mathematics},
  shortjournal={{Ann. of Math. (2)}},
  volume={184},
  number={2},
  pages={527--558},
  year={2016}
}

@article{KaloshinSorrentino2022survey,
  title={Inverse problems and rigidity questions in billiard dynamics},
  author={Kaloshin, Vadim and Sorrentino, Alfonso},
  journal={Ergodic Theory and Dynamical Systems},
  shortjournal={{Ergodic Theory Dynam. Systems}},
  volume={42},
  number={3},
  pages={1023--1056},
  year={2022}
}

@article{KaloshinSorrentino2018,
  title={On the local {B}irkhoff conjecture for convex billiards},
  author={Kaloshin, Vadim and Sorrentino, Alfonso},
  journal={Annals of Mathematics},
  shortjournal={{Ann. of Math. (2)}},
  volume={188},
  number={1},
  pages={315--380},
  year={2018}
}

@article{HuangKaloshinSorrentino2018,
  title={Nearly circular domains which are integrable close to the boundary are ellipses},
  author={Huang, Guan and Kaloshin, Vadim and Sorrentino, Alfonso},
  journal={Geometric and Functional Analysis},
  shortjournal={{Geom. Funct. Anal.}},
  volume={28},
  number={2},
  pages={334--392},
  year={2018}
}

@article{Koval2026,
  title={Local strong {B}irkhoff conjecture and local spectral uniqueness of almost every ellipse},
  author={Koval, Illya},
  journal={Inventiones Mathematicae},
  shortjournal={{Invent. Math.}},
  volume={244},
  number={1},
  pages={221--298},
  year={2026}
}

@article{HuangKaloshin2019,
  title={On the finite dimensionality of integrable deformations of strictly convex integrable billiard tables},
  author={Huang, Guan and Kaloshin, Vadim},
  journal={Moscow Mathematical Journal},
  shortjournal={{Mosc. Math. J.}},
  volume={19},
  number={2},
  pages={307--327},
  year={2019}
}

@misc{FierobeKaloshinSorrentino2025,
  title={A billiard table close to an ellipse is deformationally spectrally rigid among dihedrally symmetric domains},
  author={Fierobe, Corentin and Kaloshin, Vadim and Sorrentino, Alfonso},
  year={2025},
  eprint={2511.20062},
  eprinttype={arxiv},
  eprintclass={math.DS}
}

@article{DeSimoiKaloshinLeguil2023,
  title={Marked length spectral determination of analytic chaotic billiards with axial symmetries},
  author={De Simoi, Jacopo and Kaloshin, Vadim and Leguil, Martin},
  journal={Inventiones Mathematicae},
  shortjournal={{Invent. Math.}},
  volume={233},
  number={2},
  pages={829--901},
  year={2023}
}

@article{GuilleminKazhdan1980,
  title={Some inverse spectral results for negatively curved 2-manifolds},
  author={Guillemin, Victor and Kazhdan, David},
  journal={Topology},
  shortjournal={{Topology}},
  volume={19},
  number={3},
  pages={301--312},
  year={1980}
}

@article{GuillarmouLefeuvre2019,
  title={The marked length spectrum of {A}nosov manifolds},
  author={Guillarmou, Colin and Lefeuvre, Thibault},
  journal={Annals of Mathematics},
  shortjournal={{Ann. of Math. (2)}},
  volume={190},
  number={1},
  pages={321--344},
  year={2019}
}

@book{Horn2013MatrixAnalysis,
  title={Matrix Analysis},
  author={Horn, Roger A. and Johnson, Charles R.},
  edition={2nd},
  publisher={Cambridge University Press},
  location={New York},
  year={2013}
}

@misc{KovalVig2024,
  title={Silent orbits and cancellations in the wave trace},
  author={Koval, Illya and Vig, Amir},
  year={2024},
  eprint={2408.09238},
  eprinttype={arxiv},
  eprintclass={math.SP}
}

@article{Osterman2023,
  title={On length spectrum rigidity of dispersing billiard systems},
  author={Osterman, Otto Vaughn},
  journal={Journal of Modern Dynamics},
  shortjournal={{J. Mod. Dyn.}},
  volume={19},
  pages={847--878},
  year={2023},
  doi={10.3934/jmd.2023025},
  url={https://doi.org/10.3934/jmd.2023025}
}
